\newtheorem{theorem}{Theorem}[section]
\newtheorem{definition}[theorem]{Definition}
\newtheorem{lemma}[theorem]{Lemma}
\newtheorem{claim}[theorem]{Claim}
\newtheorem{problem}[theorem]{Problem}
\newtheorem{corollary}[theorem]{Corollary}
\newtheorem{conjecture}[theorem]{Conjecture}
\newcommand{\cCycles}{43000}
\begin{document}

\title{Partitioning a graph into a cycle and a sparse graph}

\author{\large{Alexey Pokrovskiy}\footnote{Email: 
\texttt{DrAlexeyPokrovskiy@gmail.com}} 
\\
\\ ETH Z\"urich,\\
Z\"urich, Switzerland
\\ 
\\ \small Keywords: Partitioning graphs, Ramsey theory, cycles.}

\maketitle

\begin{abstract}
In this paper we investigate results of the form ``every graph $G$ has a cycle $C$ such that the induced subgraph of $G$ on $V(G)\setminus V(C)$ has small maximum degree.'' Such results haven't been studied before, but are motivated by the  Bessy and Thomass\'e Theorem which states that the vertices of any graph $G$ can be covered by a cycle $C_1$ in $G$ and disjoint cycle $C_2$ in the complement of $G$.

There are two main theorems in this paper. The first is that every graph has a cycle with  $\Delta(G[V(G)\setminus V(C)])\leq \frac12(|V(G)\setminus V(C)|-1)$. The bound on the maximum degree $\Delta(G[V(G)\setminus V(C)])$ is best possible. The second theorem is that  every $k$-connected graph $G$  has a cycle with  $\Delta(G[V(G)\setminus V(C)])\leq \frac1{k+1}|V(G)\setminus V(C)|+3$.
We also give an application of this second theorem to a conjecture about partitioning edge-coloured complete graphs into monochromatic cycles.
%We also give an application of this second theorem to proving an approximate version of the Erd\H{o}s, Gy\'arf\'as, Pyber Conjecture.
\end{abstract}

\section{Introduction}\label{SectionIntroduction}
The goal of this paper is to investigate  results of the following form.
%\begin{quote}\tag{*}
%Every graph $G$ has a cycle $C$ such that the induced subgraph of $G$ on $V(G)\setminus V(C)$ has small maximum degree.
%\end{quote}
\begin{equation}\label{MainQuestion}\tag{$\dagger$}
\parbox[b]{0.9\textwidth}{``Every graph $G$ has a cycle $C$ such that the induced subgraph of $G$ on $V(G)\setminus V(C)$ has small maximum degree.''}
\end{equation}

In other words, does every graph have a cycle $C$, such that the subgraph of $G$ outside $C$ is sparse?
For convenience, throughout the paper we will use ``$G\setminus C$'' to mean the induced subgraph of $G$ with vertex set $V(G)\setminus V(C)$. Thus $\Delta(G\setminus C)=\Delta(G[V(G)\setminus V(C)])$, is maximum degree of the induced subgraph of $G$ outside $C$ and $|G\setminus C|=|G|-|C|$ is the number of vertices $G$ has outside $C$.

To the author's knowledge, nothing like (\ref{MainQuestion}) has been considered before. 
Why would anyone care about such things?
Perhaps the main reason is that the theorems we prove look very natural. 
Therefore,  results like (\ref{MainQuestion}) seem to be an intriguing new direction for the neverending quest of understanding cycles in graphs.
However, if people want applications, we also got applications. 
In Section~\ref{SectionConclusion} we will use something like~(\ref{MainQuestion}) to make progress on a conjecture of  Erd\H{o}s, Gy\'arf\'as, and Pyber. Our original motivation for studying~(\ref{MainQuestion}) came from applying it to following conjecture of Lehel. Recall that for a graph $G$, the complement  of $G$, denoted $\overline G$, is the graph on $V(G)$ whose edges are exactly the nonedges of $G$.
\begin{conjecture}[Lehel]
The vertices of every graph $G$ can be covered by a cycle $C_1$ in $G$ and a vertex-disjoint cycle $C_2$ in the complement of $G$.
\end{conjecture}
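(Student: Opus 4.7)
The plan is to deduce Lehel's conjecture from the first main theorem of the paper. Applying that theorem to $G$ produces a cycle $C_1 \subseteq G$ with $\Delta(G\setminus C_1) \leq \tfrac{1}{2}(|G\setminus C_1|-1)$. Writing $m = |G\setminus C_1|$ and passing to the complement gives
\[
\delta\bigl(\overline{G}[V(G)\setminus V(C_1)]\bigr) \;\geq\; (m-1) - \bigl\lfloor \tfrac{m-1}{2}\bigr\rfloor,
\]
and the goal is then to extract a Hamiltonian cycle $C_2$ of $\overline{G}[V(G)\setminus V(C_1)]$ via Dirac's theorem. Together with $C_1$, this cycle covers $V(G)$ and verifies the conjecture.

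When $m$ is even, the integrality of $\Delta$ actually forces $\Delta(G\setminus C_1) \leq m/2 - 1$, so $\delta \geq m/2$ in the complement and Dirac's theorem directly delivers $C_2$. Tiny values $m \leq 2$ are absorbed by the standard convention that permits degenerate ``cycles'' consisting of a single vertex or edge. When $m$ is odd, however, we only obtain $\delta \geq (m-1)/2$ in the complement, exactly one below Dirac's threshold; classical degree conditions then yield at best a Hamiltonian \emph{path} $P = u_1u_2\cdots u_m$ in $\overline{G}[V(G)\setminus V(C_1)]$, not a Hamiltonian cycle. Closing this parity gap is the main obstacle.

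To handle the odd case I would shift one vertex between $C_1$ and its complement. If some $u \in V(G) \setminus V(C_1)$ has two consecutive $G$-neighbours on $C_1$, then $u$ can be inserted into $C_1$ to yield a new cycle $C_1'$ with $|G\setminus C_1'| = m-1$ even, reducing to the case already handled. Otherwise, $V(G) \setminus V(C_1)$ is only weakly attached to $C_1$ in $G$, hence strongly attached in $\overline{G}$, and I would instead run a rotation--extension argument inside $\overline{G}[V(G)\setminus V(C_1)]$, starting from $P$ and, if necessary, swapping an endpoint of $P$ with a vertex of $C_1$, to close $P$ into a Hamiltonian cycle $C_2$. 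The delicate step is proving that at least one of these two moves is always available; this may require a mildly sharpened form of Theorem~1 which guarantees that the extremal configurations for the $\Delta$-bound can always be perturbed so that the residual size $m$ is even. With that in hand, the rest of the argument is a short application of Dirac's theorem in the complement.
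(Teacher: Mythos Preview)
The paper does not give a self-contained proof of Lehel's conjecture: it records the statement, attributes the result to Bessy and Thomass\'e, and in the concluding remarks only \emph{sketches} an alternative route via Theorem~\ref{CycleMinDegree}, with full details deferred to a separate manuscript. So there is no complete in-paper proof to match; one can only compare your outline to that sketch.

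Your treatment of even $m$ is fine, and your diagnosis of the odd-$m$ obstruction is correct. But both of your proposed fixes for odd $m$ contain real gaps. For the first move, inserting some $u$ into $C_1$ yields a new cycle $C_1'$ with $m'=m-1$ even, yet the bound you inherit is only
\[
\Delta(G\setminus C_1')\;\le\;\Delta(G\setminus C_1)\;\le\;\tfrac{m-1}{2}\;=\;\tfrac{m'}{2},
\]
not $\Delta\le m'/2-1$; your integrality trick no longer bites, and Dirac is still one short in the complement. So this does not reduce to the case already handled. For the second move, the rotation--extension plus vertex-swap idea is only a sketch, and you yourself concede it may require a sharpened form of Theorem~\ref{CycleMinDegree} that the paper does not supply. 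The paper explicitly warns (just after Figure~\ref{FigureGeneralGraphExtremal}) that Theorem~\ref{CycleMinDegree} ``isn't quite strong enough to directly imply Lehel's Conjecture,'' so this gap is acknowledged there as well.

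The paper's own route around the obstruction is different from yours. Rather than trying to rescue Dirac by parity or rotation arguments, it invokes the Nash-Williams theorem to classify all non-Hamiltonian graphs with $\delta\ge\tfrac12(|G|-1)$ (Corollary~\ref{stabledirac}). This leaves two explicit extremal structures for $\overline{H}$, each of which is then handled by separate ad hoc arguments; the paper says these ``need quite a bit of extra work'' and points to~\cite{PokrovskiyAlternativeBessyThomasse} for the details. In short, the paper trades your parity/rotation approach for a structure theorem plus casework---neither is short, but the latter is at least known to close.
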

In this conjecture, a single edge, a single vertex, and the empty set are all considered to be cycles. This is to avoid some trivial counterexamples.  This convention will be used for the rest of this paper.

This conjecture attracted a lot of attention in the '90s and early '00s. The conjecture first appeared in Ayel's PhD thesis~\cite{Ayel} where it was proved for some special types of colourings of $K_n$.  Gerencs\'er and Gy\'arf\'as~\cite{Gerencser} showed that the  conjecture is true if $C_1$ and $C_2$ are required to be paths rather than cycles.
Gy\'arf\'as~\cite{Gyarfas2} showed that the conjecture is true if $C_1$ and $C_2$ are allowed to intersect in one vertex.
\L{uczak}, R\"odl, and Szemer\'edi~\cite{Szemeredi2} showed that the conjecture holds for sufficiently large graphs.   Later, Allen~\cite{Allen} gave an alternative proof that works for smaller (but still large) graphs.
Lehel's Conjecture was finally shown to be true for all graphs by Bessy and Thomass\'e~\cite{Thomasse}.
\begin{theorem}[Bessy and Thomass\'e, \cite{Thomasse}]\label{TheoremBessyThomasse}
The vertices of every graph $G$ can be covered by a cycle $C_1$ in $G$ and a vertex-disjoint cycle $C_2$ in the complement of $G$.
\end{theorem}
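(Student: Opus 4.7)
The plan is to derive Theorem~\ref{TheoremBessyThomasse} as a corollary of the first main theorem announced in the abstract, namely that every graph $G$ admits a cycle $C$ with $\Delta(G\setminus C)\le \tfrac12(|G\setminus C|-1)$. The strategy is a two-step reduction: first use the main theorem to pick $C_1$ so that the vertices outside $C_1$ induce a sparse subgraph of $G$, and then Hamilton-cycle the complement of that sparse subgraph to produce $C_2$.

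First I would apply the main theorem to $G$ to obtain a cycle $C_1\subseteq G$ with $\Delta(G\setminus C_1)\le \tfrac12(|G\setminus C_1|-1)$. Let $H=\overline{G}[V(G)\setminus V(C_1)]$ and $n=|G\setminus C_1|$. Since degrees are integers, the bound rearranges to $\delta(H)\ge \lceil(n-1)/2\rceil$. If $n\le 2$ we take $C_2$ to be the empty set, a single vertex, or a single edge (all permitted by the paper's convention) and we are done. If $n$ is even we get $\delta(H)\ge n/2$, so Dirac's theorem immediately hands us a Hamilton cycle of $H$, which serves as $C_2$.

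The main obstacle is the case $n$ odd, where $\delta(H)\ge(n-1)/2$ is exactly one short of Dirac's threshold and $H$ need not be Hamiltonian in general (for example, two cliques glued at a single vertex satisfy this degree bound but contain a cut vertex). To overcome this I would perturb $C_1$ in order to flip the parity of $n$: absorb some $v\in V(G)\setminus V(C_1)$ into $C_1$ whenever $v$ has two consecutive $C_1$-neighbours in $G$ (which shrinks $n$ by one), or, failing that, expel a vertex of $C_1$ into $V(G)\setminus V(C_1)$ by short-cutting past it (which grows $n$ by one). One then needs to verify that some such parity-changing move is always available while preserving the sparsity bound $\Delta(G\setminus C_1)\le \tfrac12(|G\setminus C_1|-1)$, and to show that the residual configurations in which no such move exists force $G$ to have a rigid clique-like structure for which $C_1$ and $C_2$ can be exhibited directly. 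This parity-and-extremal-case analysis is the delicate technical step; everything else is a clean reduction to the first main theorem of the paper combined with Dirac's theorem.
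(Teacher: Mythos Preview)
This theorem is not proved in the paper: it is quoted as the result of Bessy and Thomass\'e~\cite{Thomasse}. The paper does, however, sketch an \emph{alternative} proof in Section~\ref{SectionConclusion}, and it is worth comparing your plan to that sketch.

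Your first two steps coincide with the paper's outline: apply Theorem~\ref{CycleMinDegree} to get a cycle $C_1$ with $\Delta(G\setminus C_1)\le\tfrac12(|G\setminus C_1|-1)$, pass to the complement $H=\overline{G\setminus C_1}$, and note $\delta(H)\ge\tfrac12(|H|-1)$. When $|H|$ is even this is the Dirac threshold and you are done. The divergence is in how you handle the shortfall when $|H|$ is odd. You propose to perturb $C_1$ to flip the parity of $|H|$, either by absorbing an outside vertex with two consecutive neighbours on $C_1$, or by short-cutting past a vertex of $C_1$. This is where your argument has a genuine gap: neither move is guaranteed to exist, and more importantly neither is guaranteed to \emph{preserve} the bound $\Delta(G\setminus C_1)\le\tfrac12(|G\setminus C_1|-1)$. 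Expelling a vertex $c$ from $C_1$ can raise $\Delta(G\setminus C_1)$ arbitrarily (take $c$ adjacent to everything outside), and there is no reason an absorbable vertex should exist. You acknowledge that the residual cases need a structural analysis, but you give no indication of what that structure is or why it is tractable; as written this is a hope rather than a proof.

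The paper's route avoids parity games entirely. Instead of trying to reach the Dirac threshold, it classifies \emph{all} non-Hamiltonian graphs with $\delta\ge\tfrac12(|G|-1)$ via the Nash--Williams theorem (Theorem~\ref{NashWilliams}), obtaining exactly two rigid structures (Corollary~\ref{stabledirac}): either $\overline H$ is a balanced complete bipartite graph plus one extra vertex in the larger side, or $\overline H$ is two equal cliques glued at a cut vertex. These two extremal families are then handled by a separate case analysis which the paper defers to~\cite{PokrovskiyAlternativeBessyThomasse}, noting that ``quite a bit of extra work'' is needed. So even the paper's own sketch is incomplete here; but its reduction is to two concrete, explicitly described configurations, whereas your reduction is to an unspecified set of graphs where certain local moves fail.
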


How is Lehel's Conjecture related to~(\ref{MainQuestion})? If we could prove that ``every graph $G$ has a cycle with $\Delta(G\setminus C)\leq \frac{1}{2}|G\setminus C|-1$,'' then we would have a strengthening of Lehel's Conjecture. To see this notice that for a graph $H$, $\Delta(H)\leq \frac{1}{2}|H|-1$ is equivalent to $\delta(\overline H)\geq \frac{1}{2}|\overline H|$. Thus Dirac's Theorem implies that for any $H$ with $\Delta(H)\leq \frac{1}{2}|H|-1$, the complement of $H$ is Hamiltonian. So if it \emph{were} true that every graph $G$ had a cycle  with $\Delta(G\setminus C)\leq \frac{1}{2}|G\setminus C|-1$, then by Dirac's Theorem, we would have a cycle in $\overline G$ covering $G\setminus C$, and so an alternative proof of Lehel's Conjecture.

\begin{figure}
  \centering
    \includegraphics[width=0.40\textwidth]{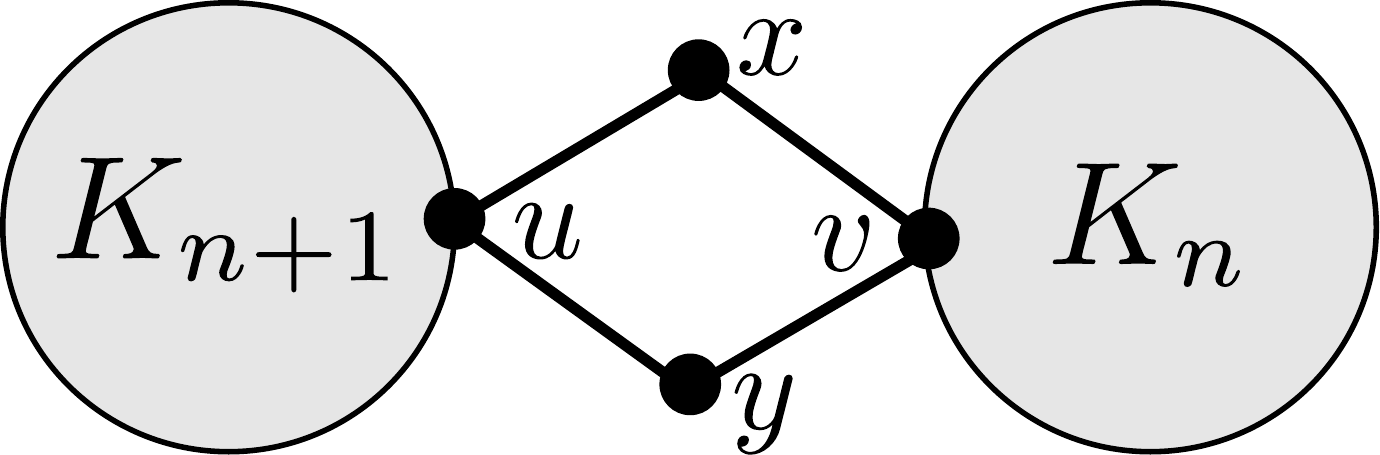}
  \caption{A graph with $2n+3$ vertices in which every cycle has $\Delta(G\setminus C)\geq \frac{1}{2}(|G\setminus C|-1)$.  To see this note that if $C$ does not pass through $K_{n+1}$, we have  $u\in A$ and $d_{G\setminus C}(u)\geq\frac{1}{2}|G\setminus C|$.  If $C$ passes through $K_{n+1}$ and $v\not\in C$ then we have $d_{G\setminus C}(v)\geq\frac{1}{2}|{G\setminus C}|$.  The only remaining case is that $C$ is the 4-cycle passing through both $u$ and $v$, in which case any vertex in $K_{n+1}-u$ will have degree $\frac{1}{2}(|G\setminus C|+1)$ in $G\setminus C$.}\label{FigureGeneralGraphExtremal}
\end{figure}

Unfortunately this strengthening of Lehel's Conjecture simply isn't true. See Figure~\ref{FigureGeneralGraphExtremal} for a graph $G$ which has  $\Delta(G\setminus C)\geq \frac{1}{2}(|G\setminus C|-1)$ for every cycle $C$. But if the maximum degree condition is weakened slightly, then we can prove it.
\begin{theorem} \label{CycleMinDegree}
Every graph $G$ has a cycle $C$ with wither $|G\setminus C|=0$ or
\begin{equation}\label{EqCycleMinDegree}
\Delta(G\setminus C)\leq \frac{1}{2}(|G\setminus C|-1).
\end{equation}
\end{theorem}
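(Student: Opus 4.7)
My plan is an extremal argument. Choose $C$ to minimize $\Delta(G\setminus C)$ among all cycles of $G$, and subject to this, to have $|V(C)|$ as large as possible. (Since the empty set qualifies as a cycle, the minimum exists.) Write $U = V(G)\setminus V(C)$ and $d = \Delta(G\setminus C)$, and suppose for contradiction that $|U|>0$ and $d>\tfrac{1}{2}(|U|-1)$. Since $d$ is an integer, this forces $d\geq\lceil|U|/2\rceil$, so any vertex $v\in U$ attaining the maximum has $|N_U(v)|\geq|U|/2$; that is, $v$ is joined to at least half of the rest of $U$.

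From here the goal is to produce a cycle $C^*$ contradicting the extremality of $C$: either $\Delta(G\setminus C^*)<d$, or $\Delta(G\setminus C^*)=d$ and $|V(C^*)|>|V(C)|$. The natural way to do this is to incorporate $v$ (and perhaps some of its $U$-neighbors) into the cycle, so that the ``bad'' vertex $v$ is removed from the outside. I would try to find a $v$-to-$V(C)$ path $P$ whose interior lies in $U$ and then splice $P$ into $C$ at a suitable edge, giving a cycle strictly longer than $C$ while not increasing the maximum outside degree; if $v$ itself has no neighbour on $C$, the high density of $v$ in $G[U]$ lets one apply a P\'osa-type rotation, starting from a longest path in $G[U]$ with endpoint $v$ and using $|N_U(v)|\geq|U|/2$ to generate many candidate endpoints, so that at least one of them hopefully reaches $V(C)$.

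The main obstacle is the case where the component of $v$ in $G[U]$ is entirely disjoint from $N(V(C))$: then no rotation can reach $V(C)$ and the replacement cycle $C^*$ must live inside this component. Here I would try to show that the inequalities $|U|\leq 2d$ and $|N_U(v)|\geq|U|/2$ force the component of $v$ in $G[U]$ to be dense enough to carry a cycle through $v$, which can be used as $C^*$ and which strictly improves the extremal objective. Making sure this final step actually beats the lexicographic minimum --- rather than merely tying it --- is where the argument looks most delicate, and it is likely that a further tiebreaker in the extremal choice of $C$ (for example, preferring cycles whose complement realises its maximum degree at as few vertices as possible) will be needed to close the loop.
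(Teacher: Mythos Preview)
Your extremal setup is broken at the very first move: the cycle that minimizes $\Delta(G\setminus C)$ and then maximizes $|V(C)|$ need \emph{not} satisfy the theorem, so there is nothing to contradict. Take $G$ to be the disjoint union of two copies of $K_m$ with $m\ge 3$. Since any cycle lies in one component, the other $K_m$ sits entirely in $G\setminus C$, so $\Delta(G\setminus C)\ge m-1$ for every cycle $C$; hence the minimum value of $\Delta(G\setminus C)$ is exactly $d=m-1$. Subject to this, the longest cycle is a Hamilton cycle of one copy of $K_m$, giving $U$ equal to the other $K_m$ with $|U|=m$. Now $d=m-1>\tfrac12(|U|-1)$, so you are in your ``contradiction'' regime, yet there is \emph{no} cycle $C^*$ with $\Delta(G\setminus C^*)<d$, and no cycle with $\Delta=d$ and more than $m$ vertices. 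No tiebreaker rescues this: your $C$ is genuinely optimal for your objective, it just is not the cycle the theorem wants (here $C=\emptyset$ works, since then $|U|=2m$ and $m-1\le \tfrac12(2m-1)$).

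This also exposes why the ``replace $C$ by a cycle $C^*$ inside the isolated component of $U$'' step cannot work in general: once $C^*\subseteq U$, all of $V(C)$ lands in $G\setminus C^*$, and those vertices may have degree far above $d$ there. In the example above, $G\setminus C^*$ contains a full $K_m$, so $\Delta(G\setminus C^*)\ge m-1=d$ while $|C^*|\le m=|C|$; you never beat $C$. The difficulty you flagged as ``delicate'' is therefore not a tiebreaking issue but a structural obstruction. (There is also a smaller problem earlier: splicing a single $v$--$V(C)$ path into $C$ needs two attachment points on $C$, not one, so even the ``easy'' case requires more than you wrote.)

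For comparison, the paper does \emph{not} argue extremally on cycles. It proves an inductive path lemma: if $G$ has no ``balanced-components'' partition and $A,B\subseteq V(G)$ are large enough, then there is an $A$--$B$ path $P$ with $\Delta(G\setminus P)\le\tfrac12(|G\setminus P|-1)$. The theorem follows by taking a vertex $v$ of degree $\ge |G|/2$ and applying the lemma to $G-v$ with $A=B=N(v)$, then closing the path through $v$. The bulk of the work is a case analysis handling graphs that become balanced after deleting one or two vertices; this is what replaces the extremal/rotation machinery you were reaching for.
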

The above theorem isn't quite strong enough to directly imply Lehel's Conjecture. However, with more work, it is possible to use it to give an alternative proof of the conjecture. This shouldn't be very surprising since using known results about Hamiltonicity, it is easy to show that graphs $H$ satisfying (\ref{EqCycleMinDegree}) whose complements aren't Hamiltonian must have a very limited structure. We'll briefly discuss how to use Theorem~\ref{CycleMinDegree} to get an alternative proof of the Bessy-Thomass\'e Theorem in Section~\ref{SectionConclusion}.

Theorem~\ref{CycleMinDegree} is proved in Section~\ref{SectionCycleMinDegreeDisconnected}. In Section~\ref{SectionCycleMinDegreeSimple}  a slight weakening of Theorem~\ref{SectionCycleMinDegreeSimple} is proved with (\ref{EqCycleMinDegree}) replaced by ``$\Delta(G\setminus C)\leq \frac{1}{2}|G\setminus C|$''.  This weakening has a much easier proof than Theorem~\ref{CycleMinDegree} and serves to illustrate the ideas we use in the main theorem.

Theorem~\ref{CycleMinDegree} is a very natural result of the form (\ref{MainQuestion}) which we set out to investigate. Our theorem shows that results like~(\ref{MainQuestion}) do hold, and finds the best possible such result which holds for general graphs.
Indeed the graph $G$ in Figure~\ref{FigureGeneralGraphExtremal} shows that the degree condition (\ref{EqCycleMinDegree}) cannot be decreased in general. 
 There are various possible directions for further research. One direction is to change the degree condition on $H$ from maximum degree to something else. For example is it true that every graph $G$ can be partitioned into a cycle and an induced subgraph $H$ such that $\overline{H}$ satisfies Ore's condition for Hamiltonicity? We discuss this and similar open problems in Section~\ref{SectionConclusion}.

Another direction for further research is to see if Theorem~\ref{CycleMinDegree} can be improved if we impose some extra conditions on the graph $G$. The graphs $G$ in Figure~\ref{FigureGeneralGraphExtremal} are the only ones we currently know which don't have a cycle $C$ with $\Delta(G[V(G)\setminus C])\leq \frac{1}{2}|V(G)\setminus C|-1$. It seems likely that if mild conditions are imposed on the graph $G$ in Theorem~\ref{CycleMinDegree}, then the degree condition (\ref{EqCycleMinDegree}) could be improved. The second main result of this paper is to improve Theorem~\ref{CycleMinDegree} in the case when $G$ is highly connected.
\begin{theorem} \label{CycleMinDegreeConn} 
Every $k$-connected graph $G$ has a cycle $C$ with 
\begin{equation}\label{EqCycleMinDegreeConn}
\Delta(G\setminus C)\leq \frac{1}{k+1}|G\setminus C|+3.%-\frac{2}{k+1}.
\end{equation}
\end{theorem}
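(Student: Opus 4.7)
The plan is to take $C$ to be a cycle of maximum length in $G$ and to derive a contradiction from the assumption that the conclusion fails. So set $R := V(G)\setminus V(C)$ and suppose that some $v\in R$ has $d := d_R(v) > |R|/(k+1) + 3$. Write $N := N_R(v)$; then $\{v\}\cup N$ spans a star and so lies in a single connected component $D$ of $G[R]$. By $k$-connectivity of $G$, the attachment set $W := N_G(D)\cap V(C)$ is either all of $V(C)$ or a separator, so in either case $|W|\geq k$. Fix $k$ attachments $w_1,\dots,w_k\in W$ in cyclic order on $C$, dividing $C$ into arcs $I_1,\dots,I_k$, and use Menger's theorem inside $G[D\cup V(C)]$ to fix $k$ internally-disjoint paths $P_1,\dots,P_k$, where $P_i$ joins $w_i$ to some vertex of $\{v\}\cup N$.

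The central tool is the classical longest-cycle rerouting lemma: whenever $D$ contains a path of length $p$ from a neighbour of $w_i$ to a neighbour of $w_{i+1}$, one must have $|I_i|\geq p+2$, since otherwise this path could be spliced in place of $I_i$ to give a cycle longer than $C$. Combining the star centred at $v$ with the Menger paths $P_i$, one constructs for each $i$ an augmenting path that absorbs $v$ together with a fair share of the $N$-vertices. Partitioning $N$ into $k+1$ blocks (one for each arc, plus a residual block of leaves that must remain in $R$) and arranging for each of the $k$ augmenting paths to absorb at least $\approx d/(k+1)$ vertices, the lemma yields $|I_i|\geq d/(k+1)-O(1)$, and simultaneously keeps at least $\approx d/(k+1)$ leaves in $R$. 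Reconciling the resulting lower bounds on $|V(C)|$ and $|R|$ via $|V(G)|=|V(C)|+|R|$ then forces $|R|\geq(k+1)(d-3)$, contradicting the hypothesis $d>|R|/(k+1)+3$.

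The main obstacle is the balanced partition of $N$ among the $k+1$ slots in the previous paragraph, and especially the degenerate case in which $D$ is essentially the star $K_{1,d}$: then every path in $D$ between two attachments has length at most $2$, so the rerouting lemma only yields $|I_i|\geq 4$ directly, which is far too weak. To salvage the bound in that case, one uses $k$-connectivity of $G$ to force each $u\in N$ to have at least $k-1$ neighbours on $V(C)$, and then applies the rerouting lemma to the length-$2$ paths $u$-$v$-$u'$ (with distinct $u,u'\in N$) to force arc-distance at least $4$ between the $V(C)$-neighbourhoods of different leaves. Combined with the fact that two $N$-neighbours on $V(C)$ coming from the \emph{same} leaf must already be at arc-distance at least $2$, this forces $|V(C)|$ to grow roughly like $(k+1)d$, which in turn forces $|V(G)|-|V(C)|=|R|$ to be large enough (or else produces a strictly longer cycle), again closing the contradiction.
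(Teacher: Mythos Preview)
Your approach has a fatal flaw at the very first step: the longest cycle does \emph{not} in general satisfy the conclusion of the theorem, so no amount of rerouting can produce a contradiction.

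Consider the graph $G$ built from $k+1$ disjoint copies $M_1,\dots,M_{k+1}$ of $K_m$ together with $k$ extra vertices $x_1,\dots,x_k$, where every edge is present except those between distinct $M_i$'s. This graph is $k$-connected. Any cycle in $G$ must use one of the $x_j$ each time it passes from one $M_i$ to another, so it can visit at most $k$ of the cliques; in particular a longest cycle $C$ misses one entire $K_m$, giving $|R|=m$ and $\Delta(G[R])=m-1$. But then
\[
\frac{|R|}{k+1}+3=\frac{m}{k+1}+3<m-1=\Delta(G[R])
\]
as soon as $m\geq 5+5/k$. So for this $G$ the longest cycle violates the inequality you are trying to establish, and your assumed contradiction never materialises.

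This also explains why the logic in your sketch breaks down: your rerouting lemma produces lower bounds on the arc lengths $|I_i|$, hence on $|V(C)|$, whereas what you actually need is a lower bound on $|R|$. The sentence ``reconciling the resulting lower bounds on $|V(C)|$ and $|R|$ via $|V(G)|=|V(C)|+|R|$ then forces $|R|\geq (k+1)(d-3)$'' cannot be right, since a lower bound on $|V(C)|$ with $|V(G)|$ fixed yields an \emph{upper} bound on $|R|$. Likewise in your degenerate star case, showing that $|V(C)|$ grows like $(k+1)d$ is again the wrong direction.

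The paper's proof works precisely because it abandons the longest-cycle paradigm. It selects $C$ by first minimising $\Delta(G\setminus C)$, then a secondary invariant $f(G\setminus C)$ counting vertices in maximum-degree components, and only then $|C|$. With this extremal choice, the argument locates $k$ vertices $b_1,\dots,b_k$ on $C$ (the first ``large-neighbourhood'' vertices clockwise from $k$ attachment points of a max-degree component $A$) and shows that the $k+1$ sets $A,\overline{N}(b_1),\dots,\overline{N}(b_k)\subseteq R$ are pairwise disjoint, each of size at least $\Delta(G\setminus C)-3$. That immediately gives $|R|\geq (k+1)\Delta(G\setminus C)-3k+1$. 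In the example above, the cycle chosen this way is short (it passes through the $x_j$'s and one vertex of each of $k$ cliques), leaving all $k+1$ cliques almost intact in $R$ and thus making $|R|$ large relative to $\Delta(G[R])$.
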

Comparing the bound (\ref{EqCycleMinDegreeConn}) to (\ref{EqCycleMinDegree}), we see that for $k\geq 2$ (\ref{EqCycleMinDegreeConn}) is better unless $|H|$ is very small.
There is a sense in which Theorem~\ref{CycleMinDegreeConn} is tight---For every $k\geq 1$, there are $k$-connected graphs $G$ which don't have a cycle $C$ with $\Delta(G[V(G)\setminus C])\leq \frac{1}{k+1}|V(G)\setminus C|-\frac{2k+1}{k+1}.$ See Figure~\ref{FigureKConnectedExtremal} for an example of such graphs. This shows that the ``$\frac{1}{k+1}|H|$'' term in (\ref{EqCycleMinDegreeConn}) cannot be decreased. However the ``$+3$'' term is not optimal and can significantly probably be improved for all $k$.

Theorem~\ref{CycleMinDegreeConn} is proved in Section~\ref{SectionConnected}. We remark that our proof actually gives a slightly better bound ``$\Delta(G\setminus C)\leq \frac{1}{k+1}|G\setminus C|+3-\frac{4}{k+1}$'' instead of (\ref{EqCycleMinDegreeConn}). We don't place much emphasis on this since neither bound is tight and (\ref{EqCycleMinDegreeConn})  looks cleaner.

\begin{figure}
  \centering
    \includegraphics[width=0.55\textwidth]{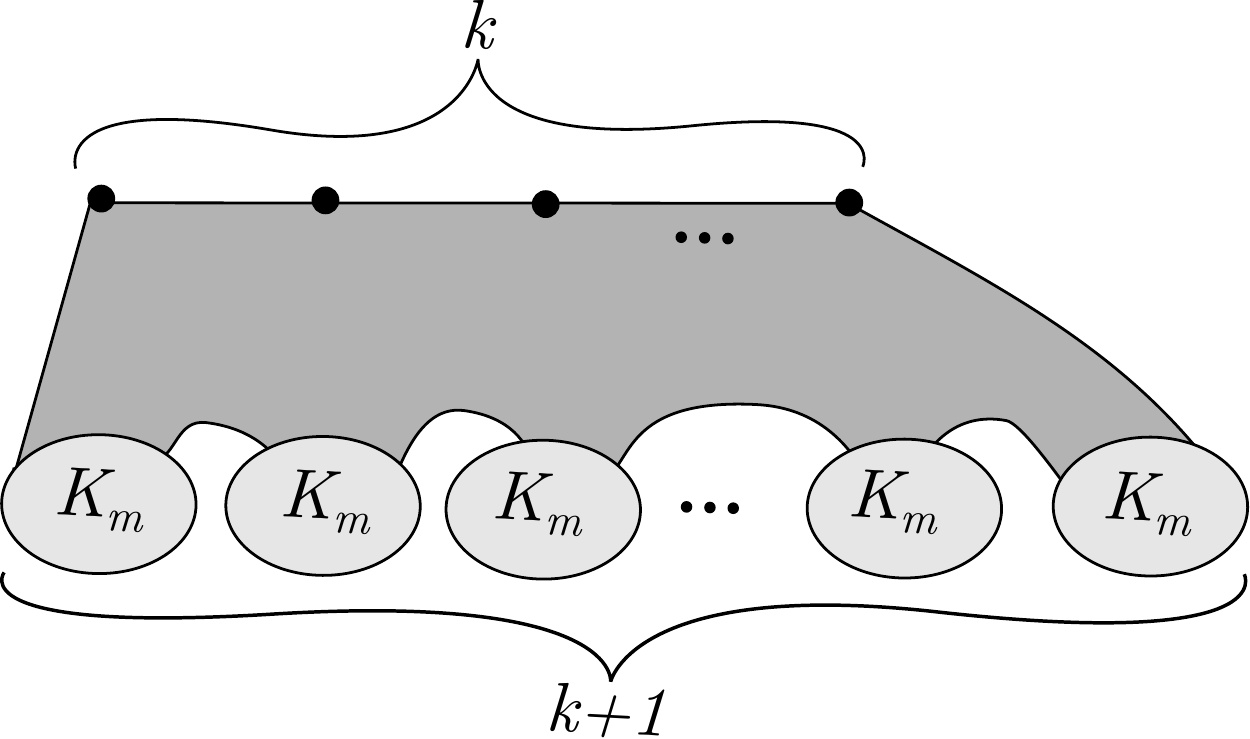}
  \caption{A $k$-connected graph in which everty cycle has $\Delta(G[V(G)\setminus C])\geq \frac{1}{k+1}|V(G)\setminus C|--\frac{2k+1}{k+1}$. The graph consists of $k+1$ copies of $K_m$, and $k$ extra vertices. All the edges are present except those going between the $K_m$s. To see that every cycle in this graph has $\Delta(G[V(G)\setminus C])\geq \frac{1}{k+1}|V(G)\setminus C|--\frac{2k+1}{k+1}$, notice that a cycle cannot intersect all the copies of $K_m$ (since it must use at least one of the extra vetices every time it moves from one $K_m$ to the next.) Therefore we have $\Delta(G[V(G)\setminus C])\geq \Delta(K_m)=m-1=\frac{1}{k+1}|V(G)|-\frac{k}{k+1}-1\geq \frac{1}{k+1}|V(G)\setminus C|-\frac{2k+1}{k+1}$ as required. 
  }\label{FigureKConnectedExtremal}
\end{figure}

We prove Theorem~\ref{CycleMinDegreeConn} with a particular application in mind. It can be used to prove a new approximate version of a conjecture due to Erd\H{o}s, Gy\'arf\'as, and Pyber. We describe this application in the next section.

\subsubsection*{An application: Partitioning a 3-coloured $K_n$ into monochromatic cycles.}
Erd\H{o}s, Gy\'arf\'as \& Pyber conjectured the following.
\begin{conjecture} [Erd\H{o}s, Gy\'arf\'as \& Pyber, \cite{Erdos}] \label{Erdos}
The vertices of every $r$-edge-coloured complete graph can be covered with $r$ vertex-disjoint monochromatic cycles.
\end{conjecture}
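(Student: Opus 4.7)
The plan is induction on $r$. The base case $r=1$ is trivial (one Hamiltonian cycle covers $K_n$) and $r=2$ is exactly the Bessy--Thomass\'e Theorem~\ref{TheoremBessyThomasse}. For $r \geq 3$, given an $r$-edge-coloured $K_n$, the goal of the inductive step is to extract one monochromatic cycle $C$ in some colour $i$ such that colour $i$ becomes \emph{effectively absent} on the residual graph, so that the residual can then be covered by $r-1$ further monochromatic cycles in colours $\neq i$ by induction.

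The main mechanism is Theorem~\ref{CycleMinDegreeConn}. By pigeonhole some colour class $G_i$ carries at least $\binom{n}{2}/r$ edges. A Mader-type peeling argument then extracts from $G_i$ a subgraph $H$ on $\Omega(n)$ vertices with connectivity $\kappa(H) \geq cn/r$ for a suitable absolute constant $c$. Applying Theorem~\ref{CycleMinDegreeConn} to $H$ with $k = \kappa(H)$ yields a cycle $C \subseteq H$ of colour $i$ with $\Delta(H \setminus C) \leq \frac{1}{\kappa(H)+1}|H \setminus C| + 3 = o(|H\setminus C|)$, so the colour-$i$ neighbourhoods in $V(H)\setminus V(C)$ become vanishingly small compared to $|V(H)\setminus V(C)|$.

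It then remains to fold in the ``exceptional'' vertices: those in $V(K_n) \setminus V(H)$ together with the few vertices of $V(H) \setminus V(C)$ still carrying high colour-$i$ degree. I would collect them into a set $S$ and, using the density of $G_i$ and an absorbing-path argument inside $G_i$, extend $C$ through $S$ to a single, longer colour-$i$ cycle $C^*$ that covers $V(C)\cup S$ while preserving the bound $\Delta(G_i\setminus C^*)\le \tfrac{1}{2}(|G_i\setminus C^*|-1)$. Once this has been achieved, the complement of colour $i$ restricted to $V(K_n)\setminus V(C^*)$ has high minimum degree, so every edge of the remaining complete graph can be viewed as being coloured by one of the $r-1$ colours in $\{1,\dots,r\}\setminus\{i\}$; induction then produces the $r-1$ monochromatic cycles needed, for a total of $r$. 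The paper's convention that a single vertex or single edge counts as a cycle gives slack for degenerate cases and ensures the induction step remains valid when $S$ is empty or almost all of $V(K_n)$ is already covered by $C^*$.

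The main obstacle is precisely that this reduction from $r$ to $r-1$ colours must be \emph{engineered}, since removing a monochromatic cycle does not automatically reduce the colour count. Theorem~\ref{CycleMinDegreeConn} controls only the residual inside the extracted subgraph $H$; absorbing the vertices of $V(K_n)\setminus V(H)$---which could be a constant fraction of $K_n$---and folding them into $C^*$ without ever exceeding the budget of one colour-$i$ cycle is the delicate step. I expect this to require either iterating the Mader extraction across several colour classes in parallel (to exhaust the vertex set layer by layer), or designing a monochromatic absorber inside $G_i$ before invoking Theorem~\ref{CycleMinDegreeConn}, so that the cycle produced can be rerouted through any prescribed small set. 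Balancing this absorption against the ``$+3$'' slack in Theorem~\ref{CycleMinDegreeConn}, while keeping the final count at exactly $r$ cycles, is the step I would spend the most effort on.
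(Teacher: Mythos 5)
There is a fundamental problem before any of the technical details: the statement you are trying to prove is a \emph{conjecture}, and the paper explicitly states that it is \emph{false} for every $r\geq 3$, citing the counterexamples constructed in~\cite{PokrovskiyCycles}. No proof can exist, so an attempt at one must break down somewhere. Your plan is an induction on $r$ with Bessy--Thomass\'e as the base case $r=2$; since the statement is true at $r=2$ but false at $r=3$, the inductive step must fail, and it does. The flaw is in the phrase ``colour $i$ becomes effectively absent'': after deleting a colour-$i$ cycle $C^*$ with $\Delta(G_i\setminus C^*)$ small, the residual complete graph is still $r$-edge-coloured, not $(r-1)$-edge-coloured. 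Low colour-$i$ degree is not the same as zero colour-$i$ degree, and the induction hypothesis cannot be invoked unless colour $i$ is literally gone. If you instead recolour the few remaining colour-$i$ edges with other colours, any cycle the induction produces that uses a recoloured edge is not monochromatic in the original colouring, so the output is not a valid cover. This gap is unfixable precisely because the conjecture is false.

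What the paper actually proves is the approximate version, Theorem~\ref{ThreeCycles}: for $r=3$, three disjoint monochromatic cycles covering all but a bounded constant number of vertices. The paper's argument does use Theorem~\ref{CycleMinDegreeConn} in the highly connected case (combined with Letzter's Theorem~\ref{Letzter} to finish the two-coloured residual), but the case where no colour is highly connected requires a separate structural classification (Lemma~\ref{CasesLemma}) and a sequence of path-partitioning lemmas; it is not handled by a Mader extraction plus absorption, and the losses that accumulate are exactly why a constant number of vertices is sacrificed rather than zero. If you want to pursue this direction, the right target is Theorem~\ref{ThreeCycles} (or Conjecture~\ref{approximate} for general $r$), not Conjecture~\ref{Erdos} itself.
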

Notice that the $r=2$ case of the above conjecture follows from Lehel's Conjecture. Indeed given a $2$-edge-coloured $K_n$, applying the Bessy-Thomass\'e Theorem to the red colour class of $K_n$, we obtain a covering of $K_n$ by a red cycle and a disjoint blue cycle.
%It is worth noticing that, unlike in Lehel's Conjecture, the cycles in Conjecture~\ref{Erdos} are not required to have different colours.

The Erd\H{o}s-Gy\'arf\'as-Pyber Conjecture has attracted a lot of attention. See~\cite{GyarfasSurvey} for a detailed survey of the work related to this conjecture. Despite many positive results~(see \cite{GyarfasSurvey}), the conjecture turned out to be false for all $r\geq 3$~\cite{PokrovskiyCycles}. But not all hope is lost---the counterexamples constructed in~\cite{PokrovskiyCycles} are only barely counterexamples---in all the $r$-edge-coloured complete graphs constructed in~\cite{PokrovskiyCycles}, it is possible to find $r$ disjoint monochromatic cycles covering all, except one, vertex in the graph. Therefore it is still possible that approximate versions of the conjecture are true. 
For $r=3$, Gy\'arf\'as, Ruszink\'o, S\'ark\"ozy, and Szemer\'edi proved the following approximate version of Conjecture~\ref{Erdos}.
\begin{theorem} [Gy\'arf\'as, Ruszink\'o, S\'ark\"ozy \& Szemer\'edi, ~\cite{Szemeredi1}] \label{threecyclesSzemeredi}
Every $3$-edge-coloured $K_n$ has $3$ vertex-disjoint monochromatic cycles covering $n-o(n)$ vertices.
\end{theorem}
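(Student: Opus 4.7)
The plan is to follow the \L{}uczak--R\"odl--Szemer\'edi ``connected matching'' paradigm, which has become the standard tool for this type of cycle-partition problem. First I would apply Szemer\'edi's regularity lemma, in its multicoloured form, to the $3$-edge-coloured $K_n$ with a small regularity parameter $\varepsilon$ and density cutoff $d\gg\varepsilon$. This produces an equipartition $V(K_n)=V_0\cup V_1\cup\cdots\cup V_N$ with $|V_0|\leq\varepsilon n$ and a reduced graph $R$ on $[N]$ whose edges are those pairs $(V_i,V_j)$ that are $\varepsilon$-regular and have density at least $d$ in some colour; each edge of $R$ is then coloured by a choice of such majority colour. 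Standard counting shows that $R$ misses at most $\varepsilon\binom{N}{2}$ edges of $K_N$, so we may treat $R$ as a $3$-edge-coloured ``near-complete'' graph.

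The whole problem is now reduced to the following combinatorial statement about $R$: \emph{every $3$-edge-coloured near-complete graph on $N$ vertices contains three vertex-disjoint monochromatic connected matchings together covering $(1-o(1))N$ vertices,} where a ``connected matching'' is a matching all of whose edges lie in a single monochromatic component. I would attack this in two stages. First, a classical theorem of Gy\'arf\'as guarantees that the vertex set of any $3$-edge-coloured $K_N$ can be covered by at most three monochromatic connected subgraphs $H_1,H_2,H_3$, one per colour. Second, inside each $H_i$ extract a near-perfect matching $M_i$ by a K\"onig-type argument applied to the bipartite double, throw unmatched vertices back into a residual set, and iterate the covering step $O(1)$ times. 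The $o(N)$ uncovered set at the end absorbs vertices lost to small independent pieces inside each $H_i$ as well as vertices lying in the symmetric difference of consecutive covering components.

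With $M_1,M_2,M_3$ in hand, the final step is the standard regularity blow-up: each edge $V_jV_k$ of $M_i$ corresponds to an $\varepsilon$-regular pair of density $\geq d$ in colour $i$, and since all edges of $M_i$ live in one monochromatic connected component of $R$ we can link consecutive matching edges by short monochromatic paths in colour $i$ routed through auxiliary clusters. The standard cycle embedding lemma for regular pairs then produces, inside each blown-up matching, one long monochromatic cycle in colour $i$ covering all but an $\varepsilon$-fraction of each cluster. Summing the losses --- the exceptional set $V_0$, the clusters uncovered by the $M_i$, and the vertices lost inside each regular pair --- gives the advertised $o(n)$ deficit, and the three resulting cycles are vertex-disjoint because the $M_i$ were.

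The main obstacle is the combinatorial statement in the middle paragraph. For $r=2$ colours one has the exact Bessy--Thomass\'e Theorem~\ref{TheoremBessyThomasse}, but for $r=3$ no such exact analogue can hold, since Conjecture~\ref{Erdos} is known to fail in this case by~\cite{PokrovskiyCycles}. The delicate point is arranging for the three covering monochromatic connected subgraphs produced by Gy\'arf\'as's theorem to be simultaneously essentially matching-saturable while remaining vertex-disjoint after the matchings are pulled out; this is precisely where the $o(N)$ slack in the reduced graph must be spent, and it is the ultimate source of the $o(n)$ loss in the statement of the theorem.
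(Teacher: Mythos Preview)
This theorem is not proved in the present paper at all: it is quoted from Gy\'arf\'as, Ruszink\'o, S\'ark\"ozy and Szemer\'edi~\cite{Szemeredi1}, and the only comment the paper makes about its proof is the single sentence ``The proof of this theorem uses the Regularity Lemma, and so the bound on $o(n)$ in Theorem~\ref{threecyclesSzemeredi} is not very good.'' So there is no ``paper's own proof'' to compare against. Your outline via the \L{}uczak connected-matching method is indeed the right framework, and it is precisely the approach of~\cite{Szemeredi1}; in that sense your proposal is faithful to the original source rather than to anything in this paper.

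That said, the heart of your sketch --- the reduced-graph statement that three vertex-disjoint monochromatic connected matchings cover $(1-o(1))N$ vertices --- is where all the work lies, and your proposed two-stage attack is too optimistic. The ``classical theorem of Gy\'arf\'as'' you invoke does not give three monochromatic connected covers \emph{one per colour}; and even granting a cover by three monochromatic connected pieces, a K\"onig-type extraction of near-perfect matchings inside each one, followed by iterating, does not by itself control the interaction between the pieces well enough to make the matchings disjoint and jointly near-spanning. The actual argument in~\cite{Szemeredi1} proceeds by a structural case analysis on the sizes and intersection pattern of the large monochromatic components, not by a clean cover-then-match loop. You correctly flag this step as the main obstacle, but the plan you give for it would not close.

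Finally, note that the contribution of \emph{this} paper is Theorem~\ref{ThreeCycles}, which strengthens Theorem~\ref{threecyclesSzemeredi} by replacing $o(n)$ with an absolute constant. That proof avoids regularity entirely: it combines the new cycle-plus-sparse-remainder theorem (Theorem~\ref{CycleMinDegreeConn}) with Letzter's minimum-degree result (Theorem~\ref{Letzter}) and a structural classification (Lemma~\ref{CasesLemma}) of colourings with no large highly-connected monochromatic piece. If your aim was to reconstruct the paper's methods, that is the argument to study.
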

The proof of this theorem uses the Regularity Lemma, and so the bound on $o(n)$ in Theorem~\ref{threecyclesSzemeredi} is not very good.  In particular the function $o(n)$ in Theorem~\ref{threecyclesSzemeredi} tends to infinity with $n$. 
In this paper we use Theorem~\ref{CycleMinDegreeConn} to prove an improved approximate version of the $r=3$ case of Conjecture~\ref{Erdos} where only a constant number of vertices are left uncovered.
\begin{theorem}  \label{ThreeCycles}
For sufficiently large $n$, every $3$-edge-coloured $K_n$ has $3$ vertex-disjoint monochromatic cycles covering $n-\cCycles$ vertices.
\end{theorem}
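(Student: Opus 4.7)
The plan is to apply Theorem~\ref{CycleMinDegreeConn} to produce the green cycle, reducing the problem to covering the leftover set by red and blue cycles via a robust variant of the Bessy-Thomass\'e Theorem.

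The first step is a preprocessing: show that in every 3-edge-coloured $K_n$, after discarding a bounded number of vertices, one colour (say green) is $k$-connected on the remaining vertex set $X$, where $k$ is a large constant to be chosen later (of order $100$). The idea is that the densest colour has at least $\binom{n}{2}/3$ edges, and iteratively peeling off monochromatic separators of size less than $k$ either produces the required $k$-connected core of size $n-O(1)$ or terminates in a rigid configuration that must be handled directly. In the main case, applying Theorem~\ref{CycleMinDegreeConn} to the resulting $k$-connected green subgraph yields a green cycle $C_g$ with
\[
\Delta\bigl(G_{\text{green}}[X\setminus V(C_g)]\bigr)\;\leq\;\frac{|X\setminus V(C_g)|}{k+1}+3,
\]
which is a small absolute constant $c$ once $k$ is large enough.

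Setting $V'=V(K_n)\setminus V(C_g)$, the induced green subgraph on $V'$ has maximum degree bounded by a constant. The second step covers $V'$ by a red cycle $C_R$ and a blue cycle $C_B$ with only a constant vertex loss: recolour each green edge inside $V'$ arbitrarily as red or blue and apply Theorem~\ref{TheoremBessyThomasse} to the resulting 2-edge-coloured $K_{V'}$, obtaining cycles that cover $V'$ but may traverse a few ``fake'' edges (the former green edges). Each fake edge is then locally repaired by rerouting through a common red (respectively blue) neighbour of its endpoints; such neighbours are plentiful because the relevant minimum monochromatic degree in $V'$ is $\Omega(|V'|)$.

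The main obstacle is this final repair step. Any single fake edge is cheap to fix, but one must argue that the total vertex loss from all repairs combined is $O(1)$, not $O(1)$ per fake edge. This requires a global argument---for instance, a system of vertex-disjoint repair paths or an absorption method that reserves a small auxiliary set of vertices in advance. The degenerate case of the preprocessing (when no colour is $k$-connected on most of the vertices) must also be handled by a direct structural argument, exploiting the strong constraints that this situation imposes on a 3-edge-coloured $K_n$. Combining the bounded losses from the preprocessing, from the ``$+3$'' term in (\ref{EqCycleMinDegreeConn}), and from the repair step yields the explicit constant $\cCycles$ in the statement.
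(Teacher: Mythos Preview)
Your proposal contains a misreading of Theorem~\ref{CycleMinDegreeConn} that breaks the argument. The bound
\[
\Delta\bigl(G_{\text{green}}[X\setminus V(C_g)]\bigr)\;\leq\;\frac{|X\setminus V(C_g)|}{k+1}+3
\]
is \emph{not} an absolute constant: it is linear in $|X\setminus V(C_g)|$, and that leftover set may have order $n$. Taking $k$ large makes the coefficient $\tfrac{1}{k+1}$ small, but the quantity itself stays proportional to $|V'|$. Hence every vertex of $V'$ can still have up to $|V'|/(k+1)$ green neighbours in $V'$, and the number of green edges inside $V'$ is $\Theta(|V'|^2/k)$. After you recolour these and apply the Bessy--Thomass\'e Theorem, the red and blue cycles may each traverse $\Theta(|V'|)$ fake edges, so any repair-per-edge scheme incurs a linear vertex loss rather than a constant one. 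The ``global argument'' you gesture at cannot rescue this, because the obstruction is not coordination between repairs but the sheer number of edges needing repair.

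The paper avoids the whole issue. Once $\Delta(G_{\text{green}}\setminus C_g)\le \tfrac{1}{k+1}|V'|+3$, it simply \emph{deletes} the green edges inside $V'$ rather than recolouring them; the remaining red--blue graph on $V'$ then has minimum degree at least $\tfrac{k}{k+1}|V'|-4$, which for $k=4$ exceeds $\tfrac{3}{4}|V'|$ unless $|V'|$ is bounded. Letzter's Theorem (Theorem~\ref{Letzter}) now partitions $V'$ \emph{exactly} into one red and one blue cycle, with no repair step at all. This black box is the ingredient your sketch is missing. Separately, the ``rigid configurations'' that arise when no colour is highly connected---the $4$-partite colouring and the tripartite structure of Lemma~\ref{CasesLemma}---are not a side remark but occupy the majority of the proof in the paper, and require a battery of auxiliary path-partition lemmas rather than a short direct argument.
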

This theorem is proved in Section~\ref{Section3Cycles}.
The same theorem was discovered independently by Letzter~\cite{Letzter3Cycles} who proved it using completely different methods, and with a better constant of $60$ rather than $\cCycles$.

\subsection*{Notation}
Throughout this paper we will use additive notation to add/subtract vertices i.e. if $G$ is a graph and $S\subseteq V(G), v\in V(G)$, then ``$S+v$'' means $S\cup\{v\}$ and $S-v$ means $S\setminus \{v\}$.
We will also use additive notation for concatenating paths---for two paths $P=p_1 \dots p_i$ and $Q=q_1 \dots q_j$, $P + Q$ denotes the path with vertex sequence $p_1\dots p_i q_1\dots q_j$.

%Recall that for a graph $G$, $\delta(G)$ and $\Delta(G)$ denote the minimum and maximum degree respectively of vertices in $G$. If $S$ is a set of vertices in $G$ we will use $\delta(S)$ and $\Delta(S)$ to mean $\delta(G[S])$ and $\Delta(G[S])$.

\section{A short, simple theorem}\label{SectionCycleMinDegreeSimple}
This section is purely for exposition. 
Here we give a slight weakening of Theorem~\ref{CycleMinDegree}, which has a much shorter proof.
\begin{theorem} \label{CycleMinDegreeSimple}
Every graph $G$ has a cycle $C$ with
\begin{equation}\label{EqCycleMinDegreeSimple}
\Delta(G\setminus C)\leq \frac{1}{2}|G\setminus C|.
\end{equation}
\end{theorem}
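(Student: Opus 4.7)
My plan is to argue by extremality. Take $C$ to be a cycle in $G$ that first maximises $|V(C)|$ and, subject to this, minimises $\Delta(G\setminus C)$; write $H = G\setminus C$. I will show that for this choice of $C$ one has $\Delta(H)\le \tfrac{1}{2}|H|$, by supposing the contrary and producing a cycle violating the extremality of $C$. So assume $v \in V(H)$ has $d_H(v) = \Delta(H) > \tfrac{1}{2}|H|$.

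The first step is to extract structural information on $v$ from the maximality of $|V(C)|$. Most basically, $v$ cannot be adjacent to two consecutive vertices of $C$, or else the edge between them could be replaced by a length-two path through $v$ to lengthen $C$. More generally, if $a,b\in N_C(v)$ and the shorter $C$-arc from $a$ to $b$ has $\ell$ internal vertices, then no path $a\, v\, u_1 \cdots u_k\, b$ with $u_1,\dots,u_k \in V(H)$ distinct can have $k\ge \ell$. These restrictions say that the neighbours of $v$ on $C$ are spread out and that paths from $v$ into $H$ cannot ``reach'' other neighbours of $v$ on $C$ along long detours.

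Next I would use the minimality of $\Delta(G\setminus C)$ by attempting a single-vertex swap. The natural candidate is: pick $c \in V(C)$ whose two $C$-neighbours both lie in $N_G(v)$, and set $C' = (C-c)+v$, so $|C'|=|C|$ and $H' = (H-v)+c$. For $u \in V(H)-v$ one has $d_{H'}(u) = d_H(u) - \mathbf{1}[u \sim v] + \mathbf{1}[u \sim c]$, so the degree drops whenever $u\sim v$ and $u\not\sim c$. Since $d_H(v) > \tfrac{1}{2}|H|$, more than half of $V(H)-v$ loses a neighbour when $v$ is removed, and I would argue, by averaging over the many candidate choices of $c$ furnished by the first step, that some swap forces $\Delta(H') < \Delta(H)$, contradicting the choice of $C$.

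The main obstacle is guaranteeing a good $c$ and showing that the swap strictly reduces the maximum degree. The pitfalls are that (i) the swapped-in vertex $c$ itself could have many neighbours in $V(H)-v$; (ii) some $u$ with $d_H(u)=\Delta(H)$, $u\not\sim v$, $u\sim c$ could keep the maximum saturated in $H'$; and (iii) no suitable $c$ might exist at all. If (iii) blocks the simple swap, I would instead replace a longer $C$-arc by a length-matched path $v\, u_1 \cdots u_k$ in $H$, which is possible because $d_H(v) > \tfrac{1}{2}|H|$ makes short paths starting at $v$ plentiful. Pitfalls (i) and (ii) are precisely where the slack between $\tfrac{1}{2}|H|$ and the sharper bound $\tfrac{1}{2}(|H|-1)$ of Theorem~\ref{CycleMinDegree} is spent: with the weaker bound it suffices to show that the troublesome vertices are few, so a single averaging argument closes the case, whereas the stronger inequality would require the more delicate analysis carried out in the main theorem.
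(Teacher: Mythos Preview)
Your extremal criterion --- maximise $|V(C)|$ first, then minimise $\Delta(G\setminus C)$ --- is the wrong priority, and it breaks the argument outright. Take $G$ to be the disjoint union of a chordless cycle $C_m$ and a clique $K_t$ with $m>t\ge 3$. The unique longest cycle in $G$ is $C_m$, so your choice forces $C=C_m$ and $H=K_t$, giving $\Delta(H)=t-1>t/2$. Now every $v\in K_t$ has \emph{no} neighbours on $C$, so neither your single-vertex swap nor your arc-replacement fallback is available: both require endpoints of the inserted path to be adjacent to $C$. There is no cycle of length $\ge m$ other than $C_m$ itself, so your extremality gives you nothing to contradict and the argument simply stalls. (The theorem of course holds here: take $C$ to be a Hamilton cycle of $K_t$, leaving the $2$-regular $C_m$ outside.) Even reversing the priorities would not rescue the proof as written: you name obstacles (i)--(iii) but never actually carry out the averaging, and there is no reason a suitable swap vertex $c$ should exist.

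The paper's proof takes a completely different route and sidesteps all of this. It first proves, by induction on $|G|$, an auxiliary lemma: if $A,B\subseteq V(G)$ satisfy $|A|\ge\tfrac12|G|$ and $|B|>\tfrac12|G|$, then there is an $A$--$B$ path $P$ with $\Delta(G\setminus P)\le\tfrac12|G\setminus P|$. The induction step just picks any vertex $v$ with $d(v)\ge\tfrac12|G|$, deletes it (possibly together with one neighbour), applies the induction hypothesis to the smaller graph with appropriately shifted sets $A',B'$, and prepends $v$ to the resulting path. The theorem then follows in one line: if $\Delta(G)\le\tfrac12|G|$ take $C=\emptyset$; otherwise pick $v$ with $d(v)>\tfrac12|G|$, apply the lemma to $G-v$ with $A=B=N(v)$, and close the resulting path through $v$. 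No connectivity assumption is needed and no cycle is ever ``modified'' --- the path is built up one high-degree vertex at a time.
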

The proof of this theorem illustrates the main idea of the proof of Theorem~\ref{CycleMinDegree}.
The key idea is to consider the following lemma which, unlike Theorem~\ref{CycleMinDegreeSimple}, can be proved by induction.
\begin{lemma} \label{PathMinDegreeSimple}
Let $G$ be a graph, and  $A$ and $B$ be sets of vertices in $G$ such that $|A|\geq \frac{1}{2}|G|$ and $|B|> \frac{1}{2}|G|$ both hold. There there is a path $P$ between a vertex in $A$ and a vertex in $B$ such that we have 
\begin{equation}\label{PathDegreeSimple}
\Delta(G\setminus P) \leq \frac{1}{2}|G\setminus P|.
\end{equation}
\end{lemma}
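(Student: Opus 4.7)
I would prove Lemma~\ref{PathMinDegreeSimple} by induction on $|V(G)|$. The base case $|G|=1$ is immediate: both density assumptions force $A$ and $B$ to contain the unique vertex, and the single-vertex path $P$ leaves $G\setminus P$ empty, so~(\ref{PathDegreeSimple}) holds vacuously.

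For the inductive step, first dispose of the easy regime. If $\Delta(G)\leq \tfrac{1}{2}(|G|-1)$, then since $|A|+|B|>|G|$ we have $A\cap B\neq\emptyset$; picking any $v\in A\cap B$ and taking $P=\{v\}$ gives $\Delta(G\setminus P)\leq \Delta(G)\leq \tfrac{1}{2}(|G|-1)=\tfrac{1}{2}|G\setminus P|$. So assume some vertex $v$ has $d_G(v)\geq \tfrac{1}{2}|G|$. The main strategy is to \emph{force $v$ onto the path}, which neutralises the hardest vertex (a vertex on $P$ contributes nothing to $\Delta(G\setminus P)$).

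If $v\in A$, I would apply the inductive hypothesis to $G-v$ with $A':=N_G(v)$ and $B':=B$. The density conditions transfer because $|A'|\geq \tfrac{1}{2}|G|>\tfrac{1}{2}|G-v|$ and, using the strict inequality, $|B'|>\tfrac{1}{2}|G|>\tfrac{1}{2}|G-v|$. Prepending $v$ to the returned path $P'$ gives an $A$-to-$B$ path $P$ in $G$ with $V(P)=V(P')\cup\{v\}$, so $G\setminus P=(G-v)\setminus P'$ as induced subgraphs, and~(\ref{PathDegreeSimple}) transfers directly. The case $v\in B$ is symmetric (apply induction with $A':=A$, $B':=N_G(v)$ and append $v$).

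The genuinely hard case is when every vertex of degree $\geq \tfrac{1}{2}|G|$ lies outside $A\cup B$. Here I would apply induction to $G-v$ with the original sets $A,B$ (the density conditions still hold in $G-v$) and then splice $v$ into the returned path $P'=p_1\dots p_k$. Since $d_G(v)\geq \tfrac{1}{2}|G|$, either many neighbours of $v$ lie on $P'$---in which case a pigeonhole on $V(P')$ produces two \emph{consecutive} vertices $p_i,p_{i+1}$ both adjacent to $v$, allowing the splice $P=p_1\dots p_iv p_{i+1}\dots p_k$---or most neighbours of $v$ lie outside $P'$, in which case one must control $d_{G\setminus P'}(v)$ and the degrees of former neighbours of $v$ in $G\setminus P'$ directly. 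Pushing this dichotomy through while respecting parity, integrality, and the endpoint constraints simultaneously is the main obstacle, and is precisely where the asymmetric hypothesis $|A|\geq \tfrac{1}{2}|G|$ versus $|B|>\tfrac{1}{2}|G|$ is exploited: the strict inequality buys the $\tfrac{1}{2}$-unit of slack needed when transferring bounds between $G$ and $G-v$. This same obstacle, in amplified form, is what the proof of Theorem~\ref{CycleMinDegree} will also have to confront.
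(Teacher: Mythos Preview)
Your induction scheme matches the paper in spirit, but there is one real gap and one minor one.

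\textbf{Minor issue.} In the case $v\in A$, you set $B':=B$ in $G-v$ and claim $|B'|>\tfrac12|G|$. This silently assumes $v\notin B$. If $v\in A\cap B$ and $|G|$ is odd, you can have $|B|=\tfrac12(|G|+1)$ and hence $|B\setminus\{v\}|=\tfrac12(|G|-1)$, which is \emph{not} strictly larger than $\tfrac12|G-v|$. The fix is to swap roles: let $N(v)$ play the strict-inequality slot (since $|N(v)|\ge\tfrac12|G|>\tfrac12|G-v|$) and let $B-v$ play the weak-inequality slot. This is exactly what the paper does: in its case (i) it sets $A'=B-v$ and $B'=N(v)$.

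\textbf{Main gap.} Your plan for $v\notin A\cup B$ --- recurse on $G-v$ with the original $A,B$, then splice $v$ into the returned path $P'$ --- is not a proof, and you acknowledge this. The splice branch is genuinely problematic: even if $v$ cannot be inserted, simply taking $P=P'$ fails, because adding $v$ back to $G\setminus P'$ can raise the degree of each neighbour $u\sim v$ from $\tfrac12|(G-v)\setminus P'|$ to $\tfrac12|(G-v)\setminus P'|+1=\tfrac12(|G\setminus P'|+1)$, which exceeds $\tfrac12|G\setminus P'|$ when $|G\setminus P'|$ is odd. So the ``control degrees directly'' branch must do real work, and no mechanism is offered.

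The paper avoids this entirely with a cleaner idea: since $|N(v)|+|A|\ge|G|>|G-v|$ and both sets lie in $G-v$, there exists $u\in N(v)\cap A$. Remove \emph{both} $u$ and $v$, and recurse on $G'=G-u-v$ with $A'=N(v)-u$ and $B'=B-u$; the size conditions become $|A'|\ge\tfrac12|G'|$ and $|B'|>\tfrac12|G'|$ exactly. Then prepend the edge $uv$ to the returned path $P'$ (which starts in $N(v)$), obtaining $P=u\,v\,P'$ from $A$ to $B$ with $G\setminus P=G'\setminus P'$. No splicing, no parity juggling.

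So the missing idea is: when $v$ is outside $A\cup B$, peel off two vertices (the edge $uv$ with $u\in N(v)\cap A$) rather than one.
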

\begin{proof}
The proof is by induction.   The lemma holds trivially when $|G|\leq 2$ (taking $P$ to be any vertex in $A\cap B$ which is non-empty since $|A|+|B|>|G|$).  Suppose that the lemma holds for all graphs of order less than $n$. Let $G$ be a graph of order $n$.

If $\Delta(G)\leq\frac{1}{2}(|G|-1)$, then the lemma holds by choosing $P$ to be any vertex in $A\cap B$.
Therefore, we can suppose that there is a vertex $v\in G$ such that $d(v)\geq\frac{1}{2}|G|$.

We define a subgraph $G'$ of $G$, and two sets  $A'$ and $B'\subseteq V(G')$ as follows.
\begin{enumerate}[\normalfont(i)]
 \item If $v\in A$, let $G'=G-v$, $A'=B-v$, and $B'=N(v)$.
 \item If $v\not\in A$ and $v\in B$, let $G'=G-v$, $A'=A$, and $B'=N(v)$.
 \item If $v\not\in A, B$, then notice that $|N(v)+v|+|A|\geq |G|+1$ implies that $N(v)\cap A$ contains some vertex $u$.  Let $G'=G-v-u$, $A'=N(v)-u$, and $B'=B-u$.
\end{enumerate}
Notice that in all three cases, we have $|A'|\geq \frac{1}{2}|G'|$ and $|B'|> \frac{1}{2}|G'|$.  By induction, there there is a path $P'$ in $G'$ starting in $A'$ and ending in $B'$ such that we have $\Delta(G'\setminus P') \leq \frac{1}{2}|G'\setminus P'|$.  
If we are in case (i) or  (ii) the the path $P'+v$  satisfies the lemma.
If we are in case  (iii) the the path $u+v+P'$  satisfies the lemma.
\end{proof}

We now prove the main result of this section.
\begin{proof}[Proof of Theorem~\ref{CycleMinDegreeSimple}]
If $\Delta(G)\leq \frac{1}{2}|G|$, then this holds by taking $C=\emptyset$.  Otherwise there is a vertex $v$ of degree greater than $\frac{1}{2}|G|$ in $G$.  We can apply Lemma~\ref{PathMinDegreeSimple} to  $G-v$ with $A=B=N(v)$. This gives us a path $P$ from $N(v)$ to $N(v)$ such that $P+v$ is a cycle and $\Delta(G\setminus (P+v)) \leq \frac{1}{2}|G\setminus (P+v)|$.  
We can close the path using the vertex $v$ to obtain a cycle $C$ satisfying $\Delta(G\setminus C) \leq \frac{1}{2}|G\setminus C|$. 
\end{proof}

The full proof of Theorem~\ref{CycleMinDegree} is similar since it also uses a version of Lemma~\ref{PathMinDegreeSimple}.  The difficulty in proving Theorem~\ref{CycleMinDegree} comes from the fact that Lemma~\ref{PathMinDegreeSimple} does not hold with (\ref{pathdegree}) replaced by $\Delta(G\setminus P) \leq \frac{1}{2}\left(|G\setminus P|-1\right)$ 
To see this, consider a graph $G$  consisting of a copy of $K_m$ and a disjoint $K_{m+1}^-$ and no other edges (here, $K_{m+1}^-$ denoted $K_{m+1}$ with an edge.) Let $A=V(K_m)+u$  and $B=V(K_m)+v$ where $uv$ is the edge which was deleted from $K_{m+1}$. It is easy to check that $\Delta(G\setminus P) \geq \frac{1}{2}|G\setminus P|$ holds for any path $P$ going from $A$ to $B$.

Lemma~\ref{PathMinDegreeSimple} being best possible is the barrier to generalizing the above proof to get Theorem~\ref{CycleMinDegree}. 
This barrier is overcome in the next section by classifying the extremal graphs for Lemma~\ref{CycleMinDegreeSimple}.

\section{Partitioning general graphs}\label{SectionCycleMinDegreeDisconnected}
In this section we prove Theorem~\ref{CycleMinDegree}.
It is recommended that the reader familiarise themselves with the proof of Theorem~\ref{CycleMinDegreeSimple} before reading the proof in this section.

The main step in the proof of Theorem~\ref{CycleMinDegree} will be to prove a version of Lemma~\ref{PathMinDegreeSimple} with the identity (\ref{PathDegreeSimple}) replaced by  $\Delta(G\setminus P) \leq \frac{1}{2}\left(|G\setminus P|-1\right)$.
Before we can state the lemma which we will prove we will need some notation.

\begin{definition}\label{DefinitionBalancedComponents}
We say that a graph $G$ has {\bf balanced components} if $V(G)$ can be partitioned into two sets $X$ and $Y$ such that $X$ and $Y$ have no edges between them and $|X|=\left\lfloor {|G|}/{2} \right\rfloor$, $|Y|=\left\lceil {|G|}/{2} \right\rceil$.
\end{definition}

A graph with no vertices is denoted by $\emptyset$. 
For convenience, throughout this section we set $\Delta(\emptyset)=\delta(\emptyset)=-\frac{1}{2}$.  
Though a bit unintuitive, this notation is actually extremely natural---it is the only assignment of values to $\Delta(\emptyset)$ and $\delta(\emptyset)$ which ensures that we have $\overline \emptyset=\emptyset$,  $\delta(G)+\Delta(\overline{G})=|G|-1$, and $\Delta(G)\geq \delta (G)$ for all graphs $G$.
For our purposes, it also it also ensures that we have $\Delta(\emptyset)\leq \frac{1}{2}(|\emptyset|-1)$.
Therefore, with this convention, the possibility of $C$ being empty does not need to be stated explicitly in the statement of Theorem~\ref{CycleMinDegree}.  Also, some of our proofs are simplified with this notation, since without it the case when $G\setminus C=\emptyset$ would have to be dealt with separately from the main proof.

The following lemma is the crucial step of the proof of Theorem~\ref{CycleMinDegree}.
\begin{lemma} \label{mainlemma}
Let $G$ be a graph on at least two vertices which does not have balanced components.  Let $A$ and $B$ be sets of vertices in $G$ such that $|A|\geq \frac{1}{2}(|G|-1)$ and $|B|\geq \frac{1}{2}|G|$ both hold. Then there is a path $P$ starting in $A$ and ending in $B$ such that we have 
\begin{equation}\label{pathdegree}
\Delta(G\setminus P) \leq \frac{1}{2}(|G\setminus P|-1).
\end{equation}
\end{lemma}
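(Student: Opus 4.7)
The plan is to induct on $|G|$ in the same style as the proof of Lemma~\ref{PathMinDegreeSimple}, where the extra ``no balanced components'' hypothesis plays the role of forbidding exactly the obstruction described at the end of Section~\ref{SectionCycleMinDegreeSimple}. After handling small cases and the easy regime $\Delta(G)\leq \frac{1}{2}(|G|-1)$---where one takes $P$ to be a single vertex of $A\cap B$, or (if that intersection is empty, which can happen here because $|A|+|B|$ is only $\geq |G|-\tfrac12$) an edge from $A$ to $B$ forced to exist by the no-balanced-components hypothesis---one picks a vertex $v$ with $d(v)\geq \lceil |G|/2\rceil$ and performs the same three-case reduction as in Lemma~\ref{PathMinDegreeSimple}, obtaining a smaller graph $G'$ (of order $|G|-1$ or $|G|-2$) with updated sets $A'$, $B'$. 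A direct count shows the strengthened bounds $|A'|\geq \frac{1}{2}(|G'|-1)$ and $|B'|\geq \frac{1}{2}|G'|$ still hold.

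If $G'$ has no balanced components, induction yields a path $P'$ in $G'$ with $\Delta(G'\setminus P')\leq \frac{1}{2}(|G'\setminus P'|-1)$, and attaching $v$ (and in case (iii) also $u$) to $P'$ produces the required $P$ in $G$, since $G\setminus P = G'\setminus P'$ and hence the max-degree bound transfers verbatim.

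The main obstacle is the remaining case: $G'$ has balanced components $(X,Y)$ while $G$ itself does not. Here the two hypotheses together pin down $G$'s structure rigidly---all $X$--$Y$ edges must pass through the deleted vertices, and a parity/degree count forces $v$ (and in case (iii) at least one of $v,u$) to have neighbours on both sides. Instead of applying induction to $G'$ as a whole, I would apply the lemma recursively to the two smaller subgraphs $G[X\cup\{v\}]$ and $G[Y\cup\{v\}]$ (or the appropriate $u$-augmentations) to produce two half-paths terminating at $v$, each handling one side with endpoints appropriately placed in $A$ and $B$. Splicing them at $v$ yields a single $A$-to-$B$ path $P$ in $G$; because no edges of $G$ join $X\setminus\{v,u\}$ to $Y\setminus\{v,u\}$, the graph $G\setminus P$ decomposes into two parts with no edges between them, so $\Delta(G\setminus P)$ equals the maximum of $\Delta$ over the two parts, and each is already bounded by the recursive calls.

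The hard part will be orchestrating these recursive calls correctly: choosing the auxiliary sets $A'',B''$ inside each half so that the size hypotheses $|A''|\geq \frac{1}{2}(|H|-1)$ and $|B''|\geq \frac{1}{2}|H|$ hold in the induced subgraph $H$, and verifying that each half either lacks balanced components or can itself be dealt with by a further round of the same structural argument. This is precisely the step where the classification of extremal graphs for Lemma~\ref{CycleMinDegreeSimple}, as alluded to at the end of Section~\ref{SectionCycleMinDegreeSimple}, becomes essential: one must show that after one level of splitting the ``balanced components'' obstruction cannot reappear unboundedly often, so the recursion terminates with a path whose complement has the required maximum degree.
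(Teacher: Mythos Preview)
Your inductive framework matches the paper's up to the pivotal case where $G'$ has balanced components, but your treatment of that case is only a sketch with concrete obstructions you have not resolved. To splice two half-paths at $v$ you would need to recurse into $G[X]$ and $G[Y]$ with endpoint sets such as $N(v)\cap X$ and $A\cap X$ satisfying the size hypotheses of the lemma; but $N(v)$ may lie almost entirely on one side (so $|N(v)\cap X|$ can be as small as $1$), and $A$ or $B$ may be contained entirely in $Y\cup\{v\}$ (so $A\cap X=\emptyset$). Moreover each half could itself have balanced components, and you give no termination argument beyond asserting that the obstruction ``cannot reappear unboundedly often''. There is also a smaller slip in your ``easy regime'': when $|G|$ is odd and $\Delta(G)=\tfrac12(|G|-1)$ exactly, removing a single vertex of $A\cap B$ can leave $\Delta(G\setminus P)=\tfrac12|G\setminus P|$, which violates~\eqref{pathdegree}; the paper avoids this by placing such $G$ into the inductive case (its Case~1 threshold is $d(x)\ge\tfrac12(|G|-1)$, not $d(x)\ge\lceil|G|/2\rceil$), and by inserting a further inductive step in Case~2 when some vertex has degree exactly $\tfrac12|G|-1$.

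The paper does not recurse into the halves at all. Instead it proves two standalone auxiliary results (Lemmas~\ref{1vertex} and~\ref{2vertices}) which, via lengthy case analysis, directly exhibit a short path $P$ (of length at most about $8$) whenever $G\setminus\{v\}$ or $G\setminus\{u,v\}$ has balanced components, arranged so that $G\setminus P$ matches one of four explicit structures catalogued in Lemma~\ref{LemmaLowDegreeDegreeStructures}, each of which automatically satisfies $\Delta(H)\le\tfrac12(|H|-1)$. So the ``classification of extremal graphs'' you allude to is not used to control a recursion---it replaces recursion entirely in the balanced-components case.
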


The following lemma gives some simple examples of graphs $H$ with $\Delta(H)\leq \frac{1}2(|H|-1)$.
\begin{lemma}\label{LemmaLowDegreeDegreeStructures}
Suppose that $H$ is a graph with one of the following structures.
\begin{enumerate}[(i)]
\item $H$ has balanced components.
\item $V(H)$ has a partition into two sets $X$ and $Y$ such that there are no edges between $X$ and $Y$,  $|X|\leq |Y|\leq |X|+3$, and $\Delta(Y)<|Y|-1$.
\item $|H|$ is odd, and $H$ has a vertex $v$ with $d(v)\leq(|H|-1)/2$ such that $H-v$ has balanced components.
\item  $V(H)$ has a partition into nonempty $X, Y_1, Y_2,$ and $\{v\}$ such that there are no edges between any of $X, Y_1$, and $Y_2$, $N(v)\subseteq X\cup Y_1$, $|X|+2=|Y_1|+|Y+2|$, and $d(v)\leq(|H|-1)/2$.
\end{enumerate}
Then $\Delta(H)\leq \frac{1}2(|H|-1)$.
\end{lemma}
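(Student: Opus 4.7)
The plan is to prove each of the four cases separately by a straightforward degree count. In each structure, the vertex set is partitioned into parts with prescribed non-adjacencies, and for every vertex $v$ I just need to bound $d(v)$ by $\tfrac12(|H|-1)$ using those non-adjacencies and the stated size constraints. Nothing more subtle is required.

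For case (i) every vertex lies in a component of size $\lfloor |H|/2 \rfloor$ or $\lceil |H|/2 \rceil$, so $d(v) \leq \lceil |H|/2\rceil - 1 \leq \tfrac12(|H|-1)$ after splitting into parity cases. For case (ii) a vertex $v \in X$ has $d(v) \leq |X|-1$ and a vertex $v \in Y$ has $d(v) \leq |Y|-2$ (here I use $\Delta(Y) < |Y|-1$). The two needed inequalities rearrange to $|X| \leq |Y|+1$ and $|Y| \leq |X|+3$ respectively, both of which hold by hypothesis.

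For case (iii) write $|H| = 2k+1$. The bound on $v$ is given, and every other vertex $u$ lies in one of the two components of $H - v$, each of size $k$, so $d_H(u) \leq (k-1)+1 = k = \tfrac12(|H|-1)$, where the $+1$ accounts for a possible edge to $v$. For case (iv), combining $|Y_1|+|Y_2| = |X|+2$ with $|H| = |X|+|Y_1|+|Y_2|+1$ gives $|H| = 2|X|+3$, hence $\tfrac12(|H|-1) = |X|+1$. A vertex in $X$ then has degree at most $(|X|-1)+1 = |X|$; a vertex in $Y_1$ has degree at most $(|Y_1|-1)+1 = |Y_1|$, which is at most $|X|+1$ since $|Y_2| \geq 1$; a vertex in $Y_2$ has degree at most $|Y_2|-1$ (note $v$ is not a neighbor because $N(v) \subseteq X \cup Y_1$), and $|Y_2|-1 \leq |X|+1$ since $|Y_1|\geq 1$. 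Finally $d(v) \leq \tfrac12(|H|-1)$ by hypothesis.

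There is no real obstacle here: the lemma amounts to four small pieces of arithmetic. The only points that require a moment of thought are that in case~(iv) the non-emptiness of $Y_1$ and $Y_2$ is used exactly to pin down both $|Y_1|$ and $|Y_2|$ at $|X|+1$ from above, and that the convention $\Delta(\emptyset) = -\tfrac12$ introduced just above lets the bound hold vacuously should any of the parts degenerate to the empty graph in the statement's degree inequalities.
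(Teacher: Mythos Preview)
Your proposal is correct and follows essentially the same approach as the paper's own proof: in each of the four cases you do the same straightforward degree count, bounding the degree of a vertex in each part by the size of its part (minus one or two as appropriate) and then checking the resulting inequality against $\tfrac12(|H|-1)$ using the stated size constraints. The only differences are cosmetic---you spell out the rearranged inequalities in case~(ii) and the parity split in case~(i) a bit more explicitly than the paper does.
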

\begin{proof}
\begin{enumerate}[(i)]
\item Let $V(H)$ have a partition into balanced components $X$ and $Y$ as in Defintion~\ref{DefinitionBalancedComponents}. Vertices in $X$ have degree $\leq |X|-1\leq |H|/2-1$. Vertices in $Y$ have degree $\leq |Y|-1\leq (|H|+1)/2-1$.
\item Vertices in $X$ have degree $\leq |X|-1\leq |H|/2-1$. Vertices in $Y$ have degree $\leq |Y|-2\leq |Y|/2+(|X|+3)/2-2=(|H|-1)/2$.
\item Let $V(H)-v$ have a partition into balanced components $X$ and $Y$ as in Defintion~\ref{DefinitionBalancedComponents}. Since $|H|$ is odd, we have $|X|=|Y|=(|H|-1)/2$
Vertices  in $X$ have degree $\leq |X|\leq (|H|-1)/2$. Similarly vertices  in $Y$ have degree $\leq |Y|\leq (|H|-1)/2$.
\item Vertices in $X$ have degree $\leq |X|= (|H|-3)/2$. Since $Y_2$ is nonempty, vertices in $Y_1$ have degree $\leq |Y_1|\leq |X|+1\leq (|H|-1)/2$. Since $Y_1$ is nonempty, vertices in $Y_2$ have degree $\leq |Y_2|-1\leq |X|\leq (|H|-3)/2$. 
\end{enumerate}
\end{proof}

The proof of Lemma~\ref{mainlemma} is by an induction similar to the proof of Lemma~\ref{PathMinDegreeSimple}.  However the condition of ``$G$ does not have balanced components'' in the statement of Lemma~\ref{mainlemma} makes the proof substantially more involved.   This is because proving the ``initial case'' of the induction now requires verifying Lemma~\ref{mainlemma} whenever $G$ is in some sense ``close to having balanced components''.
This is performed in the following two lemmas.

\begin{figure}
  \centering
    \includegraphics[width=0.65\textwidth]{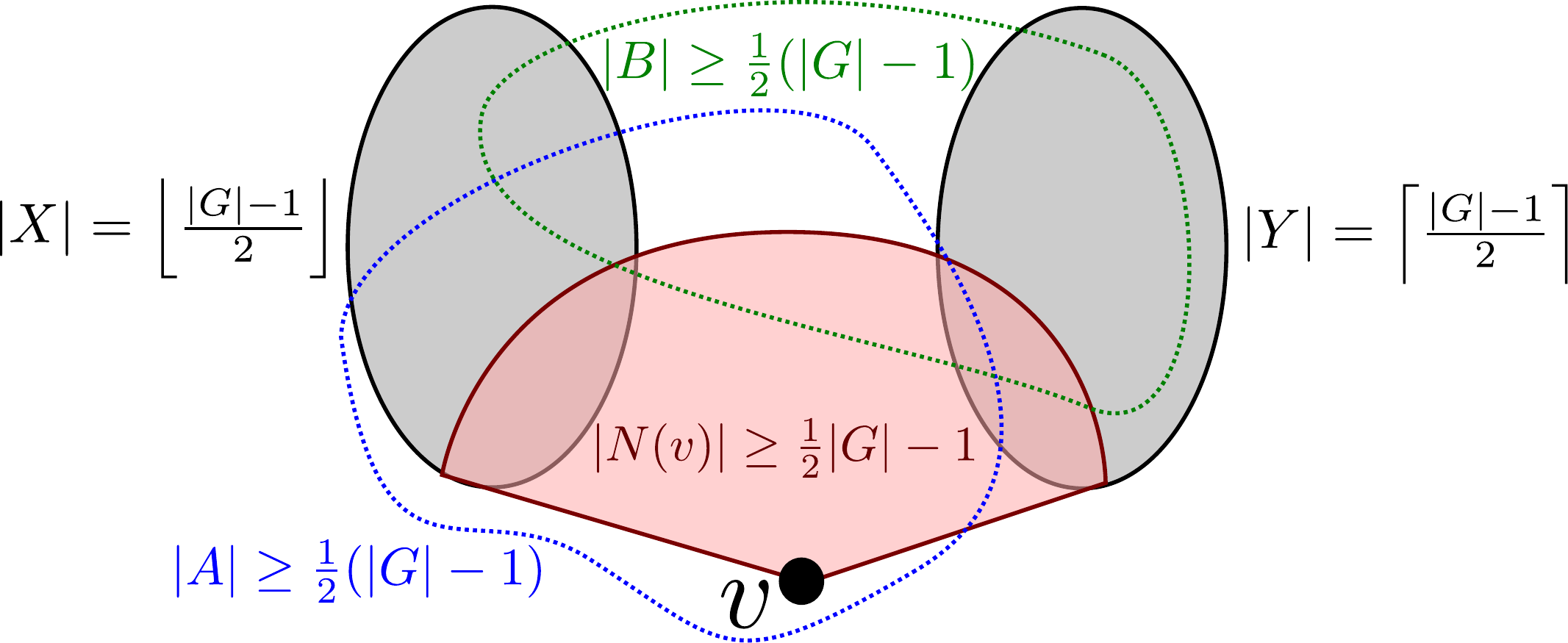}
  \caption{A diagram of the set up for Lemma~\ref{1vertex}.
  }\label{FigureLemma1Vertex}
\end{figure}
\begin{lemma}\label{1vertex}
Let $G$ be a graph of order at least $4$ which does not have balanced components.  Suppose that $G$ has a vertex $v$ such that $G\setminus\{v\}$ has balanced components and $d(v)\geq\frac{1}{2}|G|-1$.  Let $A$, $B\subseteq V(G)$ such that $|A|\geq \frac{1}{2}(|G|-1)$, $|B|\geq \frac{1}{2}(|G|-1)$ and $v \in A $ all hold.  Then there is a path $P$ starting in $A$ and ending in $B$ such that we have 
\begin{equation}\label{pathdegree1vertex}
\Delta(G\setminus P) \leq \frac{1}{2}(|G\setminus P|-1).
\end{equation} 
\end{lemma}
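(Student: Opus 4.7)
Let $X \sqcup Y$ be the balanced components of $G - v$ chosen with $|X| \leq |Y|$, and set $n = |G|$. A first observation is that since $G$ itself does not have balanced components, $v$ must have at least one neighbour in $Y$ (otherwise $(X \cup \{v\}) \sqcup Y$ would be a balanced partition of $G$), and when $n$ is odd it must also have a neighbour in $X$. The basic strategy is to build a short path $P$ from $A$ to $B$ passing through $v$, chosen so that $G \setminus P$ falls into one of the four structural patterns in Lemma~\ref{LemmaLowDegreeDegreeStructures}, giving $\Delta(G\setminus P)\leq \frac{1}{2}(|G\setminus P|-1)$ automatically.

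\textbf{Simple cases.} If $v \in B$ I would take $P = v$: then $G \setminus P = G - v$ has balanced components by hypothesis, so clause (i) finishes. Assuming $v \notin B$, look for $b \in N(v) \cap B$ and try $P = vb$. When $b \in Y$, or when $n$ is odd and $b \in X$, the complement $G \setminus P$ again has balanced components and clause (i) applies. The awkward sub-case is $n$ even with $b$ forced into $X$, where deletion unbalances the partition by two; here I would either finish via clause (ii) when $Y$ is not a clique, or prepend a neighbour $y^* \in A \cap N(v) \cap Y$ to form the length-$2$ path $y^* v b$, whose complement has components of sizes $\lfloor (n-3)/2\rfloor$ and $\lceil (n-3)/2\rceil$ and is therefore balanced.

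\textbf{Hard case and main obstacle.} The real difficulty is when $v \notin B$ and $N(v) \cap B = \emptyset$. Then $d(v)\geq \frac{n}{2}-1$ and $|B|\geq \frac{n-1}{2}$ combined with the disjointness force $N(v)$ and $B$ to partition $V(G)-v$ exactly, with $|N(v)|=|B|=\lceil (n-1)/2\rceil$. I would then search for a length-$3$ path of the form $P=x_1 v y_1 y_2$ with $x_1\in A\cap N(v)\cap X$, $y_1\in N(v)\cap Y$, and $y_2\in B\cap Y$ adjacent to $y_1$, along with its mirror image obtained by swapping the roles of $X,Y$ or of $A,B$. The complement $G\setminus P$ then has components of sizes roughly $(n-3)/2$ and $(n-5)/2$ for odd $n$ and $(n-4)/2$ and $(n-4)/2$ for even $n$, both balanced, finishing again via clause (i). The main obstacle will be guaranteeing that all the required endpoints and edges exist simultaneously, which I expect to handle by double-counting against the tight equalities $|N(v)|=|B|=\lceil (n-1)/2\rceil$ in each of $X$ and $Y$ separately. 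Any residual configuration where no such path can be produced should force $G\setminus P$ for a suitable $P$ into one of the almost-balanced structures described by clauses (iii) or (iv) of Lemma~\ref{LemmaLowDegreeDegreeStructures}, which are set up precisely to cover these off-by-one corner cases.
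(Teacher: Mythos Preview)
Your overall strategy matches the paper's exactly: build short paths through $v$ and invoke Lemma~\ref{LemmaLowDegreeDegreeStructures} on the complement. The opening moves ($P=v$ when $v\in B$; $P=vb$ when $b\in N(v)\cap B$ lands favourably) are also the same. The difficulty is that two of your proposed path constructions do not always exist, and the missing idea is the same in both places.

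In the ``awkward sub-case'' ($n$ even, $b\in X$, $Y$ a clique) you want $y^*\in A\cap N(v)\cap Y$. But $A\cap Y$ and $N(v)\cap Y$ can both be nonempty while their intersection is empty; for instance with $|X|=3$, $|Y|=4$, $N(v)\cap Y=\{y_1\}$, $A\cap Y=\{y_3,y_4\}$, no such $y^*$ exists. The paper's fix is to exploit the very assumption that $Y$ is a clique: pick any vertex $y$ of degree $|Y|-1$ in $G[Y]$ and use it as a hub, producing a path like $a_y\, y\, v_y\, v\, b$ that routes from an arbitrary $a_y\in A\cap Y$ to an arbitrary $v_y\in N(v)\cap Y$ through $y$. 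A further sub-split is then needed when $A\cap Y=\emptyset$ (forcing $A=X+v$, so $b\in A$, and one swaps the roles of the two ends). The same hub trick is what makes the hard case ($N(v)\cap B=\emptyset$) go through: your path $x_1 v y_1 y_2$ needs $x_1\in A\cap N(v)\cap X$ and an edge $y_1y_2$ with $y_1\in N(v)\cap Y$, $y_2\in B\cap Y$, and again these simultaneous adjacencies need not exist. The paper instead branches on whether $\Delta(X)$ or $\Delta(Y)$ equals its maximum; when one does, it uses the full-degree vertex as a connector, and when neither does, clause~(ii) applies directly to a shorter path. Double-counting against $|N(v)|=|B|=\lceil (n-1)/2\rceil$ will not manufacture the missing adjacencies on its own; you need this hub-vertex manoeuvre (or something equivalent) to complete the argument.
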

%\textbf{Specify difference between $\Delta(X)$ and $\Delta(G[X])$.}
\begin{proof}
See Figure~\ref{FigureLemma1Vertex} for a diagram of the graph $G$ and the sets $A$ and $B$ for this lemma.
Let $X$ and $Y$ be two sets partitioning $G\setminus\{v\}$ as in the definition of having balanced components.  Note that if $v \in B$ then the lemma follows by taking $P=\{v\}$, since then $G\setminus P$ has balanced components and so satisfies (\ref{pathdegree1vertex}). Therefore, we can suppose for the remainder of of proof that  we have $v \not\in B$.
We split into two cases depending on whether $|G|$ is odd or even.

Suppose that $|G|$ is odd.   In this case we have $|X|=|Y|=\frac{1}{2}(|G|-1)$.  Since $G$ does not have balanced components, $v$ must have neighbours in both $X$ and $Y$. We consider two cases depending on whether $B\cap N(v)$ is empty or not.

{\bf Case 1:}  Suppose that we have $B\cap N(v)\neq \emptyset$.  Let $v_b$ be a vertex in $B\cap N(v)$.  We can let $P= v v_b$ in order to obtain a path such that $G\setminus P$ has balanced components and so satisfies~(\ref{pathdegree1vertex}).

{\bf Case 2:}  Suppose that we have $B\cap N(v) = \emptyset$.
In this case $v\neq B$ gives $|B|+|N(v)+v|\geq |G|-\frac{1}{2}$. This, together with the integrality of $|B|$ and $|N(v)+v|$ implies that we must have $|B|=\frac{1}{2}(|G|-1)$ and $|N(v) + v|=\frac{1}{2}(|G|+1)$.  Thus $B$ and $N(v)+v$ partition $G$.  The fact that $N(v)\cap X\neq \emptyset$, and $N(v)\cap Y\neq \emptyset$ implies that $B\cap X \neq \emptyset$ and $B\cap Y \neq \emptyset$.  The assumption that $G$ does not have a partition into balanced components $B$ and $N(v)+v$  implies that there is an edge between $N(v)$ and either $B\cap X$ or $B\cap Y$.  Notice that so far, the roles of $X$ and $Y$ in the proof of this case have been identical.  Therefore, without loss of generality, we may assume that there is an edge between some vertices $u_x\in N(v)\cap X$ and  $b_x\in B\cap X$. We split into two cases depending on the values of $\Delta(X)$ and $\Delta(Y)$.

{\bf Case 2.1:} Suppose that we have either $\Delta(X)<|X|-1$ or $\Delta(Y)<|Y|-1$.
 If $\Delta(Y)<|Y|-1$ holds, then $P=v u_x b_x$ is a path with $G\setminus P$  having structure (ii) of Lemma~\ref{LemmaLowDegreeDegreeStructures} and so satisfies (\ref{pathdegree2vertices}).
 
%as a consequence of $$\Delta(G\setminus P)\leq \Delta(Y)<|Y|-1=\frac{1}{2}(|G|-3)=\frac{1}{2}|G\setminus P|.$$
If $\Delta(Y)=|Y|-1$ then $Y$ is connected and so there must be an edge between some vertices $b_y \in B\cap Y$ and $u_y \in N(v)\cap Y$. In this case, by the assumption of Case 2.1, we must also  have  $\Delta(X)<|X|-1$.  Therefore  $P=v u_y b_y$ is a path with $G\setminus P$  having structure (ii) of Lemma~\ref{LemmaLowDegreeDegreeStructures} and so satisfies (\ref{pathdegree2vertices}).
%as a consequence of $$\Delta(G\setminus P)\leq \Delta(X)<|X|-1=\frac{1}{2}(|G|-3)=\frac{1}{2}|G\setminus P|.$$

{\bf Case 2.2:}  Suppose that we have both $\Delta(X)=|X|-1$ and $\Delta(Y)=|Y|-1$.
  Then $Y$ must be connected, and so there is an edge between some vertices $u_y\in N(v)\cap Y$ and  $b_y\in B\cap Y$.
  
  Since $|G|\geq 4$, $A$ has at least two vertices, and so either  $A\cap X$ or $A \cap Y$ is not empty.  Without loss of generality, we may assume that $A\cap X$ contains a vertex $a_x$.  Let $x$ be a vertex of degree $|X|-1$ in $G[X]$ and $v_x$ a vertex in $N(v)\cap X$.    Notice that, depending on which of $a_x$, $x$ and $v_x$ are equal, one of the sequences $a_x x v_x v u_y b_y$, $a_x v_x v u_y b_y$ or $a_x v u_y b_y$ forms a path from $A$ to $B$.  Choose $P$ to be this path in order to obtain a path such that  $G\setminus P$ has balanced components and so satisfies~(\ref{pathdegree1vertex}).

\

Suppose that $|G|$ is even.   In this case we have $|X|=\frac{1}{2}|G|-1$ and $|Y|=\frac{1}{2}|G|$. Notice that since $G$ does not have balanced components, $N(v)\cap Y$ contains some vertex, $v_y$.  We consider three cases, depending on which of $N(v)\cap B \cap Y$ or $N(v)\cap B \cap X$ are empty.

{\bf Case 1:}  Suppose that we have $N(v)\cap B\cap Y \neq \emptyset$.  Let $v_b$ be a vertex in $B\cap N(v) \cap Y$.  In this case we can let $P= v v_b$ to obtain a path such that $G\setminus P$ has balanced components and so satisfies (\ref{pathdegree1vertex}).

{\bf Case 2:}  Suppose that we have $N(v)\cap B\cap X \neq \emptyset$.  Let $v_b$ be a vertex in $B\cap N(v) \cap X$.
There are two subcases depending on the whether we have $\Delta(Y)=|Y|-1$ or not.

{\bf Case 2.1:}  Suppose that we have $\Delta(Y)< |Y|-1$.  In this case  $P= v v_b$ is a path with $G\setminus P$  having structure (ii) of Lemma~\ref{LemmaLowDegreeDegreeStructures} and so satisfies (\ref{pathdegree2vertices}).
%as a consequence of $$\Delta(G\setminus P)\leq \Delta(Y)<|Y|-1=\frac{1}{2}(|G|-2)=\frac{1}{2}|G\setminus P|.$$

{\bf Case 2.2:}  Suppose that we have $\Delta(Y)= |Y|-1$.  Let $y$ be a vertex of degree $|Y|-1$ in $G[Y]$.  There are two subcases depending on whether $A\cap Y$ is empty or not.

{\bf Case 2.2.1:} 
Suppose that we have $A\cap Y \neq \emptyset$. Let $a_y$ be a vertex in $A\cap Y$.  Depending on which of $a_y$, $v_y$ and $y$ are equal, one of the sequences $a_y y v_y v v_b$, $a_y v_y v v_b$ or $a_y v v_b$ forms a path from $A$ to $B$.  Choose $P$ to be this path to obtain a path such that  $G\setminus P$ has balanced components and so satisfies (\ref{pathdegree1vertex}).

{\bf Case 2.2.2:} 
Suppose that we have $A\cap Y = \emptyset$.  This implies that $A= X + v$ must hold, so we have $v_b \in A$.  Note that $|B|+ |Y|\geq |G|-\frac{1}{2}$, together with the fact that $v$ is in neither $B$ nor $Y$ means that $B\cap Y$ contains a vertex $b_y$.  Depending on which of $b_y$, $v_y$ and $y$ are equal, one of the sequences $v_b v v_y y b_y$, $v_b v_y y b_y$ or $v_b y b_y$ forms a path from $A$ to $B$.  Choose $P$ to be this path to obtain a path such that  $G\setminus P$ has balanced components and so satisfies (\ref{pathdegree1vertex}).

{\bf Case 3:}  Suppose that we have $N(v)\cap B= \emptyset$.  Note that $|N(v)+v|+|B|\geq |G|-\frac{1}{2}$, together with the integrality of $|N(v)+v|$ and $|B|$ implies that we have $|N(v)+v|=|B|=\frac{1}{2}|G|$. There are two subcases depending on thether there are edges between $B\cap Y$ and $N(v)$ or not.

{\bf Case 3.1:}
If there is an edge $b_y v_y$ between $B\cap Y$ and $N(v)$ then $P=v v_y b_y$ is a path such that $G\setminus P$ has balanced components and so satisfies (\ref{pathdegree1vertex}).

{\bf Case 3.2:}
If there are no edges between $B\cap  Y$ and $N(v)$, then note that since $G$ does not have a partition into balanced components $N(v)+v$ and $B$, there must be an edge $b_x v_x$ between $B\cap X$ and $N(v)$.  
Since $|B|=|Y|=\frac{1}{2}|G|$ and $v \not\in B\cup Y$, we have that $B\cap Y$ is not empty.
Since we have $N(v)\cap Y\neq \emptyset$, $B\cap Y \neq \emptyset$, and there are no edges between $B\cap  Y$ and $N(v)$, we obtain $\Delta(Y)< |Y|-1$.  This implies that  $P = v v_x b_x$ is a path with $G\setminus P$  having structure (ii) of Lemma~\ref{LemmaLowDegreeDegreeStructures} and so satisfies (\ref{pathdegree2vertices}).
%as a consequence of $$\Delta(G\setminus P)\leq \Delta(Y)\leq|Y|-2=\frac{1}{2}(|G|-4)=\frac{1}{2}(|G\setminus P|-1).$$
\end{proof}

\begin{figure}
  \centering
    \includegraphics[width=0.65\textwidth]{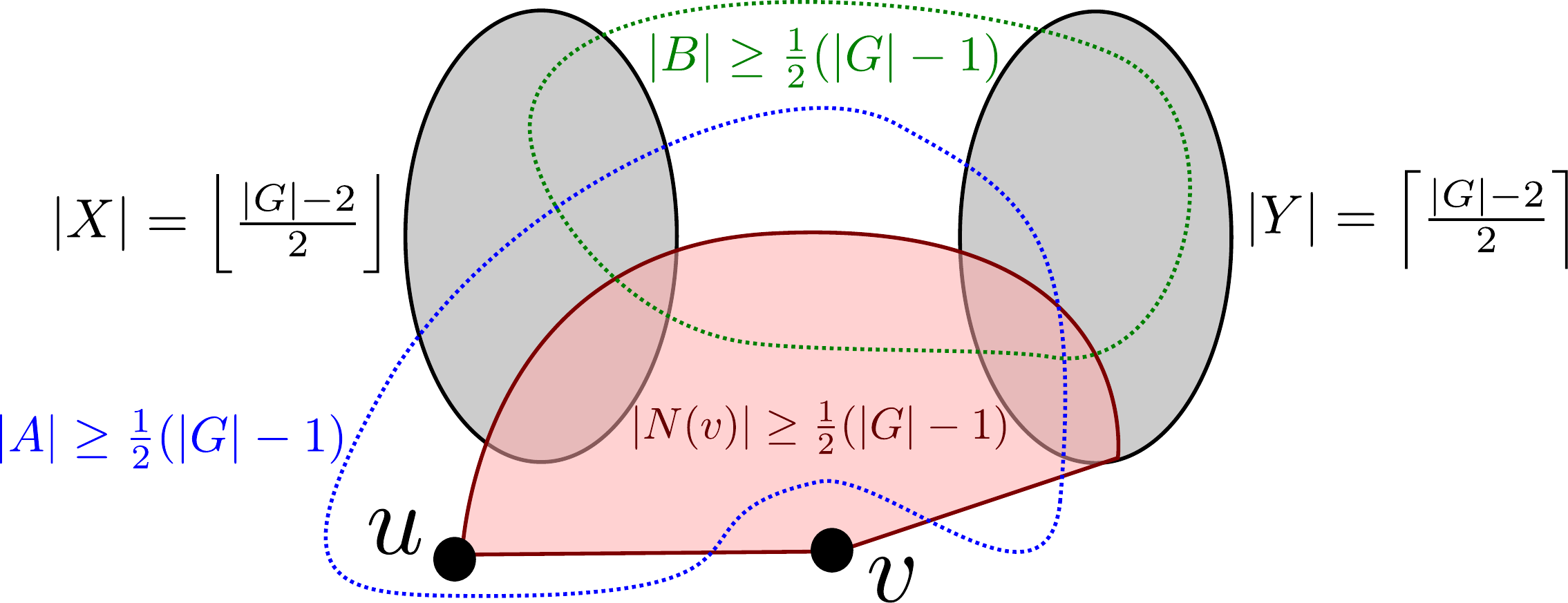}
  \caption{A diagram of the set up for Lemma~\ref{2vertices}.
  }\label{FigureLemma2Vertex}
\end{figure}
The following lemma is similar to Lemma~\ref{1vertex}, except there are now two vertices $u$ and $v$ playing the role $v$ played in Lemma~\ref{1vertex}. 
\begin{lemma}\label{2vertices}
Let $G$ be a graph on at least $4$ vertices which does not have balanced components.  Suppose that  that $G$ has two vertices $u$ and $v$ such that $G\setminus\{u,v\}$ has balanced components, $uv$ is an edge and $d(v)\geq\frac{1}{2}(|G|-1)$.  Let $A$, $B\subseteq V(G)$ such that $|A|, |B|\geq \frac{1}{2}(|G|-1)$, $\max(|A|, |B|)\geq \frac{1}{2}|G|$, $u \in A$ and $v \not\in A\cup B$ all hold.  Then there is a path $P$ starting in $A$ and ending in $B$ such that we have 
\begin{equation}\label{pathdegree2vertices}
\Delta(G\setminus P) \leq \frac{1}{2}(|G\setminus P|-1).
\end{equation} 
\end{lemma}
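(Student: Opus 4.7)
The plan is to follow the strategy of Lemma~\ref{1vertex}, with the extra vertex $u\in A$ prepended to each path. Let $X,Y$ be the balanced components of $G\setminus\{u,v\}$, with $|X|\le|Y|$ (so $|Y|-|X|\le 1$); I then split into cases on the parity of $|G|$. Because $G$ itself has no balanced components, any partition of $V(G)$ obtained by inserting $u$ and $v$ into $X\cup Y$ must fail to be balanced---this gives us leverage exactly at the step where ``balanced components'' is used.

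The main candidate path is $P=u\,v\,v_b$ for a vertex $v_b\in N(v)\cap B$. For this $P$, the partition of $G\setminus P$ induced by $X$ and $Y$ minus $v_b$ has balanced components precisely when $v_b\in Y$ in the odd case, or for any $v_b\in X\cup Y$ in the even case. In those easy subcases~(\ref{pathdegree2vertices}) follows directly from structure~(i) of Lemma~\ref{LemmaLowDegreeDegreeStructures}.

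When $v_b$ lies only on the wrong side (necessarily with $|G|$ odd), I would use the condition $d(v)\ge\tfrac12(|G|-1)$ to find auxiliary vertices inside $N(v)\cap Y$ and---just as in Cases~2.1--2.2 of Lemma~\ref{1vertex}---either lengthen $P$ by inserting $v_y\in N(v)\cap Y$ or $x\in X$ at appropriate positions so that the parity of $|G\setminus P|$ again matches structure~(i), or invoke structure~(ii) of Lemma~\ref{LemmaLowDegreeDegreeStructures} when one of $\Delta(G[X])$, $\Delta(G[Y])$ is strictly less than $|X|-1$, $|Y|-1$ respectively. The hard case is $N(v)\cap B=\emptyset$: here $|B|\ge\tfrac12(|G|-1)$, $d(v)\ge\tfrac12(|G|-1)$, and $v\notin B$ force $B$ and $N(v)\cup\{v\}$ to essentially partition $V(G)$, and the assumption that $G$ itself lacks balanced components supplies an edge $v_xb_x$ with $v_x\in N(v)$ and $b_x\in B$ on the same side of the $X,Y$ partition. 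I would then take $P=u\,v\,v_x\,b_x$ and finish with a parity check, appealing to structure~(ii) of Lemma~\ref{LemmaLowDegreeDegreeStructures} in the residual failure cases exactly as in Case~3.2 of Lemma~\ref{1vertex}.

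The main obstacle I anticipate is the coincidence bookkeeping. Since $u\in A$ is itself a neighbour of $v$, various auxiliary vertices in the arguments above may collide with $u$: for instance, $u=v_b$ when $u\in A\cap B\cap N(v)$, or $u$ may equal the element of $N(v)\cap X$ we wish to insert into $P$. In each such degeneracy the path shortens by a vertex and one must recheck which structure of Lemma~\ref{LemmaLowDegreeDegreeStructures} the residual graph $G\setminus P$ now satisfies; sometimes structure~(iii) or~(iv) rather than~(i) or~(ii) is needed. Combined with the parity split and the subcases coming from $|A|=|B|=\tfrac12(|G|-1)$ versus $\max(|A|,|B|)=\tfrac12|G|$, the total amount of casework roughly doubles relative to Lemma~\ref{1vertex}; a small-order check for $|G|=4$ (where $|X|=|Y|=1$) also has to be done by hand.
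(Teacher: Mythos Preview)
Your plan is essentially the paper's proof: the parity split, the primary path $P=u\,v\,v_b$, the hard subcase $N(v)\cap B\subseteq\{u\}$ handled via the no-balanced-components hypothesis, and appeals to all four structures of Lemma~\ref{LemmaLowDegreeDegreeStructures}. Two ingredients you will need but do not mention: in the odd case with $v_b\in X$ the paper splits on whether $N(u)\cap Y$ is empty (building paths such as $a_y\,y\,u_y\,u\,v\,b_x$ using an edge $uu_y$ into $Y$, since there is no edge available to splice a vertex of $N(v)\cap Y$ directly into $u\,v\,b_x$), and several subcases are dispatched by swapping the roles of $A$ and $B$ once one checks that the relevant endpoint lies in $A\cap B$. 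Also note that $N(v)\cap Y\neq\emptyset$ does not follow from the degree bound alone---$N(v)=X\cup\{u\}$ is numerically possible---but from $G$ lacking balanced components (else $X+u+v$ and $Y$ would give one).
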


\begin{proof}
See Figure~\ref{FigureLemma2Vertex} for a diagram of the graph $G$ and the sets $A$ and $B$ for this lemma.
Let $X$ and $Y$ be two sets partitioning $G\setminus\{u, v\}$ as in the definition of having balanced components.  

Suppose that $|G|$ is even.  In this case we have $|X|=|Y|=\frac{1}{2}|G|-1$.  We consider two cases depending on whether $B\cap (N(v)-u)$ is empty or not.

{\bf Case 1:} Suppose that we have $B\cap (N(v)-u) \neq \emptyset$.  Let $v_b$ be a vertex in $B\cap (N(v)-u)$.  Let $P = u v v_b$ to obtain a path such that $G\setminus P$ has balanced components and so  satisfies (\ref{pathdegree2vertices}). 

{\bf Case 2:}  Suppose that we have $B\cap (N(v)-u) = \emptyset$.  Using the integrality of~$|N(v)|$ and~$|B|$, as well as the fact that~$|G|$ is even we obtain that $|N(v)+v|+|B|\geq \left\lceil\frac{1}{2}(|G|+1)\right\rceil+\left\lceil\frac{1}{2}(|G|-1)\right\rceil=|G|+1$.  Since $B\cap (N(v)-u) = \emptyset$ and $v\not\in B$  we have $(N(v)+v)\cap B=\{u\}$. Combined with $|N(v)+v|+|B|\geq |G|+1$ this gives  $d(v)=|B|=\frac{1}{2}|G|$.  Now $P= u$ is a path with $G\setminus P$  having structure (iii) of Lemma~\ref{LemmaLowDegreeDegreeStructures} and so satisfies (\ref{pathdegree2vertices}).
%as a consequence of $$\Delta(G\setminus P)\leq \max(|N(v)-u|, |Y|, |X|)=\frac{1}{2}(|G|-2)=\frac{1}{2}(|G\setminus P|-1).$$

\

Suppose that $|G|$ is odd.  In this case we have $|X|=\frac{1}{2}(|G|-3)$ and $|Y|=\frac{1}{2}(|G|-1)$.  We consider three cases depending on which of $N(v)\cap B\cap X$ and $N(v)\cap B \cap Y$ are empty.

{\bf Case 1:}  Suppose that we have $N(v)\cap B \cap Y \neq \emptyset$.  Let~$b_y$ be a vertex in $N(v)\cap B \cap Y$.  
Let $P = u v b_y$ to obtain a path such that $G\setminus P$ has balanced components and so satisfies (\ref{pathdegree2vertices}). 

{\bf Case 2:}  Suppose that we have $N(v)\cap B \cap X \neq \emptyset$.
Let $b_x$ be a vertex in $N(v)\cap B \cap X$

If $\Delta(Y)<|Y|-1$ holds, then $P = u v b_x$ is a path with $G\setminus P$  having structure (ii) of Lemma~\ref{LemmaLowDegreeDegreeStructures} and so satisfies (\ref{pathdegree2vertices}).
%as a consequence of $$\Delta(G\setminus P)\leq \max(\Delta(X),\Delta(Y))<\frac{1}{2}(|G|-3)=\frac{1}{2}(|G\setminus P|-1).$$

Suppose that $\Delta(Y)=|Y|-1$.  Let $y$ be a vertex in $Y$ with $|Y|-1$ neighbours in $Y$.
There are two subcases depending on whether $N(u)\cap Y$ is empty or not.

{\bf Case 2.1:}  Suppose that $N(u)\cap Y\neq \emptyset$. 
Let $u_y$ be a vertex in $N(u)\cap Y$.   There are two subcases depending on whether $A\cap Y$ is empty or not.

{\bf Case 2.1.1:}
 If $A\cap Y$ contains a vertex $a_y$, then depending on which of $a_y$, $u_y$ and $y$ are equal one of the sequences $a_y y u_y u v b_x$, $a_y u_y u v b_x$ or $a_y u v b_x$ forms a path from $A$ to $B$.  Choose $P$ to be this path in order to obtain a path such that  $G\setminus P$ has balanced components and so satisfies (\ref{pathdegree2vertices}).

{\bf Case 2.1.2:}
If  $A\cap Y= \emptyset$ then we must have $A=X+u$, and in particular $b_x\in A$. Also, $\max(|A|, |B|)\geq \frac{1}{2}|G|$ and $v\not\in A\cup B$ imply that we have $B\cap Y\neq \emptyset$.  Therefore we can repeat the argument for Case 2.1.1, exchanging the roles of $A$ and $B$ to obtain a path satisfying~(\ref{pathdegree2vertices}). 

{\bf Case 2.2:} $N(u)\cap Y = \emptyset$.
Since $G$ does not have a partition into balanced components $X+u+v$ and $Y$, there must be a vertex $v_y$ in $N(v)\cap Y$.

We claim that without loss of generality we can assume that $B\cap Y \neq \emptyset$. Indeed $\max(|A|, |B|)\geq \frac{1}{2}|G|$ and $v\not\in A\cup B$ implies that either $A\cap Y \neq \emptyset$ or $B\cap Y \neq \emptyset$ holds.  If $A\cap Y \neq \emptyset$ holds then we exchange the roles of $A$ and $B$ and of $u$ and $b_x$ for the rest of the proof of this case.
Therefore, we can suppose that $B\cap Y$ contains a vertex, $b_y$.

If $\Delta(X)< |X|-1$, then depending on which of $v_y$, $b_y$ and $y$ are equal, one of the sequences $u v v_y y b_y$, $u v v_y b_y$ or $u v b_y$  forms a path satisfying (\ref{pathdegree2vertices}). 

Therefore we can assume that $\Delta(X)= |X|-1$.  Notice that this implies that $X$ is connected.  Let $x\in X$ be a neighbour of $b_x$.
There are two subcases depending on whether $A\cap Y$ is empty or not.

{\bf Case 2.2.1:}
Suppose that  $A\cap Y=\emptyset$. In this case, we must have $A = X+ u$.  Depending on which of $v_y$, $b_y$ and $y$ are equal, one of the sequences $x b_x v v_y y b_y$, $x b_x v v_y b_y$ or $x b_x v b_y$  forms a path  from $A$ to $B$.  Choose $P$ to be this path in order to obtain a path such that  $G\setminus P$ has balanced components and so satisfies (\ref{pathdegree2vertices}).

{\bf Case 2.2.2:}
Suppose that $A\cap Y$ contains a vertex $a_y$.  

If $a_y$, $v_y$ and $y$ are not all distinct then one of the sequences $P=a_y v_y v b_x$ or $P=a_y v b_x$ forms a path from $A$ to $B$ such that  $G\setminus P$ has balanced components and so satisfies (\ref{pathdegree2vertices}).
  
If $a_y$, $v_y$ and $y$ are all distinct  $\Delta(X-b_x + u)< |X-b_x + u|-1$ then  $P=a_y y v_y v b_x$ is a path with $G\setminus P$  having structure (ii) of Lemma~\ref{LemmaLowDegreeDegreeStructures} and so satisfies (\ref{pathdegree2vertices}).
%as a consequence of $$\Delta(G\setminus P)\leq \Delta(X-b_x + u)\leq \frac{1}{2}(|G|-7) < \frac{1}{2}(|G\setminus P|-1).$$

If $a_y$, $v_y$ and $y$ are all distinct and $\Delta(X-b_x + u)=|X-b_x + u|-1$ then $u$ has a neighbour $u_x\in X\cap(N(x)- b_x +x)$. Depending on whether $u_x=x$ or not, one of the sequences $a_y y v_y v u u_x x b_x$ or $a_y y v_y v u u_x b_x$ forms a path  from $A$ to $B$.  Choose $P$ to be this path to obtain a path such that  $G\setminus P$ has balanced components and so satisfies (\ref{pathdegree2vertices}).

{\bf Case 3:}  Suppose that we have $N(v)\cap B \subseteq \{u\}$.

First we note that, without loss of generality, we can assume that we have either $|B|\geq \frac{1}{2}|G|$ or $u\not \in B$.  Indeed if $u \in B$ and $|B| = \frac{1}{2}(|G|-1)$ both hold, then we must have $|A|\geq \frac{1}{2}|G|$ and so we can exchange the roles of $A$ and $B$ (returning to one of the previous cases of this proof if needed).

Therefore, we can assume that either $|B|\geq \frac{1}{2}|G|$ or $u\not \in B$ holds.  
In either case, we have $|B-u|\geq \frac12(|G|-1)$.
Since $v\not \in B$, and $N(v)\cap B \subseteq \{u\}$ we have that $B-u$ and $N(v)+v$ are disjoint. 
Since $|B-u|\geq \frac12(|G|-1)$ and $|N(v)+v|\geq \frac12(|G|+1)$ we have that  $B-u$ and $N(v)+v$ partition $G$ and satisfy $|B-u|= \frac12(|G|-1)$ and $|N(v)+v|= \frac12(|G|+1)$.
 Since, $G$ does not have a partition into balanced components $B-u$ and $N(v)+v$, there must be an edge between the sets $B-u$ and $N(v)$.
There are two subcases depending on whether this edge intersects $Y$ or not.

{\bf Case 3.1:}
Suppose that there is an edge $v' b_y$ between $v'\in N(v)$ and $b_y\in B\cap Y$.  
If $v'\neq u$, then $P= u v v' b_y$  is a path such that  $G\setminus P$ has balanced components and so satisfies (\ref{pathdegree2vertices}).
If $v'=u$, then $P=u b_y$ is a path such that $G\setminus P$  has structure (iii) of Lemma~\ref{LemmaLowDegreeDegreeStructures} and so satisfies (\ref{pathdegree2vertices}).
%Indeed, to see this for $P=u b_y$, note that we have $$|G\setminus \{v' b_y\}|\leq \max(N(v)-u, |X|, |Y-b_y|)\leq \frac{1}{2}(|G|-3)=\frac{1}{2}(|G\setminus \{v' b_y\}|-1).$$  The graph $G\setminus \{u v v' b_y\} has balanced components and so satisfies (\ref{pathdegree2vertices}).

{\bf Case 3.2:}
If there are no edges between $N(v)$ and $B\cap Y$ then there must be an edge $v' b_x$ between $v'\in N(v)$ and $b_x\in B\cap X$. 

In particular this means that $B\cap X$ is not empty, which together with $|B-u|=\frac 12(|G|-1)$ and $|Y|=\frac 12(|G|-1)$ gives $Y\setminus (B-u)\neq \emptyset$. Since $B-u$ and $N(v)+v$ partition $G$, we have $N(v)\cap Y \neq \emptyset$.  

Since $|N(v)+v|=\frac{1}{2}(|G|+1)$ and $u,v \in N(v)+v$ we $|N(v)\cap Y|\leq |(N(v)+v)\setminus \{u,v\}|=\frac{1}{2}(|G|-3)<|Y|$.
Since $B-u$ and $N(v)+v$ partition $G$, we have $B\cap Y \neq \emptyset$.  

Since $N(v)\cap Y \neq \emptyset$ and $B\cap Y \neq\emptyset$ both hold and there are no edges between $N(v)\cap Y$ and $B\cap Y$, we have that $\Delta(Y)< |Y|-1$.  
If $v'=u$, then $P=u v v' b_x$ is a path such that $G\setminus P$  has structure (ii) of Lemma~\ref{LemmaLowDegreeDegreeStructures} and so satisfies (\ref{pathdegree2vertices}).
If  $v'=u$ then $P=u b_x$ is a path such that $G\setminus P$  has structure (iv) of Lemma~\ref{LemmaLowDegreeDegreeStructures} and so satisfies (\ref{pathdegree2vertices}).
%as a consequence of $$\Delta(G\setminus \{u v v' b_x\})\leq \Delta(Y)\leq \frac{1}{2}(|G|-5) = \frac{1}{2}(|G\setminus \{u v v' b_x\}|-1).$$
\end{proof}

We are now ready to prove Lemma~\ref{mainlemma}. The proof is very similar to the proof of Lemma~\ref{PathMinDegreeSimple}.

\begin{proof}[Proof of Lemma~\ref{mainlemma}]
The proof is by induction on the number of vertices of $G$.  If $|G|=2$ or $3$ the lemma holds trivially, taking $P$ to be either an edge between $A$ and $B$ or a single vertex in $A \cap B$.  At least one of these exist since $G$ does not have balanced components.

For $n\geq 4$, suppose that the lemma holds for all graphs with at most $n-1$ vertices.  
Let $G$ be a graph with $n$ vertices. We split into two cases depending on the maximum degree of $G$.

{\bf Case 1:}
Suppose that there is a vertex $x\in G$ such that $d(x)\geq \frac{1}{2}(|G|-1)$.  We consider three subcases depending on whether $x$ is in $A$, $x$ is in $B$, or $x$ is in neither $A$ nor $B$.

{\bf Case 1.1:}
Suppose that $x\in A$ holds.  
Let $G'=G-x$, $A'=B- x$, and $B'=N(x)$.  Note that we  have $|A'|\geq |B|-1\geq \frac{1}{2}|G|-1=\frac{1}{2}(|G'|-1)$ and $|B'|=|N(x)|\geq \frac{1}{2}(|G|-1)= \frac{1}{2}|G'|$.

If $G'$ has balanced components, then the lemma follows from Lemma~\ref{1vertex} (applied with $v=x$, $A=A$, and $B=B$.)  Otherwise, if $G'$ does not have balanced components  we can apply induction in order to find a path $P'$ in $G'$ starting in $B'$ and ending in $A'$ such that $\Delta(G'\setminus P') \leq \frac{1}{2}(|G'\setminus P'|-1)$.  The path $P=x+P'$ is from $A$ to $B$ and satisfies~(\ref{pathdegree}).

{\bf Case 1.2:}
Suppose that $x\not\in A$ and $x \in B$ holds.  
Let $G'=G-x$, $A'=A$, and $B'=N(x)$.  Note that we have $|A'|\geq\frac{1}{2}(|G'|-1)$ and $|B'|=|N(x)|\geq \frac{1}{2}(|G|-1)=\frac{1}{2}|G'|$. 

If $G'$ has balanced components, then the lemma follows from Lemma~\ref{1vertex} (applied with $v=x$, $\tilde A=B$, and $\tilde B=A$.)  Otherwise, if $G'$ does not have balanced components  we can apply induction in order to find a path $P'$ in $G'$ starting in $A'$ and ending in $B'$ such that $\Delta(G'\setminus P') \leq \frac{1}{2}(|G'\setminus P'|-1)$.  The path $P=P'+x$ is from $A$ to $B$ and satisfies (\ref{pathdegree}).

{\bf Case 1.3:}
Suppose that $x\not\in A$ and $x \not\in B$ holds.  
Note that since $N(x)$ and $B$ are both contained in $G- x$ and $|N(x)|+|B|\geq |G|-\frac{1}{2}$, we have that $N(x)\cap B$ is nonempty.  

Suppose that $N(x)\cap A$ contains a vertex, $y$.
Let $G'=G- x-y$, $A'=N(x)-y$, and $B'=B- y$.  Note that we have $|A'|=|N(x)|-1\geq\frac{1}{2}(|G|-3)=\frac{1}{2}(|G'|-1)$ and $|B'|\geq |B|-1\geq \frac{1}{2}|G|-1=\frac{1}{2}|G'|$.  
If $G'$ has balanced components, then the lemma follows from Lemma~\ref{2vertices} (applied with $v=x$, $u=y$, $A=A$, and $B=B$.)  Otherwise, if $G'$ does not have balanced components  we can apply induction in order to find a path $P'$ in $G'$ starting in $A'$ and ending in $B'$ such that $\Delta(G'\setminus P') \leq \frac{1}{2}(|G'\setminus P'|-1)$.  The path $P=y+x+P'$ is from $A$ to $B$  and satisfies~(\ref{pathdegree}).

Suppose that $N(x)\cap A$ is empty. Let $z$ be a vertex in $N(x)\cap B$.   Let $G'=G-x-z$, $A'=N(x)-z$, and $B'=A$.  Note that we have  $|A'|=|N(x)|-1\geq\frac{1}{2}(|G|-3)=\frac{1}{2}(|G'|-1)$ and $|B'|=|A|\geq \frac{1}{2}(|G|-1)>\frac{1}{2}|G'|$.  
If $G'$ has balanced components, then the lemma follows from Lemma~\ref{2vertices} (applied with $v=x$, $u=z$, $\tilde A=B$, and $\tilde B=A$.)  Otherwise, if $G'$ does not have balanced components  we can apply induction in order to find a path $P'$ in $G'$ starting in $A'$ and ending in $B'$ such that $\Delta(G'\setminus P') \leq \frac{1}{2}(|G'\setminus P'|-1)$.  The path $P=z+x+P'$ is from $B$ to $A$  and satisfies~(\ref{pathdegree}).

{\bf Case 2:}
Suppose that $\Delta(G)<\frac{1}{2}(|G|-1)$.    If there is a vertex in $A\cap B$ let $P$ be this vertex. We have that $\Delta(G\setminus P) < \frac{1}{2}(|G|-1)= \frac{1}{2}|G\setminus P|$.  Using the fact that $|G\setminus P|$ and $\Delta(G\setminus P)$ are both integers, we obtain that (\ref{pathdegree}) holds.  

Therefore, we can suppose that $A$ and $B$ are disjoint.  Notice that in this case we have $|A|+|B|\geq |G|-\frac{1}{2}$ which implies that $A\cup B= V(G)$.

Suppose there is a vertex $x$ of degree $\frac{1}{2}|G|-1$ in $G$.  Let $G'=G- x$ and $A'=N(x)$.  Let $B'=A$ if $x \in B$ and $B'=B$ if $x \in A$.  In either case we have $|A'|=|N(x)|= \frac{1}{2}|G|-1=\frac{1}{2}(|G'|-1) $ and $|B'|\geq |A| \geq \frac{1}{2}(|G|-1)=\frac{1}{2}|G'|$.  
If $G'$ has balanced components, then the lemma follows from Lemma~\ref{1vertex}.  Otherwise, if $G'$ does not have balanced components  we can apply induction in order to find a path $P'$ in $G'$ starting in $A'$ and ending in $B'$ such that $\Delta(G'\setminus P') \leq \frac{1}{2}(|G'\setminus P'|-1)$.  The path $P=P'+x$ is between $A$ and $B$ and satisfies~(\ref{pathdegree}).  

Suppose that we have $\Delta(G)< \frac{1}{2}|G|-1$.  Since $G$ does not have balanced components, there must be an edge between $A$ and $B$. Let $P$ be this edge.  We have that $\Delta(G\setminus P) \leq \Delta(G)< \frac{1}{2}|G|-1= \frac{1}{2}|G\setminus P|$.  Using the fact that $|G\setminus P|$ and $\Delta(G\setminus P)$ are both integers, we obtain that (\ref{pathdegree}) holds. 
 \end{proof}

We are now ready to deduce Theorem~\ref{CycleMinDegree}.  

\begin{proof}[Proof of Theorem~\ref{CycleMinDegree}]
If $\Delta(G)\leq \frac{1}{2}(|G|-1)$ then letting $C=\emptyset$ gives a cycle satisfying (\ref{EqCycleMinDegree}).

Otherwise, there must be a vertex $v \in G$ of degree at least $\frac{1}{2}|G|$.  If $G-v$ has balanced components then we can let $C=v$ to obtain a cycle  satisfying (\ref{EqCycleMinDegree}).  If $G-v$ does not have balanced components then we can apply Lemma~\ref{mainlemma} to the graph $G-v$ with $A=B=N(v)$ to obtain a path $P$ from $N(v)$ to $N(v)$ satisfying $\Delta(G\setminus P-v)\leq \frac{1}{2}(|G\setminus P-v|-1)$.  We can close the path with the vertex $v$ to obtain a cycle satisfying (\ref{EqCycleMinDegree}).
\end{proof}

\section{Partitioning highly connected graphs}\label{SectionConnected}
\newcommand{\CNhood}{\overline{N}}
In this section we prove Theorem~\ref{CycleMinDegreeConn}.

Recall that the degree of a vertex $v\in G$ is the number of edges containing it, denoted $d(v)$. If we have a set $S\subseteq V(G)$, then $d_S(v)$ denotes the degree of $v$ in the induced subgraph $G[S+v]$. For a subgraph $H$ of $G$, it will be convenient to use $d_H(v)$ to mean $d_{V(H)}(v)$. 
%Recall that for a graph $G$, $\delta(G)$ and $\Delta(G)$ denote the minimum and maximum degree respectively of vertices in $G$. If $S$ is a set of vertices in $G$ we will use $\delta(S)$ and $\Delta(S)$ to mean $\delta(G[S])$ and $\Delta(G[S])$.

%Throughout this section we will always have a cycle $C$ in a graph $G$. For most of the proof we will be looking at the set of vertices $V(G)\setminus V(C)$ and interested in the quantity $\Delta(V(G)\setminus V(C))$. 
%For simplicity of presentation, we  use $G\setminus C$ to mean the graph formed from $G$ by deleting the \emph{vertices} of $C$, and so $\Delta(G\setminus C)$ is exactly $\Delta(V(G)\setminus V(C))$.

\begin{proof}[Proof of Theorem~\ref{CycleMinDegreeConn}]
%For a set $S\subseteq V(K_n)$ let $\Delta(S)$ be the maximum degree of the red subgraph $K_n$ with vertex set $S$.
For a set $S\subseteq V(G)$ and a vertex $v\in S$, let $C_S(v)$ be the connected component of $G[S]$ containing~$v$.  Let $f(S)=|\{v\in S: \Delta(C_S(v))=\Delta(S)\}|$ i.e. $f(S)$ is the total number of vertices contained in components of $S$ with a vertex of maximal degree.

Let $C$ be a cycle in $G$ satisfying the following.
\begin{enumerate} [\normalfont(i)]
 \item $\Delta(G\setminus C)$ is as small as possible.
\item $f(G\setminus C)$ is as small as possible, whilst keeping (i) true.
\item $|C|$ is as small as possible, whilst keeping (i) and (ii) true.
\end{enumerate}
We will show that the  cycle $C$  satisfies the theorem.
Let the sequence of vertices of $C$ around the cycle be $c_1,c_2, \dots, c_{|C|}$. A \emph{clockwise} sequence of vertices in $C$ is one of the form $c_{i \pmod{|C|}},$ $c_{i+1 \pmod{|C|}},$ $c_{i+2 \pmod{|C|}}, \dots, c_{i+t \pmod{|C|}}$.

For $v\in C$, let  
\begin{align*}
\CNhood(v)&=C_{G\setminus C+v}(v)-v\\
&=\{u\in G\setminus C:  \text{There is a path between } u \text{ and } v \text{ contained in } G\setminus C+v\}.
\end{align*}
We say that $v\in C$ \emph{has large neighbourhood} if $|\CNhood(v)|\geq \Delta(G\setminus C)-3$.  Otherwise we say that $v$ has \emph{small neighbourhood}.
We'll need the following technical claim.
%\textbf{In the introduction define $d_{G}(v)$.}

\begin{claim}\label{ClaimDegreeInvariance}
 Let $C'$ be a cycle such that every $v \in C\setminus C'$ has small neighbourhood and every $v\not\in C'$ satisfies $|N(v)\cap (C\setminus C')|\leq 3$.  The following hold
\begin{enumerate}[(1)]
\item Let $u$ be a vertex outside $C'$.
 Then  either we have $d_{G\setminus C'}(u)<\Delta(G\setminus C)$, or we have $u\not\in C$ and
 \begin{equation}\label{DegreeInvariance}
  d_{G\setminus C'}(u)= d_{G\setminus C}(u)= d_{G\setminus (C\cup C')}(u).
 \end{equation}
\item $\Delta(G\setminus C')= \Delta(G\setminus C)=\Delta(G\setminus (C\cup C'))$.
\item $f(G\setminus C')= f(G\setminus C)=f(G\setminus (C\cup C'))$.
\end{enumerate} 
 \end{claim}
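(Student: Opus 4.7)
I will prove (1) first by a contrapositive case analysis, then deduce (2) immediately from (1) combined with minimality condition (i), and finally obtain (3) by combining (1) with a short structural observation about max-degree components and minimality condition (ii).

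For (1), suppose $u\notin C'$ with $d_{G\setminus C'}(u)\geq\Delta(G\setminus C)$; I aim to show $u\notin C$ and that the three degrees all equal $\Delta(G\setminus C)$. If $u\in C\setminus C'$, then $u$ has small neighbourhood, so $|\CNhood(u)|\leq\Delta(G\setminus C)-4$; any neighbour of $u$ in $V(G)\setminus V(C\cup C')$ lies in $\CNhood(u)$, and there are at most $3$ neighbours in $C\setminus C'$, forcing $d_{G\setminus C'}(u)\leq\Delta(G\setminus C)-1$, a contradiction. If $u\notin C$ but has a neighbour $v\in C\setminus C'$, then $u$ and its entire $G\setminus C$-component $K$ lie in $\CNhood(v)$, so $d_{G\setminus C}(u)\leq\Delta(G\setminus C)-5$ and hence $d_{G\setminus C'}(u)\leq\Delta(G\setminus C)-2$, again contradicting. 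So $u\notin C$ and $u$ has no neighbour in $C\setminus C'$, giving $d_{G\setminus C'}(u)=d_{G\setminus(C\cup C')}(u)$; the sandwich $\Delta(G\setminus C)\leq d_{G\setminus C'}(u)=d_{G\setminus(C\cup C')}(u)\leq d_{G\setminus C}(u)\leq\Delta(G\setminus C)$ then collapses everything to $\Delta(G\setminus C)$. Part (2) follows immediately: condition (i) gives $\Delta(G\setminus C')\geq\Delta(G\setminus C)$, and applying (1) to a max-degree vertex of $G\setminus C'$ yields both the reverse inequality and $\Delta(G\setminus(C\cup C'))=\Delta(G\setminus C)$.

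For (3), I would first establish the structural observation that every vertex in a max-degree component $K$ of $G\setminus C$ has no neighbour in $C\setminus C'$: otherwise such a neighbour $x\in C\setminus C'$ would give $\CNhood(x)\supseteq K$, but $|K|\geq\Delta(G\setminus C)+1$ (as $K$ contains a max-degree vertex together with all its $\Delta(G\setminus C)$ neighbours in $K$) whereas $|\CNhood(x)|\leq\Delta(G\setminus C)-4$. Then for any max-degree vertex $w$ of $G\setminus C'$, which by (1) is max-degree in $G\setminus C$, lies in some such $K$, and has no neighbour in $C\cup C'$, a BFS from $w$ in $G\setminus C'$ is confined to $K\setminus C'$ at every stage (using the structural observation at each subsequent step), so it agrees with the BFS in $G\setminus(C\cup C')$. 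This gives $C_{G\setminus C'}(w)=C_{G\setminus(C\cup C')}(w)$ and hence $f(G\setminus C')=f(G\setminus(C\cup C'))$. Finally, the max-degree components of $G\setminus C'$ are pairwise disjoint subsets of the $K$'s, so $f(G\setminus C')\leq\sum_K|K|=f(G\setminus C)$; combined with minimality (ii), which gives $f(G\setminus C')\geq f(G\setminus C)$, all three quantities in (3) coincide.

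The most delicate step is the structural observation in (3): the two hypotheses on $C'$ --- small $\CNhood$ for $v\in C\setminus C'$ and the three-neighbour cap for vertices outside $C'$ --- must together insulate max-degree components of $G\setminus C$ from $C\setminus C'$, and this insulation is what ultimately makes the BFS comparison valid and the counting identity in (3) go through.
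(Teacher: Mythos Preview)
Your proof is correct and follows essentially the same strategy as the paper: part (1) via the same contrapositive case split on whether $u\in C$ and whether $u$ has a neighbour in $C\setminus C'$, part (2) by combining (1) with minimality (i), and part (3) by combining (1) with minimality (ii). Your treatment of (3) is in fact more careful than the paper's: the paper asserts directly that ``any vertex of degree $\Delta(G\setminus C)$ in $G\setminus C'$ also has degree $\Delta(G\setminus C)$ in both $G\setminus(C\cup C')$ and $G\setminus C$'' implies $f(G\setminus C')\leq f(G\setminus(C\cup C'))$ and $f(G\setminus C')\leq f(G\setminus C)$, whereas you make explicit the structural observation (no vertex of a max-degree component of $G\setminus C$ has a neighbour in $C\setminus C'$) that justifies why the \emph{entire component}, not just the max-degree vertex, transfers between the three graphs. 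That observation is exactly what is needed to pass from a statement about degrees of individual vertices to a statement about $f$, which counts all vertices in max-degree components.
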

\begin{proof}
First we prove (1).
Suppose for the sake of contraadiction that we have a vertex $u$ outside of $C'$ with $d_{G\setminus C'}(u)\geq\Delta(G\setminus C)$. 

Suppose that $u\in C$.  Then the following holds.
$$|\CNhood(u)|\geq |N(u)\setminus C|\geq |N(u)\setminus (C\cup C')|=d_{G\setminus C'}(u)- |N(u)\cap (C\setminus C')|\geq \Delta(G\setminus C)-3. $$
This contradicts $u\in C\setminus C'$ having small neighbourhood.

Therefore, we can suppose  that $u\not \in C$.  
If there is some $x \in N(u)\cap (C\setminus C')$, then note that we have $N(u)\setminus C \subseteq  \CNhood(x)$, which implies that
$$|\CNhood(x)|\geq |N(u)\setminus C|\geq |N(u)\setminus (C\cup C')|=d_{G\setminus C'}(u)- |N(u)\cap (C\setminus C')|\geq \Delta(G\setminus C)-3.$$
This contradicts $x\in C\setminus C'$ having small neighbourhood.

If $N(u)\cap (C\setminus C')=\emptyset$ then  we have $N_{G\setminus C'}(u)\subseteq (N(u)\setminus C)\cup (N(u)\cap (C\setminus C'))= N_{G\setminus C}(u)$. This implies $N_{G\setminus (C'\cup C')}(u)=N_{G\setminus C}(u)\cap N_{G\setminus C'}(u)=N_{G\setminus C}(u)$ which gives $d_{G\setminus C}(u)= d_{G\setminus (C\cup C')}(u)$. 
Combining $N_{G\setminus C'}(u)\subseteq N_{G\setminus C}$ with $d_{G\setminus C'}(u)\geq \Delta(G\setminus C)\geq d_{G\setminus C}(u)$ gives $d_{G\setminus C'}(u)=d_{G\setminus C}(u)$ which implies (\ref{DegreeInvariance}).

For (2), let $u$ be any vertex in $G\setminus C'$ with $d_{G\setminus C'}(u)=\Delta(G\setminus C')$. 
By the minimality of $C$ in (i) we have $\Delta(G\setminus C')\geq \Delta(G\setminus C)$ which implies $d_{G\setminus C'}(u)\geq \Delta(G\setminus C)$. Using part (1), we have $\Delta(G\setminus C) \leq d_{G\setminus C'}(u)= d_{G\setminus C}(u)= d_{G\setminus (C\cup C')}(u)$ and also $u\not\in C$. In particular we have $\Delta({G\setminus (C\cup C')})\geq d_{G\setminus (C\cup C')}(u) \geq \Delta({G\setminus C})$ which combined with $\Delta({G\setminus (C\cup C')})\leq \Delta({G\setminus C})$ gives the equality $\Delta({G\setminus (C\cup C')})= \Delta({G\setminus C})$.
Since $u\not\in C$ we get $d_{G\setminus C}(u)\leq \Delta(G\setminus C)$ which implies $d_{G\setminus C'}(u)= d_{G\setminus C}(u)=\Delta(G\setminus C)$. Since $u$ was chosen to have $d_{G\setminus C'}(u)=\Delta(G\setminus C')$ we obtain the second equality $\Delta({G\setminus  C'})= \Delta({G\setminus C})$.

For (3), we use the fact that  (2) tells us that  $G\setminus (C\cup C')$  and $G\setminus C'$ have maximum degree $\Delta(G\setminus C)$.  By part (1), any vertex of degree $\Delta(G\setminus C)$ in $G\setminus C'$ also has degree $\Delta(G\setminus C)$ in both $G\setminus (C\cup C')$  and $G\setminus C$. This implies that $f(G\setminus C')\leq f(G\setminus (C\cup C'))$ and $f(G\setminus C')\leq f(G\setminus C)$. Now part (3) follows from the fact that we have have $f(G\setminus C')\geq f(G\setminus C)$ (from the minimality of $C$ in (ii)) and $f(G\setminus C')\geq f(G\setminus (C\cup C'))$.
\end{proof}

Let $A$ be an arbitrary component of $G\setminus C$ satisfying $\Delta(A)=\Delta(G\setminus C)$.
We say that $v \in C$ is \emph{connected to $A$} if there is an edge between $v$ and $A$. 
Notice that if $v$ is connected to $A$, then it has large neighbourhood. 
The next two claims prevent certain sequences of vertices from existing on the cycle $C$.
\begin{claim}\label{1sequence}
 There does not exists a sequence of vertices $x_1, \dots, x_m$ along $C$ such that all the vertices $x_2, \dots, x_{m-1}$ have small neighbourhood and at least one of the following holds.
\begin{enumerate}[(I)]
 \item $m\geq 3$ and $x_1 x_m$ is an edge.
 \item $m\geq 4$ and  $N(x_1)\cap N(x_m)\setminus C\neq \emptyset$.
 \item $x_1$ and $x_m$ are both connected to $A$
\end{enumerate}
\end{claim}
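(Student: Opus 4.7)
The plan is to argue by contradiction. Suppose such a sequence exists and let $x_1,\ldots,x_m$ be one of minimum length $m$. In each of the three cases I construct a competing cycle $C'$ by short-cutting the clockwise arc of $C$ from $x_1$ to $x_m$ through $x_2,\ldots,x_{m-1}$: in case (I) it is replaced by the edge $x_1x_m$; in case (II), after choosing $u\in N(x_1)\cap N(x_m)\setminus C$, it is replaced by $x_1 u x_m$; in case (III), after choosing neighbours $a_1,a_m\in A$ of $x_1,x_m$ together with a path $P\subseteq A$ from $a_1$ to $a_m$ (a single vertex if $a_1=a_m$), it is replaced by $x_1+P+x_m$. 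In all three constructions $C\setminus C'=\{x_2,\ldots,x_{m-1}\}$, and by the hypothesis of the claim every such vertex has small neighbourhood, so the first hypothesis of Claim~\ref{ClaimDegreeInvariance} is met.

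Next I verify the second hypothesis: every $v\notin C'$ satisfies $|N(v)\cap\{x_2,\ldots,x_{m-1}\}|\leq 3$. Assume for contradiction that some $v\notin C'$ has four such neighbours $x_{k_1},\ldots,x_{k_4}$ in cyclic order, so $2\leq k_1<\cdots<k_4\leq m-1$. If $v\notin C$ then $v\in N(x_{k_1})\cap N(x_{k_4})\setminus C$; the subsequence $x_{k_1},x_{k_1+1},\ldots,x_{k_4}$ has length at least $4$, its interior lies in $\{x_2,\ldots,x_{m-1}\}$ and hence consists of vertices of small neighbourhood, so it witnesses case (II) with strictly smaller length, contradicting the minimality of $m$. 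If $v\in C$ then $v=x_k\in\{x_2,\ldots,x_{m-1}\}$; since $x_k$ has only two cyclic neighbours on $C$, at least one $x_{k'}$ among the four satisfies $|k'-k|\geq 2$, and then the sub-arc of $C$ from $x_k$ to $x_{k'}$ witnesses case (I) with strictly smaller length, again contradicting minimality. Hence Claim~\ref{ClaimDegreeInvariance} applies, yielding $\Delta(G\setminus C')=\Delta(G\setminus C)$ and $f(G\setminus C')=f(G\setminus(C\cup C'))=f(G\setminus C)$.

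In case (I), $|C'|=|C|-(m-2)<|C|$ since $m\geq 3$; in case (II), $|C'|=|C|-m+3<|C|$ since $m\geq 4$. Either way, $C'$ beats $C$ under criterion (iii) while matching it on (i) and (ii), contradicting the choice of $C$. The main obstacle is case (III), where $|C'|$ can exceed $|C|$ and so a length argument fails. Here I close by computing $f(G\setminus(C\cup C'))$ directly: since $V(P)\subseteq A$ and $V(C\cup C')=V(C)\cup V(P)$, we have $G\setminus(C\cup C')=(G\setminus C)\setminus V(P)$, in which every component of $G\setminus C$ other than $A$ survives unchanged while $A$ itself is replaced by $A\setminus V(P)$. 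The pieces of $A\setminus V(P)$ have maximum degree at most $\Delta(A)=\Delta(G\setminus C)$ and altogether at most $|A|-|V(P)|$ vertices, and therefore contribute at most $|A|-|V(P)|$ to $f(G\setminus(C\cup C'))$. Combined with the unchanged contributions of the other maximum-degree components this gives $f(G\setminus(C\cup C'))\leq f(G\setminus C)-|V(P)|<f(G\setminus C)$, since $|V(P)|\geq 1$, which contradicts the equality $f(G\setminus(C\cup C'))=f(G\setminus C)$ from Claim~\ref{ClaimDegreeInvariance}(3). This completes the proof.
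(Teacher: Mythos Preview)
Your proof is correct and follows essentially the same route as the paper: take a minimal offending sequence, build the short-cut cycle $C'$ in each of the three cases, use minimality to verify the two hypotheses of Claim~\ref{ClaimDegreeInvariance}, and derive a contradiction to (iii) in cases (I)--(II) and to the $f$-equality in case (III). Your treatment of case (III) is in fact slightly more careful than the paper's, since you state the inequality $f(G\setminus(C\cup C'))\le f(G\setminus C)-|V(P)|$ rather than an equality.
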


\begin{proof}
Suppose, for the sake of contradiction, that $x_1,\dots ,x_m$ is such a sequence.
 We can assume that $x_1,\dots, x_m$ is a minimal such sequence i.e. no   sequence $x_i, x_{i+1}\dots, x_j$ satisfies any of (I) -- (III) for $i<j<m$ or $1<i<j$.

In each of the three cases we define a cycle $C'$ to which we apply Claim \ref{ClaimDegreeInvariance}.

If (I) holds, let $C' = C\setminus \{x_2, \dots, x_{m-1}\}$ with the edge $x_1x_m$ added.

If (II) holds, let $u$ be a vertex in $N(x_1)\cap N(x_m)\setminus C$ and $C'=C\setminus \{x_2, \dots, x_{m-1}\} + u$ with the edges $x_1u$ and $u x_m$ added.

If (III) holds, then note that there must be a path $P$ contained in $A$ such that the start of $P$ is joined to $x_1$ and the end of $P$ is joined to $x_m$.  Let $C'=(C\cup P)\setminus \{x_2, \dots, x_{m-1}\}$.

We claim that in either case, the conditions of Claim~\ref{ClaimDegreeInvariance} are satisfied.  In each of (I) -- (III),  we have $C\setminus C'=\{x_2, \dots, x_{m-1}\}$, and so all the vertices in $C\setminus C'$ have small neighbourhood by the assumption of the lemma.
Suppose that a vertex $v\not\in C'$ has $4$ or more neighbours in $\{x_2, \dots, x_{m-1}\}$. 
If $v\in C$, then we would have a shorter sequence satisfying~(I).
If $v\not \in C$, then we would have a shorter sequence satisfying (II).

Therefore the conclusion of Claim~\ref{ClaimDegreeInvariance} holds for $C'$, and in particular $\Delta(G\setminus C')= \Delta(G\setminus C)=\Delta(G\setminus(C\cup C'))$ and  $f(G\setminus C')= f(G\setminus C)=f(G\setminus (C\cup C'))$.   If (I) or (II) holds then $|C'|<|C|$ contradicting minimality of $C$ in (iii).
If (III) holds, then, since $\Delta(A)=\Delta(G\setminus C)$, we have $f(G\setminus (C\cup C'))= f(G\setminus C)-|P|$ which contradicts part (3) of Claim~\ref{ClaimDegreeInvariance}.
\end{proof}

The following claim is similar to Claim~\ref{1sequence}, except there are now two sequences $x_1,  \dots, x_i$ and $y_1, \dots, y_j$ playing the role that  $x_1,  \dots, x_m$ played in Claim~\ref{1sequence}.
\begin{claim}\label{2sequences}
There do not exist two disjoint clockwise sequences $x_1, x_2, \dots, x_i$ and $y_1, y_2, \dots, y_j$ along $C$ such that $x_1$ and $y_1$ are both connected to $A$, the vertices $x_2, \dots, x_{i-1}$, $y_2\dots, y_{j-1}$ all have small neighbourhood, and one of the following holds.
\begin{enumerate}[(a)]
 \item $x_iy_j$ is a red edge.
 \item $\CNhood(x_i)\cap\CNhood(y_j) \neq \emptyset$.
\end{enumerate}
\end{claim}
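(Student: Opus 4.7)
The plan is to follow the strategy of the proof of Claim~\ref{1sequence}: assume for contradiction that such a pair of sequences exists, select a counterexample minimising $i+j$ over both cases~(a) and~(b), and build a cycle $C'$ which short-circuits the interior vertices, then derive a contradiction from Claim~\ref{ClaimDegreeInvariance}. The key extra feature compared with Claim~\ref{1sequence} is that $C'$ will absorb a nontrivial piece of the component $A$, so the contradiction will ultimately come from a strict decrease of $f$.

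To construct $C'$, let $a_1\in N(y_1)\cap A$ and $a_2\in N(x_1)\cap A$ and choose a simple $a_1$--$a_2$ path $P$ inside $A$ (possibly a single vertex when $a_1=a_2$). The four vertices $x_1,x_i,y_1,y_j$ partition $C$ into four arcs; write $[x_i,y_1]$ and $[y_j,x_1]$ for the two arcs disjoint from the interiors of our sequences. In case~(a), let $C'$ be the concatenation of the segment $x_1\,a_2\,P\,a_1\,y_1$, the reversed arc $[x_i,y_1]$, the edge $x_iy_j$, and the arc $[y_j,x_1]$. In case~(b), replace the edge $x_iy_j$ in this construction by a simple $x_iy_j$-path $Q$ whose interior lies in $G\setminus C$, obtained from any $u\in\CNhood(x_i)\cap\CNhood(y_j)$ (passing to an appropriate simple sub-cycle if $Q$ meets $P$). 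In either case $C\setminus C' = \{x_2,\ldots,x_{i-1},y_2,\ldots,y_{j-1}\}$ and the vertices of $P$ lie in $C'\setminus C$.

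The main obstacle will be verifying the two hypotheses of Claim~\ref{ClaimDegreeInvariance} for $C'$. The first, that every $v\in C\setminus C'$ has small neighbourhood, is immediate from the assumption. For the second, suppose some $v\not\in C'$ has at least four neighbours in $C\setminus C'$. If $v\in C$ then $v$ is itself an interior vertex, say $v=x_r$ with $2\le r\le i-1$; at most two of its $C\setminus C'$-neighbours are the cyclic neighbours $x_{r\pm1}$, so either it has an $x_s$-neighbour with $|s-r|\ge 2$, and then the $x_s$-to-$x_r$ sub-arc contradicts Claim~\ref{1sequence}(I), or it has a neighbour $y_s$ in the $y$-interior, in which case the pair $(x_1,\ldots,x_r),(y_1,\ldots,y_s)$ satisfies~(a) but has $r+s<i+j$, contradicting minimality. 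If $v\not\in C$, partition the four neighbours by whether they lie in the $x$- or $y$-interior: if all four sit in the same interior, some pair has cyclic distance at least $3$, producing a sub-arc of length $\ge 4$ with $v$ as a common external neighbour and contradicting Claim~\ref{1sequence}(II); if some lie in each interior, choose $x_s$ and $y_t$, and observe that $v\in N(x_s)\cap N(y_t)\setminus C\subseteq\CNhood(x_s)\cap\CNhood(y_t)$, so the pair $(x_1,\ldots,x_s),(y_1,\ldots,y_t)$ is a strictly smaller counterexample of type~(b). The case $v=y_r$ is symmetric.

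With both hypotheses checked, Claim~\ref{ClaimDegreeInvariance}(3) gives $f(G\setminus(C\cup C'))=f(G\setminus C)$. But every vertex of $P$ lies in $A$, a component of $G\setminus C$ with $\Delta(A)=\Delta(G\setminus C)$, so each such vertex contributes to $f(G\setminus C)$; since these vertices now belong to $C'$, they are removed when passing to $G\setminus(C\cup C')$, giving $f(G\setminus(C\cup C'))\le f(G\setminus C)-|P|<f(G\setminus C)$. This contradiction completes the proof.
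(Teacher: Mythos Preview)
Your overall strategy---minimise $i+j$, rebuild a cycle $C'$ through a path $P\subseteq A$, verify the hypotheses of Claim~\ref{ClaimDegreeInvariance}, and derive a strict drop in $f$---is exactly the paper's approach, and your verification of the two hypotheses of Claim~\ref{ClaimDegreeInvariance} is correct and matches the paper's.

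There is, however, one genuine gap in case~(b). You write ``passing to an appropriate simple sub-cycle if $Q$ meets $P$'' and then assert that $C\setminus C'=\{x_2,\dots,x_{i-1},y_2,\dots,y_{j-1}\}$ and that the vertices of $P$ lie in $C'\setminus C$. But if $Q$ and $P$ overlap, extracting a simple cycle from the closed walk can easily drop one of the arcs $[x_i,y_1]$ or $[y_j,x_1]$ (so $C\setminus C'$ would acquire extra vertices which need not have small neighbourhood), or drop all vertices of $P$ (so the $f$-argument collapses). Neither of your two subsequent assertions is justified once you pass to an arbitrary simple sub-cycle.

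The paper avoids this entirely by observing that $Q$ and $P$ are in fact \emph{disjoint}. The interior of $Q$ is a connected subset of $G\setminus C$, so it lies in a single component of $G\setminus C$; if that component were $A$, then the first interior vertex of $Q$ would be a neighbour of $x_i$ in $A$, i.e.\ $x_i$ would be connected to $A$. But then the sequence $x_1,\dots,x_i$ would satisfy condition~(III) of Claim~\ref{1sequence}, a contradiction. Hence $Q\cap A=\emptyset$, so $Q\cap P=\emptyset$, and your $C'$ is already a simple cycle with the stated properties. Replace your parenthetical hand-wave with this one-line argument and the proof is complete.
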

\begin{proof}
Suppose for the sake of contradiction that $x_1, x_2, \dots, x_i$ and $y_1, y_2, \dots, y_j$  are such sequences. 
 We can  assume that  $x_1, x_2, \dots, x_i$ and $y_1, y_2, \dots, y_j$  are minimal such sequences sequences,  i.e. any pair of  clockwise subsequences $x_a, x_{a+1}, \dots, x_b$ and $y_c, y_{c+1}, \dots, y_d$ satisfying~(a) or~(b) have $(x_a,x_b,y_c,y_d)=(x_1,x_i,y_1,y_j)$.
Using Claim~\ref{1sequence} we can also assume that neither of these sequences satisfy (I), (II), or (III) of Claim~\ref{1sequence}.

Note that there is a path $P\subseteq A$ such that the start of $P$ is connected to $x_1$ and the end of $P$ is connected to $y_1$.  Let $p_x$ be the start of $P$ and $p_y$ the end of $P$.
We construct a cycle $C'$ satisfying  the conditions of Claim~\ref{ClaimDegreeInvariance}.

If (a) holds, let $C'=(C\cup P)\setminus\{x_2, \dots, x_{i-1}, y_2, \dots, y_{j-1}\}$.  By adding the edges $x_iy_j, p_x x_1$, and $p_y y_1$, notice that this is indeed a cycle.

If (b) holds, note that there is a path $Q$ contained in $G\setminus C$ between $x_i$ and $y_j$.  The paths $P$ and $Q$ are disjoint since from part (III) of Claim~\ref{1sequence}, we have that neither $x_i$ nor $y_j$ is connected to $A$.  Therefore, by joining $P$ and $Q$ to $C\setminus \{x_2, \dots, x_{i-1}, y_2, \dots, y_{j-1}\}$, we can find a cycle $C'$ with vertex set $(C\cup P\cup Q)\setminus\{x_2, \dots, x_{i-1}, y_2, \dots, y_{j-1}\}$.

We claim that in either case, the conditions of Claim~\ref{ClaimDegreeInvariance} are satisfied.  
  In each of (a) and (b),  we have $C\setminus C'=\{x_2, \dots, x_{i-1}, y_2, \dots, y_{j-1}\}$, and so all the vertices in $C\setminus C'$ have small neighbourhood by the assumption of the lemma.
%Suppose that a vertex $v$ has $4$ or more neighbours in $\{x_2, \dots, x_{i-1}, y_2,\dots,y_{j-1}\}$. 
Let $v$ be a vertex not in $C'$. We need to show that $|N(v)\cap (C\setminus C')|\leq 3$.

Suppose that $v\not\in C$. Then $v$ cannot have neighbours in both $\{x_2, \dots, x_{i-1}\}$ and $\{y_2,\dots,y_{j-1}\}$ since otherwise we would have a shorter pair of sequences satisfying (b). Also, $v$ can have at most $3$ neighbours in each of $\{x_2, \dots, x_{i-1}\}$ and $\{y_2, \dots, x_{j-1}\}$ since otherwise we would have a sequence satisfying the condition (II) of Claim~\ref{1sequence}. Therefore $|N(v)\cap (C\setminus C')|\leq 3$.

Suppose that $v\in \{x_2, \dots, x_{i-1}\}$.  Then $v$ can have at most $2$ neighbours in $\{x_2, \dots, x_{i-1}\}$ since no sequence of vertices in $C$ satisfying condition (I) of Claim~\ref{1sequence}.  Also $v$ cannot have any neighbours in $\{y_2, \dots, y_{j-1}\}$ since otherwise $x_1, \dots, v$ and $y_1, \dots, y_j$ are a shorter pair of sequences satisfying (a). Therefore $|N(v)\cap (C\setminus C')|\leq 3$.

Similarly if $v\in \{y_2, \dots, y_{j-1}\}$ then it has at most $2$ neighbours in $\{y_2, \dots, y_{j-1}\}$ and no neighbours in $\{x_2, \dots, x_{i-1}\}$, implying $|N(v)\cap (C\setminus C')|\leq 3$. 

Therefore the conclusion of Claim~\ref{ClaimDegreeInvariance} holds for $C'$, and in particular we have $\Delta(G\setminus C')= \Delta(G\setminus C)=\Delta(G\setminus(C\cup C'))$ and  $f(G\setminus C')= f(G\setminus C)=f(G\setminus (C\cup C'))$.  
Since $\Delta(A)=\Delta(G\setminus C)$, we have $f(G\setminus (C\cup C'))\leq f(G\setminus C)-|P|$ which contradicts part (3) of Claim~\ref{ClaimDegreeInvariance}.
\end{proof}

We now prove the theorem.
Notice that as a consequence of part (III) of Claim \ref{1sequence}, there is at least one vertex  $c\in C$ which is not connected to $A$ 
(since otherwise, for $|C|\geq 2$, we would have two adjacent vertices connected to $A$, which is excluded by part (III) of Claim \ref{1sequence}. If $|C|=1$, then joining the vertex in $C$ to any $a \in A$ gives a new cycle with either smaller $\Delta(G\setminus C)$ or $f(G\setminus C)$).

By $k$-connectedness of $G$ there are $k$ vertices, $a_1, \dots, a_k$, on $C$ which are connected to~$A$ (since the set of vertices on $C$ connected to $A$ form a cutset separating $c$ from $A$).
For each $i\in \{1, \dots, k\}$, let $b_i$ be the first clockwise vertex after $a_i$ such that $b_i$ has large neighbourhood. Since all the vertices $a_i$ have large neighbourhood, the vertices $b_1, \dots, b_k$ are well-defined and distinct.

If for any $i$, $b_i$ is connected to $A$, then the clockwise sequence between $a_i$ and $b_i$ satisfies condition (III) of Claim~\ref{1sequence}, leading to a contradition.
Therefore, we can assume that $b_i$ is not connected to $A$. In particular, this implies that $b_i$ lies  inside the clockwise interval between $a_{i}$ and $a_{i+1 \pmod k}$, and also $b_i\neq a_i, a_{i+1 \pmod k}$.

Suppose that two of the sets $A, \CNhood(b_1), \dots, \CNhood(b_k)$ intersect.
 We have $A\cap\CNhood(b_i)=\emptyset$  for all $i$  since $b_i$ is not connected to $A$.
If for any $i\neq j$, we have $\CNhood(b_i)\cap\CNhood(b_j)\neq \emptyset$, then the clockwise sequences between $a_i$ and $b_i$, and between $a_j$ and $b_j$ satisfy condition~(b) of Claim~\ref{2sequences}, which is a contradiction.

Therefore, we have that the sets $A, \CNhood(b_1), \dots, \CNhood(b_k)$ are all disjoint.  This, together with $|A|\geq \Delta(G\setminus C)+1$ and $|\CNhood(b_i)|\geq \Delta(G\setminus C)-3$ imply that we have 
$$|G\setminus C|\geq |A|+ |\CNhood(b_1)|+ \dots+|\CNhood(b_k)| \geq (k+1)\Delta(G\setminus C)- 3 k+1.$$
This implies that we have (\ref{EqCycleMinDegreeConn}), proving the theorem.
\end{proof}

\section{An application: Partitioning a  3-edge-coloured $K_n$ into 3 monochromatic cycles}\label{Section3Cycles}
\newcommand{\Ar}{A_{b,g}}
\newcommand{\Ab}{A_{r,g}}
\newcommand{\Ag}{A_{r,b}}
\newcommand{\Apr}{A'_{b,g}}
\newcommand{\Apb}{A'_{r,g}}
\newcommand{\Apg}{A'_{r,b}}
\newcommand{\Cr}{W_{b,g}}
\newcommand{\Cb}{W_{r,g}}
\newcommand{\Cg}{W_{r,b}}
\newcommand{\Cpr}{W'_{b,g}}
\newcommand{\Cpb}{W'_{r,g}}
\newcommand{\Cpg}{W'_{r,b}}
\newcommand{\Dr}{D_{b,g}}
\newcommand{\Db}{D_{r,g}}
\newcommand{\Dg}{D_{r,b}}
\newcommand{\Dpr}{D'_{b,g}}
\newcommand{\Dpb}{D'_{r,g}}
\newcommand{\Dpg}{D'_{r,b}}
\newcommand{\Er}{E_{b,g}}
\newcommand{\Eb}{E_{r,g}}
\newcommand{\Eg}{E_{r,b}}
\newcommand{\Fr}{F_{b,g}}
\newcommand{\Fb}{F_{r,g}}
\newcommand{\Fg}{F_{r,b}}
\newcommand{\cCases}{41002}%{c_c}
\newcommand{\mCases}{400}%{m_{c}}
\newcommand{\kCases}{4}%{k_{c}}
\newcommand{\cDisconnecting}{c_{d}}
\newcommand{\DSize}{400}
\newcommand{\halfD}{30}
\newcommand{\DminusPaths}{130}
\newcommand{\Bipart}[3]{#1[#2, #3]}
The goal of this section is to prove Theorem~\ref{ThreeCycles}. 
When the colouring of $K_n$ is highly connected in some colour (say red), then the approach to proving Theorem~\ref{ThreeCycles} is very simple: We treat blue and green as one colour, and use Theorem~\ref{CycleMinDegreeConn} to partition $K_n$ into a red cycle $C$ and a blue-green graph $H$ with high minimum degree. 
Then we use the following theorem to partition $H$ into two disjoint monochromatic cycles.
\begin{theorem}[Letzter, \cite{LetzterMinDegree}]\label{Letzter}
There is an $n_0$ such that every $2$-edge-coloured graph $G$ of order at least $n_0$  and $\delta(G)\geq \frac{3}{4}|G|$ can be covered by $2$ vertex-disjoint monochromatic cycles with different colours.
\end{theorem}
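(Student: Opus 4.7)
The goal is to partition $V(G)$ into two sets $V_R$ and $V_B$ such that the red subgraph induced on $V_R$ has minimum degree at least $|V_R|/2$ and the blue subgraph induced on $V_B$ has minimum degree at least $|V_B|/2$; applying Dirac's Theorem to each side then yields the red and blue Hamilton cycles covering $V(G)$ as required. The whole proof therefore reduces to producing such a ``Dirac-compatible'' 2-colouring of the vertex set.

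First, I would initialise with the natural majority partition: place $v \in V_R$ if $d_R(v) \geq d_B(v)$ and $v \in V_B$ otherwise. The hypothesis $\delta(G) \geq \tfrac{3}{4}|G|$ already forces every vertex of $V_R$ to have red degree at least $\tfrac{3}{8}|G|$ in all of $G$, and symmetrically in blue for $V_B$. Then I would apply a local improvement process: whenever some $v \in V_R$ has fewer than $|V_R|/2$ red neighbours inside $V_R$ (or symmetrically a blue-deficient vertex in $V_B$) I move it to the other side. To control the dynamics I would use a potential function such as the number of ``correctly coloured'' edges $e_R(V_R) + e_B(V_B)$, and argue that each swap strictly increases it, so the process terminates. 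The terminal partition is either Dirac-compatible (in which case we are done) or it is ``stuck''; the stuck configurations should be characterised by an extremal-stability argument, forcing $G$ into a short list of near-extremal colourings.

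Second, I would treat those near-extremal colourings by hand. The expected obstructions are colourings where one colour class is close to a balanced complete bipartite graph on $V(G)$ (so the other colour is close to the disjoint union of two cliques), or where every swap repairing $V_R$ immediately breaks the Dirac condition on $V_B$. In each such case, the very strong minimum-degree hypothesis on $G$ (not just on the colour classes) gives enough global edges to hand-build a red and a blue cycle directly: pick the partition dictated by the near-bipartite/clique structure, then use the $\tfrac{3}{4}|G|$ bound to repair small defects via the usual path-rotation or Pósa-type arguments. Since $n$ is assumed large enough, an absorbing-path variant can also absorb any constant-size exceptional sets.

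The main obstacle will be the near-extremal analysis. The $\tfrac{3}{4}|G|$ threshold is sharp (just below it there are 2-colourings with no such partition), so the proof must fully exploit the exact bound: one has to verify that \emph{every} terminal configuration of the swap procedure, combined with $\delta(G) \geq \tfrac{3}{4}|G|$, still admits a valid 2-monochromatic-cycle partition. Classifying these stuck configurations cleanly, and dealing with the case where the ``obvious'' partition splits $V(G)$ into very unbalanced pieces of different majority colours, is where the real difficulty lies.
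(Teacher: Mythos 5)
This statement is not proved in the paper at all: Theorem~\ref{Letzter} is quoted as a black box from Letzter's paper \cite{LetzterMinDegree} and used as an ingredient in the proof of Theorem~\ref{ThreeCycles}. There is therefore no ``paper's own proof'' to compare against, and a reviewer should note that providing a proof here is out of scope.

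That said, your sketch has a concrete gap. In the local-improvement step you claim that moving a red-deficient vertex $v$ (one with fewer than $|V_R|/2$ red neighbours inside $V_R$) to $V_B$ strictly increases the potential $e_R(V_R)+e_B(V_B)$. The change in potential is $d_B(v,V_B)-d_R(v,V_R)$, and knowing $d_R(v,V_R)<|V_R|/2$ does not by itself force $d_B(v,V_B)$ to exceed it: $v$'s blue neighbours may be concentrated in $V_R$, in which case $d_B(v,V_B)$ is small and the potential can decrease. One would need an additional counting argument relating $d_B(v,V_B)$, $d_R(v,V_R)$, $|V_R|$, $|V_B|$ and the global bound $\delta(G)\geq\frac34|G|$ before termination is guaranteed; as written the process may cycle. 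A second, subtler issue is that the target of the reduction --- a vertex partition $(V_R,V_B)$ with $\delta_R(V_R)\geq|V_R|/2$ and $\delta_B(V_B)\geq|V_B|/2$ --- is strictly stronger than the conclusion of Theorem~\ref{Letzter}: a valid cycle partition need not place Dirac graphs on both sides (one side may be Hamiltonian for non-Dirac reasons, or be a single vertex/edge under the paper's conventions), so even a correct characterisation of ``stuck'' configurations would only bound the cases your reduction cannot reach, not settle them. The entire extremal/stability analysis, which you rightly identify as where the difficulty lies, is left unaddressed, so the proposal as it stands is a plausible programme rather than a proof.
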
  
For large $|G|$, the above theorem solved a conjecture of Balogh, Bar\'at, Gerbner, Gy\'arf\'as, and S\'ark\"ozy \cite{Balogh}.
Previously  Balogh, Bar\'at, Gerbner, Gy\'arf\'as, and S\'ark\"ozy \cite{Balogh} proved an approximate version of Theorem~\ref{Letzter}---that every $G$ with $\delta(G)\geq (3/4+o(1))|G|$ contains two disjoint monochromatic cycles covering $(1-o(1))|G|$ vertices. Subsequently DeBiaso and Nelsen~\cite{DeBiasioNelsen} proved a stronger approximate version---that every $G$ with $\delta(G)\geq (3/4+o(1))|G|$ contains two disjoint monochromatic cycles covering all the vertices. 
For our purposes we could use DeBiaso and Nelsen's result instead of Theorem~\ref{Letzter} in the proof of Theorem~\ref{ThreeCycles}.

The case when $K_n$ is not highly connected in any colour takes some extra work. To deal with this case, we first  prove a lemma classifying the structure of such colourings (Lemma~\ref{CasesLemma}.)

We introduce some notation which will help us deal with coloured graphs throughout this section.
For two sets of vertices $S$ and $T$ in a graph $G$, let $\Bipart{G}{S}{T}$ be the subgraph of~$G$ with vertex set $S\cup T$ with $st$ an edge of $\Bipart{G}{S}{T}$ whenever $s\in S$ and~$t \in T$.
If a graph $G$ is coloured with some number of colours we define the \emph{{red} colour class} of $G$ to be the subgraph of $G$ with vertex set $V(G)$ and edge set consisting of all the {red} edges of $G$.  
We say that $G$ is $k$-\emph{connected in} {red}, if the {red} colour class is a $k$-verex-connected graph. Similar definitions are made for the colours blue and green as well. 
We will need the following special 3-colourings of the complete graph.
\begin{definition}\label{4partitedefinition}
 Suppose that the edges of $K_n$ are coloured with three colours.  We say that the colouring is \textbf{4-partite} if there exists a partition of the vertex set into four nonempty sets $A_1$, $A_2$, $A_3$, and $A_4$ such that the following hold.
\begin{itemize}
 \item The edges between $A_1$ and $A_4$, and the edges between $A_2$ and $A_3$  are {red}.
 \item The edges between $A_2$ and $A_4$, and the edges  between  $A_1$ and $A_3$ are {blue}. 
 \item The edges between $A_3$ and $A_4$, and the edges  between $A_1$ and $A_2$ are {green}. 
\end{itemize}
 The edges within the sets $A_1$, $A_2$, $A_3$, and $A_4$ can be coloured arbitrarily. 
\end{definition}
See Figure~\ref{figure4partite} for an illustration of a 4-partite colouring of $K_n$.  The sets $A_1$, $A_2$, $A_3$, and $A_4$  in Definition~\ref{4partitedefinition} will be called the ``classes" of the $4$-partition. 
\begin{figure}
  \centering
    \includegraphics[width=0.4\textwidth]{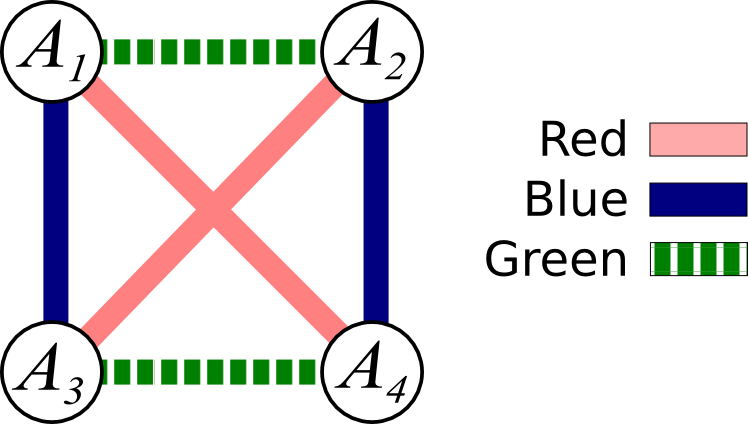}
  \caption{A 4-partite colouring of $K_n$.} \label{figure4partite}
\end{figure}
The following lemma gives a useful alternative characterization of $4$-partite colourings of $K_n$.
\begin{lemma} \label{4partite}
Suppose that the edges of $K_n$ are coloured with three colours.  The colouring is 4-partite if and only it is disconnected in each colour and there is a {red} connected component $C_1$ and a {blue} connected component $C_2$ such that all of the sets $C_1\cap C_2$, $(V(K_n)\setminus C_1)\cap C_2$, $C_1\cap (V(K_n)\setminus C_2)$, and $(V(K_n)\setminus C_1)\cap (V(K_n)\setminus C_2)$ are nonempty.
\end{lemma}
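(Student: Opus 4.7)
The plan is to prove both directions by extracting an explicit $4$-partition from the component structure. For the forward direction ($\Rightarrow$) I will choose the natural candidates coming from the definition: given a $4$-partite colouring with classes $A_1,A_2,A_3,A_4$, set $C_1=A_1\cup A_4$ and $C_2=A_1\cup A_3$. Since all bipartite parts between the $A_i$'s are monochromatic, $C_1$ is connected in red (the bipartite red graph between $A_1$ and $A_4$ is complete) and no red edge leaves $C_1$, so $C_1$ is indeed a red connected component; similarly $C_2$ is a blue component. The complementary sets $A_2\cup A_3$ and $A_2\cup A_4$ give additional red and blue components, so both colours are disconnected, and $A_1\cup A_2$, $A_3\cup A_4$ are two green components, so green is disconnected. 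Finally a direct computation shows $C_1\cap C_2=A_1$, $(V\setminus C_1)\cap C_2=A_3$, $C_1\cap(V\setminus C_2)=A_4$, $(V\setminus C_1)\cap(V\setminus C_2)=A_2$, all nonempty.

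For the converse direction ($\Leftarrow$) I will just take the four sets handed to us by the hypothesis as the candidate $4$-partition: $A_1=C_1\cap C_2$, $A_3=(V\setminus C_1)\cap C_2$, $A_4=C_1\cap(V\setminus C_2)$, $A_2=(V\setminus C_1)\cap(V\setminus C_2)$, all nonempty. Then I will read off constraints on the bipartite parts between them using the basic fact that no red edge leaves the red component $C_1$ and no blue edge leaves the blue component $C_2$. This immediately forces edges in $\Bipart{K_n}{A_1}{A_2}$ and in $\Bipart{K_n}{A_3}{A_4}$ to be green (both monochromatic colours are ruled out), and it forces $\Bipart{K_n}{A_1}{A_4}$, $\Bipart{K_n}{A_2}{A_3}$ to be red-or-green, and $\Bipart{K_n}{A_1}{A_3}$, $\Bipart{K_n}{A_2}{A_4}$ to be blue-or-green.

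The main obstacle is eliminating the spurious ``green'' option in these last four bipartite parts; this is where the hypothesis that green is disconnected enters. I will argue by contradiction: assume a green edge $uv$ exists, say with $u\in A_1$, $v\in A_3$, and let $G_{uv}$ denote its green component. Using that $\Bipart{K_n}{A_1}{A_2}$ is entirely green, every $a_2\in A_2$ lies in $G_u=G_{uv}$ (via the edge $ua_2$), and then every $w\in A_1$ lies in $G_{uv}$ (via any such $a_2$). Symmetrically, using $\Bipart{K_n}{A_3}{A_4}$ is entirely green, $A_3\cup A_4\subseteq G_{uv}$. Therefore $G_{uv}=V(K_n)$, contradicting the assumption that green is disconnected. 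The same argument, with the roles of the colours permuted, handles the other three bipartite parts, and completes the proof that the colouring is $4$-partite with classes $A_1,A_2,A_3,A_4$.
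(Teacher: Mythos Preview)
Your proof is correct and follows essentially the same approach as the paper's: define the four candidate classes as the intersections of $C_1,\,V\setminus C_1$ with $C_2,\,V\setminus C_2$, use the component property to forbid red/blue across the appropriate pairs, and then use green-disconnectedness to kill the residual green option (the paper states this last step more tersely, but the underlying argument is identical). Two cosmetic remarks: your labelling of the $A_i$ differs from the paper's (harmless), and the phrase ``with the roles of the colours permuted'' is slightly off --- all four remaining bipartite parts are handled by the \emph{same} green-connectivity contradiction, not by permuting colours.
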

\begin{proof}
Suppose that we have a {red} component $C_1$ and a {blue} component $C_2$ as in the statement of the lemma.  Let  $A_1=C_1\cap (V(K_n)\setminus C_2)$, $A_2=(V(K_n)\setminus C_1)\cap C_2$, $A_3=(V(K_n)\setminus C_1)\cap (V(K_n)\setminus C_2)$, and $A_4=C_1\cap C_2$.  

Since $C_1$ and $C_2$ are red and blue components respectively, all the edges between $A_1$ and $A_2$ and between $A_3$ and $A_4$ are green.  Since $K_n$ is not connected in green, there cannot be any green edges between $A_1$ and $A_3$.  Therefore, since $A_1\subseteq C_1$ and $A_3\cap C_1=\emptyset$, all the edges between $A_1$ and $A_3$ are blue.  Similarly, the edges between $A_1$ and $A_4$ are all red.  Since $K_n$ is not connected in red or green, the edges between $A_2$ and $A_4$ are all blue.  Since $K_n$ is not connected in blue or green, the edges between $A_2$ and $A_3$ are all red.
This ensures that the sets $A_1$, $A_2$, $A_3$, and $A_4$ form the classes of a 4-partite colouring of $K_n$.

For the converse, suppose that $A_1$, $A_2$, $A_3$, and $A_4$  form the classes of a 4-partite colouring.  Choose $C_1=A_1\cup A_4$ and $C_2=A_2 \cup A_4$ to obtain components as in the statement of the lemma.  
\end{proof}

We'll need the following lemma.
\begin{lemma}\label{LemmaManyComponents}
For every $k$ and $m$, every graph $G$ has a set $S\subseteq V(G)$ with $|S|\leq (k-1)m$ such $G\setminus S$ is partitioned  into connected componets $C_1, \dots, C_t$ satisfying one of the following.
\begin{enumerate}[\normalfont(a)]
\item $t\geq m$.
\item For each $t\in \{1,\dots,  t\}$, $C_i$ is either $k$-connected or complete.
 \end{enumerate}
\end{lemma}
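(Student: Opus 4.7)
The plan is to construct $S$ by iterative refinement. Start with $S_0 = \emptyset$ and at step $i$ look at the components of $G\setminus S_i$. If $G\setminus S_i$ already has at least $m$ components, output $S = S_i$ and we are in case (a). If every component of $G\setminus S_i$ is either $k$-connected or complete, output $S=S_i$ and we are in case (b). Otherwise, select a component $C$ of $G\setminus S_i$ which is neither $k$-connected nor complete, pick a cut set $T\subseteq V(C)$ of size at most $k-1$ which disconnects $C$, and set $S_{i+1}=S_i\cup T$.

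The main thing to verify is that such a cut set $T$ always exists, and this requires a short case split depending on $|C|$. Since $C$ is connected and non-complete, its vertex-connectivity $\kappa(C)$ satisfies $\kappa(C)\leq |C|-2$. If $|C|\leq k$ this already gives $\kappa(C)\leq k-2$, so some set of at most $k-2$ vertices separates $C$. If instead $|C|\geq k+1$, then the assumption that $C$ is not $k$-connected directly yields $\kappa(C)\leq k-1$. In either case there is a separator $T\subseteq V(C)$ with $|T|\leq k-1$, and removing it splits $C$ into at least two components. Because $C$ has no edges to the rest of $G\setminus S_i$, the components of $G\setminus S_{i+1}$ are exactly the components of $C\setminus T$ together with the untouched components of $G\setminus S_i$ other than $C$, so the total number of components strictly increases.

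Therefore after $j$ iterations the graph $G\setminus S_j$ has at least $j+1$ components (assuming $V(G)\neq\emptyset$; the empty case is trivial with $S=\emptyset$). Hence after at most $m-1$ iterations the stopping condition $t\geq m$ necessarily triggers, so the process halts within $m-1$ rounds. Each round adds at most $k-1$ vertices to $S$, giving the bound $|S|\leq (m-1)(k-1)\leq (k-1)m$, and the final $S$ satisfies one of (a) or (b) by construction.

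I do not expect any serious obstacle. The only subtle point is the definition-sensitive verification that non-complete, non-$k$-connected components always admit a separator of size at most $k-1$, which is handled by the two-case argument above.
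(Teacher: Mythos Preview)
Your argument is correct and follows essentially the same route as the paper: both proceed by repeatedly removing a cutset of size at most $k-1$ from a component that is neither $k$-connected nor complete, gaining at least one new component each time, and stopping once either (a) or (b) holds. Your iterative phrasing is equivalent to the paper's induction on $m$, and you are in fact slightly more careful in justifying the existence of the small separator via the case split on $|C|$, which the paper simply asserts.
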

\begin{proof}
The proof is by induction on $m$. The initial case $m=1$ is true since we can take $S=\emptyset$ and $C_1=V(G)$ which satisfies (a).

Suppose that the lemma is true for some $m\geq 1$. Let $G$ be a graph.  We will show that $G$ has a paritition onto $S, C_1, \dots, C_t$ satisfying (a) or (b) for $m'=m+1$.

By induction we have a set $S$ with $|S|\geq (k-1)m$ such that $G\setminus S$ has a partition into connected componets $C_1, \dots, C_t$, with satisfying either (a) or (b). If  $C_1, \dots, C_t$ satisfy (b) or if $t \geq m+1$, then we are done.
Therefore we have that $t=m$, and there is some $i$ for which $C_i$ is neither $k$-connected nor complete. Since $C_i$ is neither $k$-connected nor complete $C_i$ has a cutset $T$ with $|T|\leq k-1$ separating $C_i$ into at least $2$ components. Now $S'=S\cup T$ is a set with $|S'|\leq (k-1)(m+1)$ such that $G\setminus S$ has a $\geq m+1$ components and so satisfies (a).
\end{proof}

A corollary of the above  lemma is that every sufficiently large graph $G$ contains either a large $k$-connected subgraph or the complement of $G$ contains a large complete bipartite graph. Let $Conn(G)$ be the order of the largest connected component of $G$.
\begin{lemma}\label{Disconnecting}
For every $k$ and $m$, there exists a  constant $\cDisconnecting(k,m)\leq km$  such that the following holds.
Every graph $G$ contains a set $T$ of vertices such that $|T|\leq \cDisconnecting(k,m)$, and one of the following holds.
\begin{enumerate}[\normalfont(i)]
\item $G\setminus T$ is either $k$-connected or complete.
\item $Conn(G\setminus T)\leq |G\setminus T|-m$.
 \end{enumerate}
\end{lemma}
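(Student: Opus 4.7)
My plan would be to construct $T$ greedily, iteratively removing small cut-sets of the largest connected component of $G\setminus T$. I would first dispose of the trivial case $|G|\leq km$ by setting $T=V(G)-v$ for any vertex~$v$, so that $G\setminus T$ is a single vertex (hence complete) and $|T|=|G|-1\leq km$ satisfies~(i). So in what follows I may assume $|G|>km$.

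Starting from $T=\emptyset$, I would iterate the following step. Let $C$ be the largest component of $G\setminus T$. (a) If $G\setminus T$ is $k$-connected or complete, stop with case~(i). (b) If the non-largest components of $G\setminus T$ have total size at least~$m$, stop with case~(ii). (c) If $C$ itself is $k$-connected or complete, absorb the (fewer than $m$) non-$C$ vertices of $G\setminus T$ into $T$ and stop with case~(i), since then $G\setminus T=C$. (d) Otherwise $C$ has a cut-set $U\subseteq V(C)$ of size at most $k-1$; add $U$ to $T$ and iterate.

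The main observation is that each pass through step~(d) strictly increases the number of components of $G\setminus T$, because $C\setminus U$ breaks into at least two pieces while every other component is unchanged. Hence after $i$ such iterations $G\setminus T$ has at least $i+1$ components, and so after $m$ iterations its non-largest components sum to at least~$m$, forcing step~(b) to fire. If the loop terminates via~(b) after $i\leq m$ iterations then $|T|\leq m(k-1)\leq km$; if it terminates via~(c) after $i$ iterations, then necessarily $i\leq m-1$ (otherwise~(b) would already have fired), so absorbing at most $m-1$ extra vertices gives $|T|\leq (m-1)(k-1)+(m-1)=k(m-1)\leq km$.

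The only real subtlety is the bookkeeping in branch~(c): one has to make sure that the accumulated cut-sets together with the final batch of absorbed non-$C$ vertices fit within the $km$ budget, and this is exactly what the component-counting bound above pins down. I expect no genuine obstacle beyond this, since the whole argument rests on the trivial fact that removing a cut-set from a connected subgraph increases the total component count by at least one.
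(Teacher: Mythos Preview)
Your proposal is correct and follows essentially the same approach as the paper: the paper first proves an auxiliary lemma (by induction on $m$) that iteratively removes cutsets of size $\leq k-1$ until either there are $\geq m$ components or every component is $k$-connected/complete, and then deduces the result by absorbing the small non-largest components into $T$ if necessary. Your direct iterative argument with branches (a)--(d) is exactly this, just packaged without the intermediate lemma; your bookkeeping for the $km$ budget matches the paper's, and your handling of the termination via (c) is precisely the paper's final step of setting $T = S \cup C_2 \cup \dots \cup C_t$.
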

\begin{proof}
Apply Lemma~\ref{LemmaManyComponents} to $G$ with in order to get a set $S$ with $|S|\leq (k-1)m$ such that $G\setminus S$ has a partition into components $C_1, \dots, C_t$ satisfying (i) or (ii). Without loss of generality $|C_1|\geq |C_2|\geq\dots\geq |C_t|$. If $|C_2|+|C_3|+\dots+|C_t|\geq m$, then we get $Conn(G\setminus S)=|C_1|\leq |G|-m$ and so (i) holds. 
Therefore, we can suppose that  $|C_2|+|C_3|+\dots+|C_t|< m$ holds. In particular this means that $t\leq m-1$, and hence part (b) of Lemma~\ref{LemmaManyComponents} holds. Let $S'=S\cup C_2\cup\dots\cup C_t$ to get a set with $|S'|\leq |S|+|C_2|+|C_3|+\dots+|C_t|\leq km$ such that $G\setminus S'=C_1$ which is $k$-connected.
\end{proof}

We combine Lemma~\ref{Disconnecting} with Lemma~\ref{4partite} in order to classify colourings of $K_n$ which do not have a large monochromatic highly connected component.
\begin{lemma}\label{CasesLemma}
%For every $\kCases$ and $\mCases$ there exists a constant $\cCases=k^3\mCases$ such that the following holds.
 Suppose that the edges of $K_n$ are coloured with $3$ colours.  There is a set of vertices $S\subseteq V(K_n)$ of order less than $\cCases$ such that $K_n\setminus S$ satisfies one of the following.
 \begin{enumerate}[\normalfont(i)]
  \item $K_n\setminus S$ is either empty or $\kCases$-connected in some colour.
  \item $K_n\setminus S$ is $4$-partite such that every class of the 4-partition has at least $3$ vertices in it.
  \item The vertices of $K_n\setminus S$ can be partitioned into four sets $\Ag$, $\Ar$, $\Ab$, and $W$ with the following properties.  
  \begin{itemize}
   \item $\Bipart{K_n}{\Ab}{\Ar}$ is green, $\Bipart{K_n}{\Ag}{\Ar}$ is blue, and $\Bipart{K_n}{\Ag} {\Ab}$ is red.
   \item The edges in $\Bipart{K_n}{\Ag}{W}$ are red or blue. \\The edges in $\Bipart{K_n}{\Ab}{W}$ are red or green.  \\The edges in $\Bipart{K_n}{\Ar}{W}$ are blue or green.
  \item $|\Ag|, |\Ar|, |\Ab| \geq \mCases$.
  \end{itemize}

 \end{enumerate}
\end{lemma}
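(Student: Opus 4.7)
The plan is to apply Lemma~\ref{Disconnecting} iteratively to each of the three colour classes and then analyse the resulting graph via Lemma~\ref{4partite}. First, I would apply Lemma~\ref{Disconnecting} in turn to the red, blue and green colour classes of $K_n$, using $k = \kCases$ and a decreasing sequence of parameters $m_1 \geq m_2 \geq m_3$, each at least $\mCases$ and each large enough that the later removals do not erode the component-gap guaranteed at its own step. If any of these three applications returns the ``$k$-connected or complete'' alternative, case (i) of the lemma follows immediately, with $S$ being the removed vertices so far. Otherwise we accumulate a vertex set $S_0$ of total size less than $\cCases$, and $G' := K_n \setminus S_0$ is disconnected in all three colours with each colour having at least $\mCases$ vertices outside its largest component.

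Next, let $R$ and $B$ be the largest red and blue components of $G'$ and split $V(G')$ into the four regions $P_{11} := R \cap B$, $P_{10} := R \setminus B$, $P_{01} := B \setminus R$, and $P_{00} := V(G') \setminus (R \cup B)$. If all four regions are nonempty, Lemma~\ref{4partite} forces the colouring of $G'$ to be $4$-partite with these as classes; any class of size less than $3$ is moved into $S_0$ (at most a constant many extra vertices), giving case (ii). If some region is empty then, since $R$, $B$, $V(G')\setminus R$, $V(G')\setminus B$ are all nonempty, the missing region must be $P_{00}$ (after possibly swapping the roles of the three colours), so that $R \cup B = V(G')$. The edges between $P_{10}$ and $P_{01}$ are then forced to be green (they cross both a red and a blue component), and setting $\Ab := P_{10}$ and $\Ar := P_{01}$ matches two of the three bipartite constraints of case (iii), with $|\Ab|, |\Ar| \geq \mCases$ following from the gaps established in Step 1.

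The main obstacle is extracting the third set $\Ag$ (and the leftover set $W$) from $P_{11}$ and verifying the remaining colour constraints---in particular that no green edges cross the $\Ag$--$W$ boundary and that $|\Ag| \geq \mCases$. I would define $\Ag$ to consist of those vertices of $P_{11}$ whose bipartite edges to $P_{10}$ are all red, whose bipartite edges to $P_{01}$ are all blue, and which are green-separated from the rest of $P_{11}$, and set $W := P_{11} \setminus \Ag$. The size lower bound on $\Ag$ is obtained by exploiting green-disconnection of $G'$, possibly after retrying the analysis with a different pair of colours (red--green or blue--green) playing the role of $(R,B)$; any vertex which is ``bad'' under every such pairing is absorbed into $S$. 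The parameter slack in Step 1 is generous enough that the final set $S$ still has $|S| < \cCases$, which is how the constant $\cCases = 41002$ arises.
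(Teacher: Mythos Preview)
Your overall structure---iterate Lemma~\ref{Disconnecting} over the three colours, then analyse the component structure via Lemma~\ref{4partite}---matches the paper's. But the heart of the argument, how to produce case~(iii), has a genuine gap.

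The key missing idea is the introduction of a \emph{green component} to carve $\Ag$ out of $R\cap B$. In the paper, once $R$ and $B$ are fixed and $R\cup B$ covers all but at most two vertices, one observes that every edge between $R\setminus B$ and $B\setminus R$ is green, so there is a green component $G$ containing both. One then sets $\Ab=R\cap G\setminus B$, $\Ar=B\cap G\setminus R$, $\Ag=R\cap B\setminus G$, and $W=R\cap B\cap G$. All the bipartite colour constraints of case~(iii) now fall out mechanically from the component memberships (e.g.\ $K_n[\Ag,\Ab]$ is red because $\Ag\subseteq B$, $\Ab\not\subseteq B$ forbids blue, and $\Ag\cap G=\emptyset$, $\Ab\subseteq G$ forbids green). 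Crucially, the size bound $|\Ag|\geq\mCases$ follows immediately from $(R\cap B)\cup G=V(K_n\setminus S')$ combined with the fact that $|G|\leq |K_n\setminus S'|-\mCases$, the latter being exactly what Step~1 guarantees for the green colour. Your definition of $\Ag$ via ad hoc colour conditions on individual edges, together with the vague plan of ``retrying with a different pair of colours'' and absorbing bad vertices into $S$, does not give any mechanism for the lower bound, and it is not clear it can be made to work without essentially rediscovering the green-component trick.

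There is a second, related gap: when the colouring is $4$-partite but some class $A_i$ has fewer than $3$ vertices, moving $A_i$ into $S$ leaves only three nonempty classes, so you are no longer in case~(ii). The paper handles this by observing that the two large monochromatic components opposite $A_i$ satisfy the hypotheses of the green-component argument above (they cover all but $|A_i|\leq 2$ vertices), thereby landing in case~(iii). So the same missing idea is needed here too. Finally, your claim that ``the missing region must be $P_{00}$'' is not justified by the sets $R$, $B$, $V(G')\setminus R$, $V(G')\setminus B$ being nonempty; the paper avoids this by taking $R$ to be the largest monochromatic component in \emph{any} colour and $B$ to be a component of another colour not contained in $R$, which forces $R\cap B$, $R\setminus B$, $B\setminus R$ all nonempty directly.
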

\begin{proof}

%For convenience, fix $m_1= \mCases+2$.
Let $c_g=\cDisconnecting(\kCases,402)$, 
$c_b=\cDisconnecting(\kCases, c_g+402)$, and
$c_r=\cDisconnecting(\kCases, c_b+c_g+402)$, where $\cDisconnecting(*,*)$ is the function in Lemma~\ref{Disconnecting}.  Notice that $c_g+c_b+c_r\leq 41000$.
Suppose that (i) does not hold in any colour.

Since (i) does not hold in red, Lemma~\ref{Disconnecting} implies that there is a set $S_1$ with $|S_1|\leq c_r$ such that $K_n\setminus S_1$ has no red components of order more that $|K_n\setminus S_1|- c_b-c_g-402$.

Since (i) does not hold in blue, we can apply Lemma~\ref{Disconnecting} to the blue colour class of $K_n\setminus S_1$ to obtain a set $S_2$ with $|S_2|\leq c_b$ such that $K_n\setminus (S_1\cup S_2)$ has no red or blue components of order more than $|K_n\setminus (S_1\cup S_2)|-c_g-402$.

Since (i) does not hold in green, we can apply Lemma~\ref{Disconnecting} to the green colour class of $K_n\setminus S_1$ to obtain a set $S_3$ with $|S_3|\leq c_g$ such that $K_n\setminus (S_1\cup S_2\cup S_3)$ has no red, blue, or green components of order more than $|K_n\setminus (S_1\cup S_2\cup S_3)|-402$.

Let $S=S_1\cup S_2\cup S_3$ to get a set with $|S|\leq 41000$.  We have that $K_n\setminus S$ has no monochromatic components of order more than $|K_n\setminus S|-402$. In particular this means that $K_n\setminus S$ is disconnected in each colour. 
\begin{claim}\label{CasesClaim}
Suppose that $R$ and $B$ are red and blue components of $K_n\setminus S$ respectively with $R\not\subseteq B$ and $B\not\subseteq R$.  If $K_n\setminus (S\cup R\cup B)\leq 2$, then there is a partition of $K_n$ satisfying part (iii) or the lemma. 
\end{claim}
\begin{proof}
Let $S'=K_n\setminus (R\cup B\cup G)$. By the assumption of the claim $|S'|\leq |S|+2\leq \cCases$.
Since $G\setminus S$ had no monochromatic components of order more than $|K_n\setminus S'|-402$, we get that  $G\setminus S'$ has no monochromatic components of order more that $|K_n\setminus S'|-\mCases$.
 Notice that $B\setminus R$ and $R\setminus B$ are non-empty and all the edges between these two sets are green.
Therefore, there is a green component, $G$, of $K_n\setminus S'$ containing $B\setminus R$ and $R\setminus B$.     
Let $\Ar=B\cap G\setminus R, \Ab=R\cap G\setminus B$, and $\Ag=R\cap B\setminus G$ and $W=B\cap R\cap G$.

We have that $\Ar\cap R=\emptyset$, so there are no red edges between $\Ar$ and $W$, and so all the edges in $\Bipart{K_n}{\Ar} {W}$ are blue or green. Similarly, we get that there are no red edges between $\Ar$ and $\Ab$ (since $\Ar\cap R=\emptyset$), and no blue edges between $\Ar$ and $\Ab$ (since $\Ab\cap B=\emptyset$).  This means that $\Bipart{K_n}{\Ab} {\Ar}$ is green.  By the same argument, it is easy to see that the other colours between pairs of $\Ab$, $\Ar$, $\Ag$, and $W$ are as specified in case (iii) of the lemma.

Notice that from the definition of $G$ and $S'$ we have $(B\cap G)\cup R=(G\cap R)\cup B=(R\cap B)\cup G=K_n\setminus S'$.
Combining $(B\cap G)\cup R=K_n\setminus S'$ with the fact that $|R|\leq K_n\setminus S'-\mCases$ we get $|\Ar|\geq |B\cap G|-|R|\geq |K_n\setminus S'|-|R|\geq \mCases$. Similarly we get $|\Ab|\geq \mCases$ and $|\Ag|\geq \mCases$, proving the claim.
\end{proof}

Suppose that the colouring on $K_n\setminus S$ is 4-partite.  Let $A_1$, $A_2$, $A_3$, and $A_4$ be the classes of the 4-partition of $K_n\setminus S$.
Either part (ii) of the lemma holds, or $|A_i|\leq 2$ for some $i$.  Without loss of generality, we may suppose that $i=4$.  In this case the red component $A_1\cup A_2$ and the blue component $A_2\cup A_3$ satisfy the conditions of Claim~\ref{CasesClaim} which implies that there is a partition of $K_n\setminus S$ as in part (iii) of the lemma.

Suppose that the colouring on $K_n\setminus S$ is not 4-partite.
Let $R$ be the largest component of $K_n\setminus S$ in any colour.  Without loss of generality, we may suppose that $R$ is red.  Let $B$ be a blue component which is not contained in $R$. Let $A_1=R\cap B$, $A_2=R\setminus B$,  $A_3=B\setminus R$, and $A_4=K_n\setminus (S\cup A\cup B)$.  Notice that $A_3$ is non-empty since $B\not\subseteq R$, $A_2$  is non-empty since $|R|\geq |B|$, and $A_1$ is nonempty, since otherwise $\Bipart{K_n}{R}{B}=\Bipart{K_n}{A_1}{A_2}$ would be a green complete bipartite graph, contradicting $R$ being the largest component.  Therefore, by Lemma~\ref{4partite}, since $K_n\setminus S$ is not 4-partite, we have that $|A_4|=\emptyset$.  We can apply Claim~\ref{CasesClaim} in order to find a partition as in part (iii) of the lemma.
\end{proof}

In order to prove Theorem~\ref{ThreeCycles}, we will need a number of results about partitioning coloured graphs into monochromatic subgraphs.  One of these is the 3-colour case of a weakening of Conjecture~\ref{Erdos} when ``cycles'' is replaced by ``paths''.

\begin{theorem} [\cite{PokrovskiyCycles}]\label{ThreePaths}
 Suppose that the edges of $K_n$ are coloured with $3$ colours.  Then $K_n$ can be partitioned into $3$ monochromatic paths.
\end{theorem}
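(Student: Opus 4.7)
The plan is to prove this by extremality. Among all choices of three pairwise vertex-disjoint monochromatic paths $P_r, P_b, P_g$ in $K_n$ of colours red, blue, and green respectively (where, by the paper's convention, a path may be empty or a single vertex), pick one that covers the maximum total number of vertices. The aim is to show that the three paths necessarily cover $V(K_n)$. Suppose for contradiction that some $v \in V(K_n)$ is uncovered, and write $P_r = r_1 \dots r_a$, $P_b = b_1 \dots b_c$, $P_g = g_1 \dots g_e$. The degenerate cases where one of the paths is empty or a single vertex are structurally easier --- one can apply the two-colour Gerencsér--Gyárfás theorem to the remaining vertices in the other two colours --- so I would dispose of them first and afterwards assume that all three paths have at least two vertices.

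The first observation is a maximality constraint on the uncovered vertex. The edge $vr_1$ cannot be red, for otherwise $vr_1 r_2 \dots r_a$ would be a strictly longer red path giving a configuration that covers more vertices. Similarly $vr_a$ is not red, $vb_1, vb_c$ are not blue, and $vg_1, vg_e$ are not green. Thus each of the six endpoint edges from $v$ has one of two permitted colours, giving a finite list of cases.

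The second ingredient is a family of swap lemmas. For instance, if $vr_1$ is blue and the edge $r_1 b_c$ is also blue, then $v r_1 b_c b_{c-1} \dots b_1$ is a blue path, and together with the shortened red path $r_2 \dots r_a$ and the unchanged green path $P_g$ we obtain three vertex-disjoint monochromatic paths covering strictly more vertices --- contradicting maximality. Many analogous swaps between endpoints of different paths rule out additional colour patterns on the endpoint-to-endpoint edges. One would collect these forbidden patterns into a clean list before starting the main analysis.

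The main case analysis then proceeds by examining the colour pattern of the six endpoint edges from $v$, together with the colours of edges between endpoints of distinct paths. I expect the chief obstacle to be the residual configurations in which every one-step swap is blocked, i.e.\ where the forbidden colours on the endpoint edges are arranged so symmetrically that no single rearrangement works. The resolution in these residual cases should come from either a two-step rearrangement (e.g.\ first rotate within $P_r$ to bring a new endpoint, then perform a swap), or from applying the two-colour Gerencsér--Gyárfás path-partition theorem to a sub-complete graph such as the one spanned by $v$ together with one of the three original paths, using only two of the three colours on that subgraph; the two resulting paths plus the other two unchanged paths must then be reconciled into the required three, which is where I expect the argument to be most delicate.
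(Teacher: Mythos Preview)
The paper does not prove Theorem~\ref{ThreePaths} at all; it is quoted from \cite{PokrovskiyCycles} and used as a black box. So there is no in-paper argument to compare against, and the relevant benchmark is the full proof in that reference, which is substantially longer and more delicate than a one-page extremality sketch.

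As for the plan itself: the extremality setup and the first-order observations are correct and standard --- the six endpoint edges from $v$ must avoid their own path's colour, and one-step endpoint swaps of the type you describe do give contradictions when the relevant cross-edge has the right colour. But what you have written is, by your own description, an outline with the hard part left open. You correctly isolate the ``residual configurations'' where every one-step swap is blocked as the crux, and then offer two tentative resolutions without carrying either out. Both have concrete problems as stated. The ``rotate within $P_r$ to bring a new endpoint'' idea needs $P_r$ to have a red chord or some other structure you have not guaranteed; a bare path has only its two given endpoints. The Gerencs\'er--Gy\'arf\'as idea is more seriously underspecified: the complete graph on $\{v\}\cup V(P_r)$ is $3$-coloured in general, not $2$-coloured, so the two-colour theorem does not apply directly, and merging two colours into one would produce a bicoloured ``path'' that is useless. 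Even granting some applicable two-colour situation, you end with four paths and acknowledge that reconciling them into three is ``most delicate'' without giving any mechanism --- this is precisely the content that is missing.

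One smaller point worth flagging: you fix the three paths to have the three distinct colours and maximise under that constraint. The theorem as stated only asks for three monochromatic paths with unrestricted colours. The distinct-colours version happens to be what is proved in \cite{PokrovskiyCycles}, so this is not fatal, but you should be aware that restricting the colour pattern can in principle lower the maximum and block swaps that change a path's colour; your sample swap preserves colours, but in a full case analysis you may want that extra freedom.
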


Theorem~\ref{ThreePaths} was proved by the author in \cite{PokrovskiyCycles}.  
We'll need a lemma about partitioning a $2$-edge-coloured $K_n$ into a path and a complete bipartite graph. This lemma was first proved by Ben-Eliezer,  Krivelevich, and Sudakov in \cite{BKS} (see Lemma 4.4  and its proof.)
It later appeared in the form in which we use it in \cite{PokrovskiyCycles}.
\begin{lemma}[\cite{BKS}]\label{PathBipartite}
 Suppose that the edges of $K_n$ are coloured with two colours.  Then $K_n$ can be covered by a red path and a disjoint blue balanced complete bipartite graph.
\end{lemma}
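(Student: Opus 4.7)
The plan is to argue via a maximality argument in the spirit of the proof of Ger\'encser--Gy\'arf\'as for monochromatic Hamilton paths. Consider all triples $(P, X, Y)$ where $P$ is a red path in $K_n$ (possibly empty) and $X, Y \subseteq V(K_n) \setminus V(P)$ are disjoint with $\bigl| |X| - |Y| \bigr| \leq 1$ and with every edge between $X$ and $Y$ blue. Such triples trivially exist, e.g.\ with $P = \emptyset$ and $X = Y = \emptyset$. Among all such triples, fix one maximizing the total covered vertex count $|V(P)| + |X| + |Y|$, and subject to that, maximizing $|X| + |Y|$. Call this $(P^*, X^*, Y^*)$ and let $Z = V(K_n) \setminus (V(P^*) \cup X^* \cup Y^*)$; the goal is to show $Z = \emptyset$. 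Suppose for contradiction that $z \in Z$.

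First I would handle the easy cases. If $z$ is joined by a red edge to an endpoint of $P^*$, then $P^*$ extends to $P^* + z$, increasing coverage and contradicting maximality. If all edges between $z$ and $X^*$ are blue, then $z$ can be added to $Y^*$, with the roles of $X^*$ and $Y^*$ swapped if needed to preserve $\bigl||X|-|Y|\bigr| \le 1$; this is always possible since the imbalance was already at most one. The symmetric move works if all edges between $z$ and $Y^*$ are blue.

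Thus in the remaining case, $z$ has only blue edges to both endpoints of $P^*$, at least one red edge $zx$ with $x \in X^*$, and at least one red edge $zy$ with $y \in Y^*$. The natural move is a swap: try to replace $x$ or $y$ by $z$ in the bipartite structure and move the displaced vertex into $P^*$. This succeeds cleanly when $z$ has a unique red neighbor on each side, since then $(X^* \setminus \{x\}) \cup \{z\}$ versus $Y^* \setminus \{y\}$ (with $x, y$ absorbed into $P^*$) forms a valid balanced blue bipartite graph, provided $x$ or $y$ is joined in red to an endpoint of $P^*$ so that the absorption is possible. The main obstacle, which I expect to be the heart of the proof, is the case when $z$ has several red neighbors in $X^*$ or $Y^*$ and the displaced candidates are not red-adjacent to the endpoints of $P^*$. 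Here I would apply a P\'osa-style rotation on $P^*$: using the fact that $K_n$ is complete, the blue edges from the endpoints of $P^*$ into $X^* \cup Y^*$ force red edges elsewhere, which can be used to rotate $P^*$ until a new endpoint is red-adjacent to a candidate swap vertex. The delicate point is that each rotation must be shown to preserve both the balance condition $\bigl||X|-|Y|\bigr| \le 1$ and the completeness of the blue bipartite graph between $X$ and $Y$, which is where most of the technical work lies.
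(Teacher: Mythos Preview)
The paper does not prove this lemma at all; it simply cites it from Ben-Eliezer, Krivelevich, and Sudakov~[BKS], so there is no in-paper argument to compare against. I can therefore only assess whether your sketch stands on its own.

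Your extremal setup is reasonable and the easy cases are fine, but the hard case is a genuine gap, not a routine omission. When $z$ has red neighbours $x\in X^*$ and $y\in Y^*$ and only blue edges to the endpoints $a,b$ of $P^*$, you propose to swap $z$ in and absorb $x,y$ into $P^*$ via P\'osa rotations. Two things go wrong. First, rotations only move the endpoint of $P^*$ to another vertex already on $P^*$; for absorption you would need some vertex of $P^*$ to have a \emph{red} edge to $x$ or $y$, and nothing in your setup forces this --- it is perfectly possible that every vertex of $P^*$ sends only blue edges to $x$ and $y$. Second, even if you manage to absorb $x$ and $y$, inserting $z$ into (say) $X^*\setminus\{x\}$ requires \emph{all} edges from $z$ to $Y^*\setminus\{y\}$ to be blue, and you have not ruled out further red edges from $z$ into $Y^*$. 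Your own final paragraph concedes that ``most of the technical work lies'' here without indicating how it would be carried out; as written this is a hope, not an argument.

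There is also a secondary issue: you work throughout with $\bigl||X|-|Y|\bigr|\le 1$, but the lemma asks for a \emph{balanced} bipartite graph, i.e.\ $|X|=|Y|$. At the end you would need to correct the parity of $|V(P^*)|$, and your ``swap the roles of $X^*$ and $Y^*$ if needed'' remark does not do this: adding $z$ to the larger side can push the imbalance to $2$, and renaming the sides does not change which side $z$ is blue-complete to.
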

The following is a corollary of the above lemma.
\begin{lemma}\label{BipartitePathBipartite}
Suppose that $K_{n,n}$ is coloured with $2$ colours.
Then $K_{n,n}$ can be partitioned into one red path and two blue balanced complete bipartite graphs.
\end{lemma}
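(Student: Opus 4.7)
I plan to deduce the lemma from Lemma~\ref{PathBipartite} by reducing to an auxiliary 2-edge-colouring of $K_n$. Label the parts of $K_{n,n}$ as $X=\{x_1,\dots,x_n\}$ and $Y=\{y_1,\dots,y_n\}$, and colour the edges of $K_n$ on vertex set $[n]$ by declaring $\{i,j\}$ blue if and only if both $x_iy_j$ and $x_jy_i$ are blue in $K_{n,n}$. Applying Lemma~\ref{PathBipartite} to this auxiliary graph produces a red path $Q$ on some vertex set $R\subseteq[n]$ together with a blue balanced complete bipartite graph with parts $I,J\subseteq[n]$ (where $|I|=|J|$) which together partition $[n]$.

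The crucial observation is that the single balanced complete bipartite output in the auxiliary yields \emph{two} vertex-disjoint balanced complete bipartite subgraphs in $K_{n,n}$, namely
\[
B_1 = (\{x_i:i\in I\},\,\{y_j:j\in J\}) \qquad\text{and}\qquad B_2 = (\{x_j:j\in J\},\,\{y_i:i\in I\}).
\]
By the definition of the auxiliary colouring, every edge of $B_1$ and $B_2$ is blue in $K_{n,n}$; both are balanced because $|I|=|J|$; and the four vertex sets involved are pairwise disjoint. These two subgraphs will serve as the two blue bipartite graphs required by the lemma, leaving exactly $\{x_i,y_i : i\in R\}$ to be covered by the red path.

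For the red path, the auxiliary red path $Q=i_1i_2\cdots i_k$ guarantees, for each $t$, that at least one of $x_{i_t}y_{i_{t+1}}$ and $x_{i_{t+1}}y_{i_t}$ is red in $K_{n,n}$. The plan is to traverse $Q$ inside $K_{n,n}$ by choosing at each step the side of $K_{n,n}$ which produces a red cross-edge, and to interleave both $x_{i_t}$ and $y_{i_t}$ at each step using the diagonal edges $x_{i_t}y_{i_t}$ whenever those happen to be red.

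The main obstacle is that the auxiliary alone supplies roughly half of the red edges required: covering $\{x_i,y_i:i\in R\}$ by a Hamiltonian red path needs $2|R|-1$ edges, while the auxiliary path contributes only $|R|-1$. I anticipate closing this gap by either strengthening the auxiliary (for example, to a 3-colouring that distinguishes the ``both red'', ``both blue'', and ``mixed'' cases for each pair $\{x_iy_j, x_jy_i\}$), or by exploiting the freedom to reassign individual indices between $R$ and $I\cup J$ whenever an unfavourable diagonal $x_iy_i$ obstructs the weaving---such reassignments can be done in balanced pairs, keeping $B_1$ and $B_2$ balanced and blue while rerouting the red path through the newly released vertices.
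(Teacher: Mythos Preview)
Your approach has a genuine gap, and the obstacle you flag at the end is fatal rather than a detail to be cleaned up. Consider $n=2$ with the colouring in which $x_1y_2$ is red and $x_1y_1,\ x_2y_1,\ x_2y_2$ are all blue. In your auxiliary $K_2$ the edge $\{1,2\}$ is red (since $x_2y_1$ is blue but $x_1y_2$ is red), so Lemma~\ref{PathBipartite} returns $R=\{1,2\}$ and $I=J=\emptyset$. You are then committed to covering $\{x_1,x_2,y_1,y_2\}$ by a single red path, which is impossible since only one of the four edges is red. Neither of your suggested repairs helps here: a three-colour refinement of the auxiliary takes you outside the hypotheses of Lemma~\ref{PathBipartite}, and ``balanced reassignment'' cannot move a single index out of $R$ without destroying $|I|=|J|$. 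The deeper problem is that your auxiliary construction fixes a bijection $x_i\leftrightarrow y_i$ in advance and then asks the red path to respect it; there is no reason any particular bijection should work.

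The paper's argument sidesteps all of this by applying Lemma~\ref{PathBipartite} directly to the complete graph on the $2n$ vertices $A\cup B$, declaring every non-edge of $K_{n,n}$ to be blue. The resulting red path $P$ is automatically a genuine red path in $K_{n,n}$ (all red edges live there), so no weaving is needed. The single blue balanced bipartite graph on parts $X,Y$ is then split as $(X\cap A,\,Y\cap B)$ and $(Y\cap A,\,X\cap B)$; a two-line count using $|X|=|Y|$ and $|A\setminus P|=|B\setminus P|$ shows both pieces are balanced. In the example above this produces the red edge $x_1y_2$ and the blue $K_{1,1}$ on $\{x_2,y_1\}$, which your reduction cannot reach.
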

\begin{proof}
Let $A$ and $B$ be the parts of $K_{n,n}$. Considering non-edges of $K_{n,n}$ as blue edges, apply Lemma~\ref{PathBipartite} to the graph to obtain a partition of $K_{n,n}$ into a red path $P$ and two sets $X$ and $Y$ such that $|X|=|Y|$ and there are no red edges between $X$ and $Y$.

Notice that since $|K_{n,n}\setminus P|=|X| + |Y|$ is even and the vertices of $P$ alternate between $A$ and $B$ and  we have $|A\setminus  P|= |B\setminus P|$.
Combining $|X\cap A|+|X\cap B|=|X|=|Y|=|Y\cap A|+|Y\cap B|$ and $|X\cap A|+|Y\cap A|=|A\setminus  P|= |B\setminus P|=|X\cap B|+|Y\cap B|$ implies that we have $|X\cap A|=|Y\cap B|$ and  $|X\cap B|=|Y\cap A|$.
Notice that since all the edges between $X\cap A$ and $Y\cap B$ are present in $G$, and none of them are red, we have that the subgraph of $K_{n,n}$ on $(X\cap A)\cup (Y\cap B)$ is a blue complete bipartite graph. Since $|X\cap A|=|Y\cap B|$, this complete bipartite graph is balanced. For the same reason, the subgraph on $K_{n,n}$ on $(Y\cap A)\cup (X\cap B)$ is a blue balanced complete bipartite graphs. Thus $(X\cap A)\cup (Y\cap B)$, $(Y\cap A)\cup (X\cap B)$, and $P$, give the required partition of $K_{n,n}$. 
\end{proof}

We use Lemma~\ref{BipartitePathBipartite} to prove the following.

\begin{lemma}\label{BipartitePathsAndCycle}
 Suppose that $K_{n,n}$ is coloured with $2$ colours.
Then $K_{n,n}$ can be partitioned into two red paths and one blue cycle.
 \end{lemma}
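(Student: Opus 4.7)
My plan is to apply Lemma~\ref{BipartitePathBipartite} to the $2$-coloured $K_{n,n}$, obtaining a red path $P$ and two vertex-disjoint balanced blue complete bipartite graphs $B_1 = \alpha \cup \beta$ and $B_2 = \delta \cup \gamma$, with $\alpha, \delta \subseteq A$ and $\beta, \gamma \subseteq B$. Write $|\alpha|=|\beta|=m_1$ and $|\delta|=|\gamma|=m_2$; by construction every $\alpha$--$\beta$ and every $\delta$--$\gamma$ edge is blue, while the $\alpha$--$\gamma$ and $\beta$--$\delta$ edges (the only edges between $B_1$ and $B_2$ in $K_{n,n}$) may be of either colour.

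The easy case is when $B_1$ or $B_2$ is empty, say $B_2 = \emptyset$: set $P_1 = P$, $P_2 = \emptyset$, and let $C$ be a blue Hamilton cycle of $B_1$ (a single vertex or single edge when $m_1 \leq 1$, using the paper's convention that these count as cycles).

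For the main case, when both $B_1$ and $B_2$ are non-empty, I would first try to merge them into a single blue Hamilton cycle on $B_1 \cup B_2$. If there exist a blue edge $ac$ with $a \in \alpha$, $c \in \gamma$ and a blue edge $bd$ with $b \in \beta$, $d \in \delta$, then because each $B_i$ is a balanced complete bipartite graph there is a blue Hamilton path of $B_1$ from $a$ to $b$ and a blue Hamilton path of $B_2$ from $c$ to $d$. Gluing these together using the blue edges $ac$ and $bd$ produces a blue Hamilton cycle on $B_1 \cup B_2$, and we are done with $P_1 = P$, $P_2 = \emptyset$.

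Otherwise, without loss of generality every $\alpha$--$\gamma$ edge is red, so the bipartite subgraph on $\alpha \cup \gamma$ is a red $K_{m_1,m_2}$. The idea is to use part of this red biclique as the second red path $P_2$, chosen so that the leftover vertices (outside $V(P) \cup V(P_2)$) form a balanced bipartite subset on which the remaining blue edges contain a Hamilton cycle. For example, if $m_1 = m_2$ one takes $P_2$ to be a red Hamilton path of $K_{m_1,m_1}$ on $\alpha \cup \gamma$ and lets $C$ be a blue Hamilton cycle on $\beta \cup \delta$ (built from blue $\beta$--$\delta$ edges together with the complete blue bipartite structure inside each $B_i$); for $m_1 \neq m_2$ one adjusts how many vertices of $\alpha$ and $\gamma$ are absorbed into $P_2$ to keep the leftover balanced.

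The main obstacle is the extreme subcase where both $\alpha$--$\gamma$ and $\beta$--$\delta$ are entirely red, since then there is no blue route between $B_1$ and $B_2$ at all. Here one must very carefully balance how many of the vertices of $\alpha$, $\gamma$, $\beta$, $\delta$ go into $P_2$ (using both red bicliques) so that what is left is a balanced subset of either $\alpha \cup \beta$ or $\gamma \cup \delta$, whose internal complete blue bipartite structure then supplies the required Hamilton cycle; the bookkeeping in this case, together with verifying that $P$ itself does not disrupt the balance, is the technical heart of the argument.
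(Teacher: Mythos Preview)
Your approach has a genuine gap in the case you label 2b (all $\alpha$--$\gamma$ edges red, but not necessarily all $\beta$--$\delta$ edges red). You write that when $m_1=m_2$ one takes $P_2$ to be a red Hamilton path on $\alpha\cup\gamma$ and lets $C$ be a blue Hamilton cycle on $\beta\cup\delta$, ``built from blue $\beta$--$\delta$ edges together with the complete blue bipartite structure inside each $B_i$''. But once $\alpha$ and $\gamma$ are absorbed into $P_2$, the blue edges inside each $B_i$ (namely the $\alpha$--$\beta$ and $\delta$--$\gamma$ edges) are no longer available; the only edges left on $\beta\cup\delta$ are the $\beta$--$\delta$ edges, and you have no control over how many of those are blue. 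Concretely, take $K_{4,4}$ with $\alpha=\{a_1,a_2\}$, $\beta=\{b_1,b_2\}$, $\delta=\{d_1,d_2\}$, $\gamma=\{c_1,c_2\}$, all $a_ib_j$ and $d_ic_j$ blue, all $a_ic_j$ red, and among the $\beta$--$\delta$ edges only $b_1d_1$ blue. This is a valid output of Lemma~\ref{BipartitePathBipartite} with $P=\emptyset$, and one checks directly that $B_1\cup B_2$ cannot be partitioned into a single red path and a blue cycle: the red graph splits into two components of size four, so any red path has at most four vertices, and in every case the leftover has a vertex of blue degree at most one.

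The deeper problem is structural: you commit $P_1=P$ at the outset and then need to cover $B_1\cup B_2$ with \emph{one} red path and a blue cycle, and that simply fails in general. The paper sidesteps this entirely by applying Lemma~\ref{BipartitePathBipartite} with the colours swapped, obtaining a blue \emph{path} $Q$ and two red balanced bipartite graphs (hence two red Hamilton paths $P_1,P_2$), and then minimising $|Q|$ over \emph{all} partitions of $K_{n,n}$ into a blue path and two red paths. Minimality forces the relevant edges between the endpoints of $Q$ and the endpoints of $P_1,P_2$ to be blue, which is exactly what is needed to close $Q$ into a cycle (at the cost of removing at most one endpoint from each $P_i$). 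The optimisation over all such partitions is the key idea your argument is missing.
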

 \begin{proof}
 Let $X_1$ and $X_2$ be the parts of the bipartition of $K_{n,n}$.
By Lemma~\ref{BipartitePathBipartite}, $K_{n,n}$ can be partitioned into a blue path $Q$ and two red paths $P_1$ and $P_2$.  In addition, we may suppose that $|Q|$ is as small as possible in such a partition.  Let $q_1$ and $q_2$ be  the two endpoints of $Q$. 

Suppose that $q_1$ and $q_2$ are both in $X_1$. Then there must be an endpoint $p$, of either $P_1$ or $P_2$ which is in $X_2$. 
Notice that the edges $pq_1$ and $pq_2$ must both be blue (since otherwise we could join $q_1$ or $q_2$ to the path containing $p$ , contradicting the minimality of $|Q|$). 
This allows us to construct a blue cycle by adding the edges $q_1p$ and $q_2p$ to $Q$, which together with $P_1-p$ and $P_2-p$ gives the required partition of $K_{n,n}$.

%Notice that if for any endpoint, $p$, of $P_1$ or $P_2$, the edge $q_ip$ is present, then it must be blue (since otherwise we could add $q_i$ to either the path containing $p$ in contradicting the minimality of $|Q|$). If $q_1$ and $q_2$ are in the same part of the bipartition, then the result follows since in this case at least one endpoint, $p$,  of $P_1$ or $P_2$ must be in the bipartition not containing $q_1$ and $q_2$, which allows us to construct a blue cycle by adding the edges $q_1p$ and $q_2p$ to $Q$. Therefore, we can assume that $q_1$ and $q_2$ are in different parts of $K_{n,n}$. 

The same argument works if $q_1$ and $q_2$ are both in $X_2$. Therefore, we can assume that $q_1\in X_1$ and $q_2\in X_2$. 
This means that out of the four endpoints of $P_1$ and $P_2$, two are in $X_1$ and two are in $X_2$. Without loss of generality, we may assume that an endpoint $p_1$ of $P_1$ is in $X_1$, and an endpoint $p_2$ of $P_2$ is in $X_2$.
As before, the edges $p_1q_1$ and $p_2q_2$ must be blue (since otherwise for some $i$ we could extend $P_i$ by adding the vertex $q_i$, contradicting the minimality of $Q$).  

Suppose that the edge $p_1p_2$ is red. Then we can join $P_1$ and $P_2$ with this edge to obtain a red path spanning $P_1\cup P_2$.  This gives us a partition into two red paths $P_1\cup P_2$ and $\{q_2\}$ and a blue path $Q-q_2$.
However $|Q-q_2|=|Q|-1$ contradicting minimality of $|Q|$ in the original partition.

Therefore, we can suppose that the edge $p_1p_2$ is blue.  Notice that adding the edges $q_1p_1$, $p_1p_2$, and $p_2q_2$ to $Q$ produces a cycle.  This gives us a partition of $K_{n,n}$ into two red paths $P_1-p_1$, $P_2-p_2$ and a blue cycle $Q+p_1+p_2$.
\end{proof}

Combining the above lemma with Lemma~\ref{PathBipartite} gives us the following.
\begin{lemma}\label{PathsAndCycle}
 Suppose that $K_n$ is coloured with $3$ colours.  Then $K_n$ can be partitioned into one red path, two blue paths and one green cycle.
\end{lemma}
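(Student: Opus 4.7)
The plan is to reduce this to the two-colored bipartite case by using Lemma~\ref{PathBipartite} to isolate the red path, and then applying Lemma~\ref{BipartitePathsAndCycle} to the remaining complete bipartite graph to obtain the two blue paths and the green cycle.

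First I would merge blue and green into a single color. Call this color ``non-red'' and apply Lemma~\ref{PathBipartite} to the resulting two-edge-coloring of $K_n$, with ``red'' playing the role of red in the lemma and ``non-red'' playing the role of blue. This yields a red path $P$ in $K_n$ and a disjoint balanced complete bipartite graph $H\cong K_{m,m}$ on $V(K_n)\setminus V(P)$, all of whose edges (in the complete graph) are either blue or green.

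Next I would restore the distinction between blue and green and view $H$ as a two-edge-colored $K_{m,m}$ with colors blue and green. Applying Lemma~\ref{BipartitePathsAndCycle}, with blue playing the role of red and green playing the role of blue, produces a partition of $H$ into two blue paths $P_1$, $P_2$ and one green cycle $C$.

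Combining $P$, $P_1$, $P_2$, and $C$ yields a partition of $K_n$ into one red path, two blue paths, and one green cycle, as required. There is no real obstacle here; the argument is essentially a bookkeeping exercise that combines the two-color partition theorems already stated above.
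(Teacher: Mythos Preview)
Your proof is correct and is essentially identical to the paper's own argument: apply Lemma~\ref{PathBipartite} (treating blue and green together) to split off a red path and a blue-green balanced complete bipartite graph, then apply Lemma~\ref{BipartitePathsAndCycle} to the latter.
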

\begin{proof}
First apply Lemma~\ref{PathBipartite} in order to partition $K_n$ into a red path and a  blue-green balanced complete bipartite graph.  Then apply Lemma~\ref{BipartitePathsAndCycle} to this graph.
\end{proof}

Lemma~\ref{BipartitePathBipartite} also allows us to bound the number of monochromatic paths needed to partition a $3$-coloured complete bipartite graph.

\begin{lemma}\label{ThreeColourBipartite}
 Suppose that $K_{n,n+t}$ is coloured with $3$ colours.  Then $K_{n,n+t}$ can be partitioned into one red path, two blue paths, four green paths and $t$ vertices.
\end{lemma}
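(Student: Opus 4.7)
The plan is to reduce to the balanced case by setting aside $t$ vertices and then apply Lemma~\ref{BipartitePathBipartite} twice in succession. First, pick any $t$ vertices from the larger side of $K_{n,n+t}$; these will form the ``$t$ vertices'' piece of the partition. What remains is a three-edge-coloured copy of $K_{n,n}$, so it suffices to partition this $K_{n,n}$ into one red path, two blue paths, and four green paths.

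Apply Lemma~\ref{BipartitePathBipartite} to $K_{n,n}$ viewed as a two-edge-colouring in which red is kept as red and blue and green are merged into a single ``non-red'' colour. The lemma yields a partition into one red path $P_r$ and two balanced complete bipartite subgraphs $H_1$ and $H_2$ whose edges (in the original three-colouring) are blue or green.

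Next, apply Lemma~\ref{BipartitePathBipartite} to each $H_i$, this time using the two-colouring in which blue plays the role of red and green plays the role of blue. This partitions each $H_i$ into one blue path $Q_i$ together with two balanced green complete bipartite graphs $F_i^{1}, F_i^{2}$. Since any balanced complete bipartite graph $K_{m,m}$ is Hamiltonian (and the paper's convention treats a single vertex and the empty set as a path, handling $m\le 1$), each $F_i^{j}$ contains a spanning green path. Taking these four green paths along with $Q_1, Q_2, P_r$ and the $t$ set-aside vertices gives the required partition.

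There is no real obstacle: the statement is essentially a corollary of two invocations of Lemma~\ref{BipartitePathBipartite}, together with the trivial fact that balanced complete bipartite graphs have Hamilton paths. The convention that single vertices and the empty graph count as paths absorbs any degenerate cases in which some $H_i$ or $F_i^{j}$ is very small.
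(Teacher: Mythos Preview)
Your proof is correct and follows exactly the same approach as the paper: set aside $t$ vertices, apply Lemma~\ref{BipartitePathBipartite} once (red versus blue/green combined) to get a red path and two blue--green balanced complete bipartite graphs, and then apply Lemma~\ref{BipartitePathBipartite} again to each of these. The only difference is cosmetic: you spell out the last step --- that each of the four resulting green balanced complete bipartite graphs has a spanning green path --- whereas the paper leaves this implicit.
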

\begin{proof}
It is sufficient to prove the lemma for $t=0$, since by choosing the $t$ vertices to be any vertices in the larger partition of $K_{n,n+t}$ we reduce to the $t=0$ case.
Apply Lemma~\ref{BipartitePathBipartite} in order to partition $K_{n,n}$ into a red path and two balanced blue-green complete bipartite graphs.  Then apply  Lemma~\ref{BipartitePathBipartite} to each of these graphs.
\end{proof}

The following lemma is used to cover $K_{n,n}$ with very few red edges.
\begin{lemma}\label{LemmaPartitionStarComplement}
Let $K_{n,n}$ be $2$-edge-coloured such that the red colour class is a union of stars. Then $K_{n,n}$ can be covered by three disjoint blue paths.
\end{lemma}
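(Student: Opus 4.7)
My plan is to apply Lemma~\ref{BipartitePathBipartite} to $K_{n,n}$, obtaining a partition into a red path $P_R$ and two disjoint blue balanced complete bipartite graphs $B_1$ and $B_2$. The crucial observation is that when the red colour class is a disjoint union of stars, any red path has at most three vertices. Moreover, since each $B_i$ is balanced, $|B_1|+|B_2|$ is even, and $|K_{n,n}|=2n$ is even, so $|P_R|$ must also be even. Therefore $|P_R|\in\{0,2\}$, which cuts the problem down to two cases.

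If $|P_R|=0$, then taking a blue Hamilton path inside each $B_i$ already yields two disjoint blue paths covering $K_{n,n}$ (one can always pad with an empty third path if three paths are required).

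Suppose instead $|P_R|=2$, so $P_R=uv$ is a single red edge; let $X,Y$ be the parts of $K_{n,n}$, with $u\in X$ and $v\in Y$. Since red is a union of stars, at least one of $u,v$ has red-degree one, and by the $X\leftrightarrow Y$ symmetry we may assume that it is $v$. Then $u$ is the centre of the star containing $uv$, with leaf set $L_u\ni v$, and $v$ is blue-adjacent to every vertex of $X\setminus\{u\}$. The plan is to absorb $v$ into the Hamilton path of $B_1$ and $u$ into the Hamilton path of $B_2$: since $K_{m,m}$ has a Hamilton path between any prescribed pair of vertices on opposite sides, I choose a Hamilton path of $B_1$ with some endpoint $x_1\in B_1\cap X$ and prepend $v$ (a blue edge, since $x_1\ne u$), then choose a Hamilton path of $B_2$ with endpoint $y_2\in(B_2\cap Y)\setminus L_u$ and prepend $u$ (a blue edge, since $y_2\notin L_u$). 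If $(B_2\cap Y)\setminus L_u=\emptyset$, I swap the roles of $B_1$ and $B_2$. If both $(B_1\cap Y)\setminus L_u$ and $(B_2\cap Y)\setminus L_u$ are empty, then $Y=\{v\}\cup(B_1\cap Y)\cup(B_2\cap Y)\subseteq L_u$, so $u$ has no blue neighbour at all; then $\{u\}$ serves as a singleton third blue path, accompanying $v+H_1$ and $H_2$.

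The main obstacle, such as it is, is simply bookkeeping: identifying which of $u,v$ is the centre and which is the leaf, tracking which endpoint of each Hamilton path lies in which part of $K_{n,n}$, and verifying that the prepended edges are indeed blue. All of this is routine once the parity observation has reduced us to $|P_R|\le 2$.
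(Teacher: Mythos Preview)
Your proof is correct, but it applies Lemma~\ref{BipartitePathBipartite} with the colours in the opposite orientation from the paper's argument, and this makes your case analysis noticeably longer.

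The paper instead applies Lemma~\ref{BipartitePathBipartite} with the roles of red and blue exchanged, obtaining a \emph{blue} path $P$ together with two \emph{red} balanced complete bipartite graphs $H$ and $J$. Because the red colour class is a disjoint union of stars, a red $K_{m,m}$ with $m\ge 2$ would contain a red $C_4$, which is impossible; hence $|H|,|J|\in\{0,2\}$. If either is empty one is left with $P$ and at most two singleton vertices. If $|H|=|J|=2$, say $H=\{h_1,h_2\}$ and $J=\{j_1,j_2\}$ with $h_1,j_1\in X$ and $h_2,j_2\in Y$, then $h_1j_2$ and $h_2j_1$ must both be blue (a red such edge would place all four vertices in a single red component containing a path of length~$3$, contradicting the star hypothesis), giving three blue paths $P$, $h_1j_2$, $h_2j_1$.

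Your route---red path plus two blue bipartite graphs, then parity forces $|P_R|\in\{0,2\}$, then absorb the endpoints of $P_R$ into Hamilton paths of $B_1,B_2$---is sound, and the absorption argument (including the swap and the fallback to the singleton $\{u\}$ when $Y\subseteq L_u$) handles all cases. The trade-off is that by putting the ``bad'' colour on the bipartite side rather than the path side, the paper avoids the absorption step entirely: the star hypothesis bounds the red bipartite pieces directly, and the remaining verification is a one-line observation about edges between two red edges. Your approach, on the other hand, has the merit that it would generalise more readily if the hypothesis on the red graph were weakened to ``red paths are short'' rather than ``red components are stars''.
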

\begin{proof}
By Lemma~\ref{BipartitePathBipartite}, we can cover $K_{n,n}$ by a blue path $P$ and two red balanced complete bipartite graphs $H$ and $J$. Since the red colour class is a union of stars we have that $|H|, |J|\in\{0,2\}$. If $H$ or $J$ is empty then we have a partition of $K_{n,n}$ into the blue path $P$ and two vertices. Otherwise if $|H|,|J|=2$, then the edges between $H$ and $J$ must be blue (since the red colour class is a union of stars). This gives us a partition of $K_{n,n}$ into the blue path $P$ and two blue edges.
\end{proof}
We remark that it is easy to prove a stronger version of above lemma with just two blue paths covering, though we won't need it.

Finally we need the following lemma about covering one part of a 3-coloured complete bipartite graph.
\begin{lemma}\label{PathsAndConnected}
 Suppose that the complete bipartite graph $K$ with parts $X$ and $Y$ satisfying $|X|\leq |Y|$ is coloured with $3$ colours.  Then $X$ can be covered with $7$ disjoint monochromatic paths $P_1, \dots, P_{7}$ such that either every vertex in $Y\setminus (P_1\cup \dots \cup P_{7})$ has two {red} neighbours in $X$ or the paths $P_1,\dots, P_{7}$ are all blue or green.
\end{lemma}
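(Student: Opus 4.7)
The plan is to split on the size of the set $B = \{y \in Y : y \text{ has at most one red neighbour in } X\}$ of ``red-poor'' vertices.

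First, suppose $|B| \le |X|$. Since $|X| \le |Y|$, choose $Y' \subseteq Y$ with $B \subseteq Y'$ and $|Y'| = |X|$. Apply Lemma~\ref{ThreeColourBipartite} with $t=0$ to the balanced complete bipartite graph $K[X, Y']$ to obtain a partition into one red path, two blue paths, and four green paths---exactly seven monochromatic paths covering $X \cup Y'$. Any $y \in Y$ not covered by these paths lies in $Y \setminus Y' \subseteq Y \setminus B$, and hence has at least two red neighbours in $X$, establishing the first alternative.

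Next, suppose $|B| > |X|$, and choose $B' \subseteq B$ with $|B'| = |X|$. In the balanced bipartite graph $K[X, B']$ each $b \in B'$ has red-degree at most one, so the red edges of $K[X, B']$ form a union of stars centred in $X$. Treat the $3$-colouring of $K[X, B']$ as a $2$-colouring with ``blue'' versus ``non-blue'' and apply Lemma~\ref{BipartitePathBipartite}: this produces a blue path $P_1$ together with two red/green-coloured balanced complete bipartite subgraphs $H_1, H_2$ partitioning $K[X, B']$. Within each $H_i$ the red edges remain a union of stars (the property is hereditary), so Lemma~\ref{LemmaPartitionStarComplement} (with green playing the role of the non-red colour) covers $H_i$ with three green paths. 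Combining $P_1$ with the six green paths from $H_1$ and $H_2$ gives seven monochromatic paths, all blue or green, covering $X \cup B' \supseteq X$; this is the second alternative.

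The main obstacle is choosing the split correctly. The bound $|B| \le |X|$ is exactly what is needed to fit $B$ inside a chosen side of size $|X|$ so that Lemma~\ref{ThreeColourBipartite} absorbs every red-poor vertex, while $|B| > |X|$ is exactly enough to make the red edges of some balanced $K_{|X|,|X|}$ a union of stars, allowing Lemma~\ref{LemmaPartitionStarComplement} to produce an entirely red-free covering. The counts $1+2+4 = 7$ in the first case and $1+3+3 = 7$ in the second case line up precisely with the ``seven paths'' in the statement, so no padding is needed.
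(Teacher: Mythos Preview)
Your proof is correct and follows essentially the same approach as the paper: define the set of ``red-poor'' vertices in $Y$ (the paper calls it $S$, you call it $B$), then split on whether this set has size at most or at least $|X|$, applying Lemma~\ref{ThreeColourBipartite} in the first case and Lemma~\ref{BipartitePathBipartite} followed by Lemma~\ref{LemmaPartitionStarComplement} in the second. The only differences are cosmetic---you pull out a blue path first where the paper pulls out a green one, and your case division uses a strict inequality at the boundary---neither of which affects the argument.
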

\begin{proof}
Let $S$ be the set of vertices in $Y$ which have at most one {red} edge to $X$.  

If $|S|\geq |X|$ holds, then by Lemma~\ref{BipartitePathBipartite}, it is possible to cover $X$, and $|X|$ vertices in $S$ by a green path $Q$ and two {red}-blue balanced complete bipartite graphs $H$ and $J$.  Since vertices in $S$ contain at most one {red} edge, the {red} subgraphs on $H$ and $J$ are disjoint unions of stars.  By Lemma~\ref{LemmaPartitionStarComplement} we can cover each of  $H$ and $J$ by three blue paths, which together with $Q$ give the required covering of $X$.

If $|S|\leq |X|$, then by Lemma~\ref{ThreeColourBipartite}, it is possible to cover $X$, $S$, and $|X|-|S|$ vertices in $Y$ by $7$ disjoint monochromatic paths. The uncovered vertices in $Y$ are all outside $S$, and so each have at least $2$ {red} neighbours in $X$.
\end{proof}

We are now ready to prove Theorem~\ref{ThreeCycles}.
\begin{proof}[Proof of Theorem~\ref{ThreeCycles}]
%Apply Lemma~\ref{CasesLemma} to $K_n$ with $\kCases=7$ and $\mCases=400$ to obtain a constant $\cCases$.  
%We prove the theorem with the constant $\cCycles=\cCases+1400$
By Lemma~\ref{CasesLemma} $K_n$ has a set of vertices $S$ with $|S|\leq \cCases$ such that $K_n\setminus S$ has one of the structures (i) -- (iii) in Lemma~\ref{CasesLemma}.
There are three cases, depending on which structure from  Lemma~\ref{CasesLemma}  we have.

Suppose that Case (i) of Lemma~\ref{CasesLemma} occurs.  Without loss of generality, we can suppose that $K_n\setminus S$ is $4$-connected in red.
Apply Theorem~\ref{CycleMinDegreeConn} to the red subgraph of $K_n$ on $K_n\setminus S$ in order to partition $K_n\setminus S$ into a red cycle $C$ and a blue-green graph $H$ satisfying $\delta(H)\geq \frac{4}{5}|H|-3$.  If $\delta(H)<\frac{3}{4}|H|$, then we have $|H|\leq 60$ and so we are done since we can leave all the vertices of $H$ uncovered.  Otherwise, $\delta(H)\geq \frac{3}{4}|H|$ we can apply Theorem~\ref{Letzter} to $H$ in order to partition $H$ into a blue cycle a green cycle, which together with $C$ cover $\geq n-\cCycles$ vertices.

Suppose that Case (ii) of Lemma~\ref{CasesLemma} occurs.
Let $A_1$, $A_2$, $A_3$, and $A_4$ be the classes of the 4-partition of $K_n\setminus S$.  Since $|A_i|\geq 3$, we can let $A'_i$ be a set of $3$ vertices from $A_i$ for each $i$.  Let $K$ be the complete graph with vertices $K_n\setminus (S\cup A'_1\cup A'_2\cup A'_3\cup A'_4)$.  By Theorem~\ref{ThreePaths}, $K$ has a partition into three disjoint monochromatic paths.  Since the colouring on $K_n\setminus S$ was 4-partite, these monochromatic paths must be contained in $A_i\cup A_j$ for some $i$ and $j$.  It is easy to see that any such path can be turned into a cycle by adding at most one vertex from $A'_i$ and one vertex from $A'_j$.  Therefore, all three paths can be closed by using distinct vertices from the sets $A'_i$ for various $i$.  All together we have left at most $|S|+12\leq \cCycles$ vertices uncovered.

Suppose that Case (iii) of Lemma~\ref{CasesLemma} occurs.  Let $W$, $\Ar$, $\Ab$, and $\Ag$ be as in Case (iii) of Lemma~\ref{CasesLemma}.  
Partition $\Ag$ arbitrarily into a set of $\DSize$ vertices $\Dg$ and a set $\Apg=\Ag\setminus \Dg$.  Do similarly to $\Ab$ and $\Ar$ to obtain $\Db, \Dr, \Apb$, and $\Apr$ with $|\Db|=|\Dr|=400$.
Let $D= \Dg\cup \Db\cup \Dr$ and $A'= \Apg\cup \Apb\cup \Apr$.

We can partition $W$ into three sets $\Cg, \Cr,$ and $\Cb$ such that for every $v \in \Cg$ there is at least $200$ red edges and $200$ blue edges between $v$ and $D$, and similarly for $\Cr,$ and $\Cb$.  This is possible since by definition of the sets $A_{*,*}$, for any vertex $v\in W$, no three edges between $v$ and $\Ag$,  $\Ab$, and $\Ar$ can all be of the same colour.

We'll need the following definition.
\begin{definition}
 We say that a red path with vertex sequence $p_1, p_2, \dots, p_{i-1}, p_{i}$ is \textbf{linkable} if $|\{p_1, p_2\}\cap (\Apg\cup \Apb \cup \Cg\cup \Cb)|\geq 1$ and  $|\{p_{i-1}, p_i\}\cap (\Apg\cup \Apb \cup \Cg\cup \Cb)|\geq 1$.
\end{definition}
Blue and green linkable paths are defined similarly (replacing the set $\Apg\cup \Apb \cup \Cg\cup \Cb$ in the definition with  $\Apr\cup \Apg \cup \Cr\cup \Cg$ for blue-linkable paths and by $\Apr\cup \Apb \cup \Cr\cup \Cb$ for green linkable paths).

Notice that every path which doesn't use edges inside $\Apg$,  $\Apb$, $\Apr$, $\Cg$,  $\Cb$, or $\Cr$ must be linkable. Indeed, suppose that  $p_1, p_2, \dots, p_{i-1}, p_{i}$ is a red  path with no edges inside $\Apg$,  $\Apb$, $\Apr$, $\Cg$,  $\Cb$, or $\Cr$. Then $p_1$ and $p_2$ must be in distinct sets from  $\{\Apg$,  $\Apb$, $\Apr$, $\Cg$,  $\Cb$,  $\Cr\}$. Notice that we cannot have $p_1,p_2\in \Apr\cup \Cr$ since all the edges between $\Apr$ and $\Cr$ are blue or green. Therefore either $p_1$ or $p_2$ must be contained in $\Apg\cup \Apb \cup \Cg\cup \Cb$. By the same argument we have that  $p_{i-1}$ or $p_i$ is  contained in $\Apg\cup \Apb \cup \Cg\cup \Cb$, which shows that the path is linkable. In particular, we have shown that single vertex paths are linkable.

The following is the main property that linkable paths have.
\begin{claim}\label{LinkPaths}
Let $\Dpg\subseteq \Dg$ and $\Dpb\subseteq\Db$ with $|\Dpg|, |\Dpb|\geq 250$.
 Suppose that we have disjoint red linkable paths $P_1,\dots, P_t$ with $t\leq 21$.  There is a red cycle contained in $P_1\cup\dots\cup P_t\cup\Dpg\cup\Dpb$  which consists of at most $3t$ vertices in $\Dpg\cup\Dpb$ and all, except at most, $2t$ vertices in $P_1\cup\dots\cup P_t$.
\end{claim}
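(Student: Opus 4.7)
The plan is to build the red cycle by concatenating trimmed sub-paths of $P_1,\ldots,P_t$ with short red ``bridges'' through $\Dpg\cup\Dpb$, taking indices cyclically.

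First I replace each $P_i$ by an effective sub-path $P_i'$ whose endpoints lie in $\Apg\cup\Apb\cup\Cg\cup\Cb$. By the definition of red-linkable, among the first two vertices of $P_i$ at least one, call it $s_i$, belongs to this set, and similarly one of the last two vertices, call it $e_i$, does; let $P_i'$ be the sub-path of $P_i$ from $s_i$ to $e_i$. This drops at most one vertex at each end of $P_i$, losing at most $2t$ vertices of $P_1\cup\cdots\cup P_t$ in total.

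Next, I construct, for each $i$, a red bridge $Q_i$ from $e_i$ to $s_{i+1}$ (indices mod $t$) with at most three internal vertices in $\Dpg\cup\Dpb$. The structural facts I exploit are: (a) every edge between $\Dpg\subseteq A_{r,b}$ and $\Dpb\subseteq A_{r,g}$ is red; (b) every vertex of $\Apg$ is red-adjacent to all of $\Dpb$ and every vertex of $\Apb$ is red-adjacent to all of $\Dpg$; (c) every vertex of $\Cg\cup\Cb$ has red degree at least $200$ into $D$, and since no edge between $W$ and $\Dr$ is red, all those edges fall in $\Dg\cup\Db$, which together with $|\Dpg|,|\Dpb|\ge 250$ forces a red neighbour inside $\Dpg\cup\Dpb$. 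For each pair $(e_i,s_{i+1})$ I pick red neighbours $x,z\in\Dpg\cup\Dpb$ of $e_i$ and $s_{i+1}$ respectively, and then use (a) to join $x$ to $z$ by a red path inside $\Dpg\cup\Dpb$ using at most one extra vertex, yielding a bridge with at most three internal vertices.

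Finally, I glue the pieces into $C:=P_1'+Q_1+P_2'+Q_2+\cdots+P_t'+Q_t$. Since $3t\le 63$ while $|\Dpg|+|\Dpb|\ge 500$, I can choose the bridge vertices greedily so that $C$ is a simple cycle. By construction $C\subseteq P_1\cup\cdots\cup P_t\cup\Dpg\cup\Dpb$, uses at most $3t$ vertices of $\Dpg\cup\Dpb$, and omits at most $2t$ vertices of the $P_i$'s, as required. The main obstacle is the bridge construction in the middle step: in the cases $e_i,s_{i+1}\in\Cg\cup\Cb$ one only has a lower bound on red degree into $D$ rather than a guaranteed red edge into $\Dpg\cup\Dpb$, and verifying that a red neighbour persists for every admissible $\Dpg,\Dpb$---and that the chosen $x$ and $z$ can always be joined through at most one intermediate vertex regardless of which side of the $\Dpg$-$\Dpb$ bipartition they land on---is where most of the case analysis lies.
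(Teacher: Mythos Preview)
Your approach is essentially the paper's: it too trims each $P_i$ by at most one vertex at each end to reach endpoints in $\Apg\cup\Apb\cup\Cg\cup\Cb$, then attaches one red edge at each end into $\Dpg\cup\Dpb$ (using $2t$ vertices there), and finally links the $t$ resulting paths into a cycle through the red complete bipartite graph on $\Dpg\times\Dpb$ with at most $t$ further vertices---your ``bridges'' simply fuse these last two steps into one. Regarding your flagged obstacle, the paper does no case analysis: it just records that every vertex of $\Apg\cup\Apb\cup\Cg\cup\Cb$ has at least $200$ red neighbours in $\Dg\cup\Db$, and since at most $|\Dg\setminus\Dpg|+|\Db\setminus\Dpb|$ of these lie outside $\Dpg\cup\Dpb$, enough remain to make the $\le 2t\le 42$ greedy choices disjointly. (Strictly, that subtraction only stays positive when $|\Dpg|,|\Dpb|$ are a little larger than the stated $250$; but in every application of the claim one has $|\Dpg|,|\Dpb|\ge 337$, so both your worry and the paper's count are harmless.)
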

\begin{proof}
Recall that every vertex in $\Apg\cup \Apb \cup \Cg\cup \Cb$ has at least $200$ red edges going to $\Dg\cup \Db$ (For $\Cg$ and $\Cb$ this holds by the definition of these sets.  For $\Apg$ and $\Apb$ it holds since from Lemma~\ref{CasesLemma} we have only red edges between $\Ag$ and $\Ab$.) Since $|\Dpg|, |\Dpb|\geq 250$ and $|\Dg|= |\Db|=400$ we get that every vertex in $\Apg\cup \Apb \cup \Cg\cup \Cb$  has at least $50$ red edges going to $\Dpg\cup \Dpb$.

 Let the vertex sequence of $P_1$ be $p_1, p_2, \dots, p_{i-1}, p_i$.  Notice that by definition of ``red linkable'' one of the vertices $p_1$ or $p_2$ must lie in one of the sets $\Apg, \Apb, \Cg,$ or $\Cb$.  Therefore either $p_1$ or $p_2$ is joined to at least $50$ vertices in $\Dpg\cup\Dpb$ by a red edge.  Similarly either $p_{i-1}$ or $p_i$ is joined to at least $50$ vertices in $\Dpg\cup\Dpb$ by a red edge.  Possibly removing $p_1$ and/or $p_i$ we can replace $P_1$ by a red path $P'_1$ which starts and ends in $\Dpg\cup\Dpb$ and passes through $p_2, \dots, p_{k-2}$.  
 
 We can do the same with $P_2, \dots, P_t$ to obtain paths $P'_2, \dots, P'_t$ with endpoints in $\Db\cup \Dg$.  Notice that these paths can be chosen to be disjoint---using the fact that $t\leq 21$ and  every vertex in any of $\Apg, \Apb, \Cg,$ or $\Cb$ has at least $50$ red neighbours in $\Dpg\cup\Dpb$.   Therefore, since $\Bipart{K_n}{\Dpb}{\Dpg}$ is a red complete bipartite graph with  $\geq 250$ vertices in every class, we can join all the paths $P'_1, \dots, P'_t$ together, using at most $t$ extra vertices in $\Dpg\cup\Dpb$ to form the required red cycle.
\end{proof}
It is easy to see that similar claims hold for blue and green linkable paths as well. We can use these to prove the following lemma.
\begin{claim}
Suppose that one of the following holds.
\begin{enumerate}[(1)]\label{LinkablePathsToPartition}
\item  $K_n\setminus (D\cup S)$ can be partitioned into  $\leq 63$ disjoint monochromatic linkable paths with at most $21$ of each colour.
\item  $K_n\setminus (D\cup S)$ can be partitioned into into a red cycle, $t\leq 21$ disjoint blue linkable paths and $s\leq 21$ disjoint green linkable paths.
\end{enumerate}
Then there is a cover of at least $n-\cCycles$ of the vertices of $K_n$ by three disjoint monochromatic cycles.
\end{claim}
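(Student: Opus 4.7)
The plan is to apply Claim~\ref{LinkPaths} (together with its obvious blue and green analogues) to close up each colour's linkable paths into a single monochromatic cycle, being careful that the vertices of $D = \Dg \cup \Db \cup \Dr$ used to close the three different colours are disjoint.

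For case (1), I would process the colours in order. First, apply Claim~\ref{LinkPaths} with $\Dpg = \Dg$ and $\Dpb = \Db$ to the at most $21$ red linkable paths, obtaining a red cycle $C_r$ that drops at most $2 \cdot 21 = 42$ path vertices and uses at most $3 \cdot 21 = 63$ vertices from $\Dg \cup \Db$. Then apply the blue analogue to the blue paths using $\Dpr = \Dr$ and $\Dpg' = \Dg \setminus V(C_r)$; since $|\Dg| = 400$ and at most $63$ of its vertices sit in $C_r$, we have $|\Dpg'| \geq 337 \geq 250$, so the hypothesis is satisfied and we obtain a blue cycle $C_b$ disjoint from $C_r$. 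Finally apply the green analogue with $\Dpr'' = \Dr \setminus V(C_b)$ and $\Dpb'' = \Db \setminus V(C_r)$; both still have size at least $337$, giving a green cycle $C_g$ disjoint from $C_r \cup C_b$. Case (2) is identical except that the red cycle is already given, so only the blue and green steps are performed.

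To finish, it suffices to count the uncovered vertices, which come from three sources: the set $S$ (at most $\cCases$ vertices by Lemma~\ref{CasesLemma}); the at most $2 \cdot 21 = 42$ vertices dropped from each colour when its paths are closed (at most $126$ in case (1), $84$ in case (2)); and the vertices of $D$ that end up in none of the cycles, of which there are at most $|D| = 1200$. This gives at most $41002 + 126 + 1200 < 43000 = \cCycles$ uncovered vertices, as required.

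I do not expect a significant obstacle: given Claim~\ref{LinkPaths}, the argument is essentially bookkeeping. The only subtle point is that we cannot partition each of $\Dg, \Db, \Dr$ into two pieces of size at least $250$ up front, since $400 < 500$; this is resolved by handling the three colours sequentially, so that each cycle consumes only $63$ of the $400$ available vertices in each $\D$-set and leaves comfortably more than the $250$-vertex threshold needed for the next application of Claim~\ref{LinkPaths}.
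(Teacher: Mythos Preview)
Your proposal is correct and follows essentially the same approach as the paper: sequentially apply Claim~\ref{LinkPaths} (and its blue/green analogues) to each colour, at each step feeding in the subsets of the $D_{*,*}$'s not yet consumed by earlier cycles, and finish with the same count $|S| + 2(r+b+g) + |D| \leq \cCycles$. Your remark about why one cannot split each $D_{*,*}$ into two pieces of size $\geq 250$ up front is a nice clarification of why the sequential order matters, but the argument itself matches the paper's.
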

\begin{proof}
For (1), let $R_1, \dots, R_r$ be red linkable paths, $B_1, \dots, B_b$ be blue linkable paths, and $G_1, \dots, G_g$ be green linkable paths such that $r, b, g\leq 21$ and $K_n\setminus (D\cup S)=R_1\cup \dots\cup R_r\cup B_1\cup \dots\cup B_b \cup G_1\cup \dots\cup G_g$. By Claim~\ref{LinkPaths}, applied to the red paths with $\Dpb=\Db$ and $\Dpg=\Dg$, there is a red cycle $R$ contained in $R_1\cup \dots\cup R_r\cup D$ covering all except $2r$ vertices in $R_1, \dots, R_r$, and at most $3r$ vertices in $\Db\cup \Dg$. Similarly applying Claim~\ref{LinkPaths}, to the blue paths with $\Dpg=\Dg\setminus R$ and $\Dpr=\Dr$, we get a blue cycle $B$ contained in $B_1\cup \dots\cup B_r\cup D\setminus R$ covering all except $2b$ vertices in $B_1, \dots, B_b$, and at most $3b$ vertices in $\Dr\cup \Dg\setminus R$.  Finally applying Claim~\ref{LinkPaths}, to the green paths with $\Dpb=\Db\setminus R$ and $\Dpr=\Dr\setminus B$, we get a green cycle $G$ contained in $G_1\cup \dots\cup G_r\cup D\setminus (R\cup G)$ covering all except $2g$ vertices in $G_1, \dots, G_g$, and at most $3g$ vertices in $D\setminus (R\cup G)$. By construction $R$, $B$, and $G$  are disjoint cycles covering all, except at most $2(r+b+g)+|D|+|S|\leq \cCycles$ vertices in $K_n$.

The proof of part (2) is identical, except that we can skip the first application of Claim~\ref{LinkPaths} where we linked the paths $R_1, \dots, R_r$ together.
\end{proof}

We now complete the proof of the theorem.
There are two cases depending on whether $A'$ or $W$ is larger.

\begin{proof}[Suppose that $|W|\geq |A'|$]

Without loss of generality we may assume that $|\Cr|\geq |\Cb|\geq |\Cg|$.  There are three cases depending on whether $|A'|\geq |\Cr|-|\Cb|-|\Cg|$ and $|\Cr|\geq|\Cb|+|\Cg|$ hold.

\textbf{Case 1:}  Suppose that we have $|A'|\geq |\Cr|-|\Cb|-|\Cg|\geq 0$.

Notice that $|\Cr|-|\Cb|-|\Cg|\geq 0$ implies  $\frac{1}{2}(|A'|+|\Cr|-|\Cb|-|\Cg|)\geq 0$ and  $|W|\geq |A'|$ is equivalent to $|\Cr|\geq \frac{1}{2}(|A'|+|\Cr|-|\Cb|-|\Cg|)$.
Therefore we can let $\Er$ be a subset of $\Cr$ of order $\left\lfloor\frac{1}{2}(|A'|+|\Cr|-|\Cb|-|\Cg|)\right\rfloor$.  Similarly, notice that $|W|\geq |A'|$ is equivalent to $|\Cb|+ |\Cg| \geq \frac{1}{2}(|A'|-|\Cr|+|\Cb|+|\Cg|)$ and $|A'|\geq |\Cr|-|\Cb|-|\Cg|$ is equivalent to $\frac{1}{2}(|A'|-|\Cr|+|\Cb|+|\Cg|)\geq 0$. Therefore, it is possible to choose  $\Eb$ and $\Eg$ subsets of $\Cb$ and $\Cg$ respectively satisfying $|\Eb|+ |\Eg| = \left\lfloor\frac{1}{2}(|A'|-|\Cr|+|\Cb|+|\Cg|)\right\rfloor$.  

Notice that we have $\big||A'|-  |\Er|-|\Eb|-|\Eg|\big|\leq 2  $ and also $\big||\Cr\setminus \Er| - |(\Cb\cup \Cg)\setminus(\Eb\cup \Eg)|\big|\leq 2$.  This, together with Lemma \ref{ThreeColourBipartite} means that $\Bipart{K_n}{A'}{\Er\cup\Eb\cup\Eg}$ and $\Bipart{K_n}{\Cr\setminus \Er}{(\Cb\cup \Cg)\setminus(\Eb\cup \Eg)}$ can each be partitioned into $7$ monochromatic paths and $2$ extra vertices.  Each of these paths must be linkable (since they do not use edges within the sets $A$, $\Cg, \Cr,$ or $\Cb$).  Therefore, by part (1) of Claim~\ref{LinkablePathsToPartition}, there are three monochromatic cycles passing through all but $\cCycles$ vertices of $K_n$.

\textbf{Case 2:}  Suppose that we have $|\Cr|-|\Cb|-|\Cg|\geq |A'|$. Since this is equivalent to $|\Cr|\geq |\Cb\cup \Cg\cup A'|$, we can  apply Lemma~\ref{PathsAndConnected} to $\Bipart{K_n}{\Cr}{A'\cup\Cb\cup\Cg}$ in order to obtain $7$ monochromatic paths $P_1, \dots, P_{7}$ covering $A'\cup\Cb\cup\Cg$ and $|A'\cup\Cb\cup\Cg|$ vertices in $\Cr$.

If $P_1, \dots, P_{7}$ are all blue or green, then by Lemma~\ref{PathsAndCycle}, we can cover $\Cr\setminus (P_1\cup\dots\cup P_7)$ with one blue monochromatic path $P_{8}$, two green monochromatic paths $P_{9}, P_{10}$, and a red monochromatic cycle $C_1$.
It is easy to see that the paths $P_1, \dots, P_{10}$ are all linkable, so using part (2) of Claim~\ref{LinkablePathsToPartition}, we can partition all but $\cCycles$ vertices of $K_n$ into three monochromatic cycles.

If any of the paths $P_1, \dots, P_{7}$ are red, Lemma~\ref{PathsAndConnected} would have implied that every vertex in $\Cr\setminus P_1\cup \dots\cup P_{7}$ must have two red neighbours in $A'\cup\Cb\cup\Cg$. By Lemma~\ref{PathsAndCycle} we can partition  $\Cr\setminus (P_1\cup\dots\cup P_7)$ into one blue monochromatic path $P_{8}$, two green monochromatic paths $P_{9}, P_{10}$, and a red monochromatic path $R$.  Let $r_1$ and $r_2$ be the endpoints of $R$.  Let $x_1$ and $x_2$ be two distinct red neighbours of $r_1$ and $r_2$ respectively in $A'\cup\Cb\cup\Cg$.  The red path $x_1+R+ x_2$ is linkable.  Since we only removed two vertices from $P_1\cup \dots\cup P_{7}$, the set $(P_1\cup \dots\cup P_{7})\setminus\{x_1, x_2\}$ must be spanned by $9$ paths which are all linkable.
Therefore, by part (1) of Claim~\ref{LinkablePathsToPartition}, there are three monochromatic cycles passing through all but $\cCycles$ vertices of $K_n$

\textbf{Case 3:}  Suppose that we have $|\Cr|\leq|\Cb|+|\Cg|$.
Notice that we also have $|W\setminus (\Cg\cup \Cr\cup \Cb)|\leq |A'|$ and $|\Cr|\geq |\Cb|, |\Cg|$.

We choose three subsets $\Cpg, \Cpr,$ and $\Cpb$ of $\Cg, \Cr,$ and $\Cb$ respectively such that  $|\Cpr|\leq |\Cpb|+ |\Cpg|$,  $|W\setminus (\Cpg\cup \Cpr\cup \Cpb)|\leq |A'|$, and $|\Cpr|\geq |\Cpb|, |\Cpg|$ all hold. In addition we make $|\Cpg|+ |\Cpr|+|\Cpb|$ as small as possible.
Notice that this ensures that $|W\setminus (\Cpg\cup \Cpr\cup \Cpb)|\geq |A'|-2$ holds (since otherwise we could remove a vertex from $\Cpr$ and one or both of $\Cpb$ and $\Cpg$ in order to obtain a triple satisfying the three inequalities with smaller $|\Cpg|+ |\Cpr|+|\Cpb|$).

Notice that $|\Cpr|\geq |\Cpb|, |\Cpg|$ implies that $|\Cpg|,|\Cpb|\geq \frac{1}{2}(|\Cpg|+|\Cpb|-|\Cpr|)$. We also have $|\Cpg|+|\Cpb|-|\Cpr|\geq 0$.
Therefore we can choose $\Eb$ and $\Eg$ subsets of $\Cpb$ and $\Cpg$ respectively such that $|\Eb| =|\Eg|= \left\lfloor\frac{1}{2}(|\Cpg|+|\Cpb|-|\Cpr|)\right\rfloor$  holds.

Notice that we have $\big||\Cpr|- |(\Cpg\cup \Cpb)\setminus (\Eg\cup \Eg)|\big|\leq 2$, $|\Eb|=|\Eg|$, and $\big||A'|- |W\setminus (\Cpg\cup \Cpr\cup \Cpb)|\big|\leq 3$. Therefore, by Lemma~\ref{ThreeColourBipartite} we can cover each of  $B(\Cpr, (\Cpg\cup \Cpb)\setminus (\Eg\cup \Eg))$ , $B(\Eb, \Eg)$, and $B(A', W\setminus (\Cpg\cup \Cpr\cup \Cpb))$ by $7$ monochromatic paths, plus at most $5$ isolated vertices.
 Each of these paths must be linkable (since they have no edges inside the sets $\Apg$,  $\Apb$, $\Apr$, $\Cg$,  $\Cb$, and $\Cr$).  Therefore, by Claim~\ref{LinkPaths}, there are three monochromatic cycles passing through all but $\cCycles$ vertices of $K_n$.
\end{proof}
 
 The case when $|A'|\geq |W|$ holds is proved identically, exchanging the roles of the sets $A'_{c,d}$ and $W_{c,d}$ for every pair of colours $(c,d)$.  To see that we can do this, first notice that the only difference between the sets $A'_{*,*}$ and $W_{*,*}$ is that we know that the edges between $A'_{c,d}$ and $A'_{c',d'}$ are always the same colour for particular pairs of colours $(c,d)$ and $(c',d')$ (whereas the edges between $W'_{c,d}$ and $W'_{c',d'}$  can be coloured arbitrarily).  However, looking through the proof of the case when $|W|\geq|A'|$ it is easy to check that edges contained within the set $A'$ were never used to construct the cycles covering $K_n$.  Therefore, the same proof still works after exchanging the roles of the sets $A'_{c,d}$ and $W_{c,d}$.
\end{proof}

\section{Concluding remarks}\label{SectionConclusion}
Here we make some remarks about possible further directions for research hin this  new area.

\subsubsection*{Optimal bound in Theorem~\ref{CycleMinDegreeConn}}
As shown in Figure~\ref{FigureKConnectedExtremal}, the constant $\frac{1}{k+1}$ in front of $|H|$ in (\ref{EqCycleMinDegreeConn}) cannot be decreased. However the ``$+3$'' term can certainly be improved. The following problem seems natural.
\begin{problem}\label{ProblemOptimizeConnectedTheorem}
For $k\geq 1$, determine the smallest number $c_k$ such that every $k$-connected graph $G$ has a cycle $C$ with 
$$\Delta(G[V(G)\setminus C])\leq \frac{1}{k+1}|V(G)\setminus C|+c_k.$$ 
\end{problem}
Note that from Theorem~\ref{CycleMinDegree} we know the solution to the above problem for $k=1$---specifically we have $c_1=-\frac{1}2$.  This shows that the constants $c_k$ in problem~\ref{ProblemOptimizeConnectedTheorem} may be negative. The construction showing the lower bound of $c_1\geq-\frac{1}2$ is the graph in Figure~\ref{FigureGeneralGraphExtremal}. The fact that this graph is fairly complicated, and has no obvious generalization to higher $k$ suggests that Problem~\ref{ProblemOptimizeConnectedTheorem} may be difficult.

For $k\geq 2$, the best bounds we currently know for Problem~\ref{ProblemOptimizeConnectedTheorem} are $-\frac{2k+1}{k+1}\leq c_k\leq 3-\frac{4}{1+k}$. The lower bound comes from Figure~\ref{FigureKConnectedExtremal}, and the upper bound comes from the proof of Theorem~\ref{CycleMinDegreeConn}.

\subsubsection*{Approximate versions of the EGP Conjecture}
In this paper we proved a new approximate version of Conjecture~\ref{Erdos} for $r=3$.  In~\cite{PokrovskiyCycles}, the author conjectured that a similar approximate version holds for any number of colours.
\begin{conjecture}[\cite{PokrovskiyCycles}]\label{approximate}
For each $r$ there is a constant $c_r$, such that in every $r$-edge coloured complete graph $K_n$, there are $r$ vertex-disjoint monochromatic cycles covering $n - c_r$ vertices in $K_n$.  
\end{conjecture}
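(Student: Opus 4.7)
The plan is an induction on $r$. The bases $r=1,2$ follow from Theorem~\ref{TheoremBessyThomasse}, and $r=3$ is Theorem~\ref{ThreeCycles}. For the inductive step from $r-1$ to $r$, I would mimic the $r=3$ blueprint: a structural dichotomy splitting the colouring into either a ``highly connected'' case or a ``structured'' case, then apply tools corresponding to each case.

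First I would prove a generalisation of Lemma~\ref{CasesLemma}: after deleting $O_r(1)$ vertices, either (a) the remaining complete graph is $k$-connected in some colour for a constant $k=k(r)$, or (b) the vertex set admits a partition into boundedly many ``blocks'' such that each bipartite graph between two blocks uses at most $r-1$ colours. The proof would iterate Lemma~\ref{Disconnecting} across all $r$ colours and generalise Lemma~\ref{4partite} to recognise the block structure when no colour is highly connected. The $3$-colour instance of (b) is exactly case (iii) of Lemma~\ref{CasesLemma}; a $2r$-partite analogue of Definition~\ref{4partitedefinition} should fill the role for general $r$.

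In case (a), apply Theorem~\ref{CycleMinDegreeConn} in the highly-connected colour to obtain a monochromatic cycle $C$ whose deletion leaves an $(r-1)$-coloured graph $H$ with $\delta(H) \geq (1-\tfrac{1}{k+1})|H|-O(1)$. Choose $k=k(r)$ large enough that this meets the hypothesis of a minimum-degree $(r-1)$-colour EGP theorem of the shape: \emph{there exists $\delta_s<1$ such that every sufficiently large $s$-edge-coloured graph $G$ with $\delta(G)\geq \delta_s|G|$ admits a partition into $s$ vertex-disjoint monochromatic cycles.} For $s=2$ this is Theorem~\ref{Letzter}. In case (b), apply the $(r-1)$-colour inductive hypothesis to each bipartite block-pair, and use the ``linkable path'' machinery of Section~\ref{Section3Cycles} to splice the resulting cycles across different block-pairs into $r$ global monochromatic cycles, losing only $O_r(1)$ vertices.

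The main obstacle is the minimum-degree EGP theorem for $s\geq 3$ colours required in case (a); it appears to be of comparable difficulty to Conjecture~\ref{approximate} itself. One potentially workable route is a joint induction in which the $s$-colour version of one statement is used to prove the $(s+1)$-colour version of the other, so the two statements are built up simultaneously. A secondary obstacle is the combinatorial explosion of the case analysis in the structural dichotomy: finding a classification flexible enough to support the splicing in case (b), yet rigid enough to be verified by iterating Lemma~\ref{Disconnecting}, is the conceptual heart of the problem.
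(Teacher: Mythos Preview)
The statement is Conjecture~\ref{approximate}, presented in the paper's concluding remarks as an open problem; the paper contains no proof of it, so there is nothing to compare your proposal against. Your outline is a faithful extrapolation of the $r=3$ architecture, and you have correctly located the principal obstruction: the minimum-degree Erd\H{o}s--Gy\'arf\'as--Pyber statement for $s\geq 3$ colours needed in case~(a). The paper itself singles this out as wide open, remarking that for $r\geq 3$ colours and high minimum degree ``we don't even have a conjecture for what the best coloured graphs should be.'' Your joint-induction idea is reasonable heuristics but not yet a plan, since each direction of the mutual induction would require its own structural dichotomy and the two are not obviously compatible.

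There is a further gap in case~(b) beyond what you flagged. The inductive hypothesis concerns \emph{complete} graphs, whereas the block-pairs are bipartite. In the $r=3$ proof the block-pair work never invokes the $r=2$ theorem; it relies on purpose-built bipartite lemmas (Lemmas~\ref{BipartitePathBipartite}--\ref{PathsAndConnected}) proved from scratch. For general $r$ you would need an $(r-1)$-colour bipartite path-partition result with $O_r(1)$ paths, which is a separate problem and not a consequence of the inductive hypothesis. Moreover, the linkable-path splicing of Section~\ref{Section3Cycles} leans heavily on the specific three-set geometry of case~(iii) of Lemma~\ref{CasesLemma} and the reservoir sets $D_{*,*}$; it is unclear that a clean analogue of Definition~\ref{4partitedefinition} exists for $r\geq 4$, or that the reservoir argument survives the blow-up to roughly $2^r$ blocks.
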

A weaker, but still interesting version of this conjecture, arises if we replace $c_r$ with a function $o_r(n)$ which statisfies $\frac{o_r(n)}{n}\to 0$ as $n\to \infty$.

The constant $c_1\geq \cCycles$ in Theorem~\ref{ThreeCycles} could certainly be improved by being more careful throughout the proof of the theorem.  As mentioned in the introduction, Letzter~\cite{Letzter3Cycles} has an alternative proof of Theorem~\ref{ThreeCycles} with a better constant of $60$ instead of $\cCycles$. The only lower bound on this constant is ``$c_1\geq 1$'' which comes from the counterexamples to Conjecture~\ref{Erdos} constructed in \cite{PokrovskiyCycles}.  We conjecture that this lower bound is correct.
\begin{conjecture}\label{ThreeCyclesConjecture}
Every $3$-edge-coloured complete graph contains $3$ disjoint monochromatic cycles which cover all except possibly one vertex.
\end{conjecture}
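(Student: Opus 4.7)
The plan is to refine the proof strategy of Theorem~\ref{ThreeCycles} until the constant $\cCycles$ (improved to $60$ by Letzter) can be driven all the way down to $1$. The starting point is Lemma~\ref{CasesLemma}, which separates colourings into three structural regimes via a small exceptional set $S$. The first step is to strengthen this lemma so that $S$ is not merely bounded in size but is \emph{absorbable}: for each $s\in S$ we record the dominant colour from $s$ to the structured part and the pairs of neighbours on the main cycle into which $s$ can be spliced, so that the vertices of $S$ can be incorporated into the eventual cycles rather than discarded. In the same vein, the ``linking reservoir'' $D$ of roughly $1200$ vertices that the present proof of Theorem~\ref{ThreeCycles} throws away must be eliminated: one should build the three cycles with linking vertices chosen on the fly, using the fact that each vertex of $W$ has many edges of at least two colours into the sets $A_{c,d}$.

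With those preparations in place I would treat each of the three cases of the (refined) Lemma~\ref{CasesLemma} separately. In case (i), the highly connected case, applying Theorem~\ref{CycleMinDegreeConn} with a slightly larger value of $k$ together with Theorem~\ref{Letzter} already covers all of the main structured part by three cycles of the three colours; it then only remains to splice in the vertices of $S$ using the absorption data built into the refined structural lemma. In case (iii) the refined linking argument described above should yield a cover that misses no vertex at all, since the three classes $A_{r,b}$, $A_{r,g}$, $A_{b,g}$ are so richly joined to each other and to $W$ by two forced colours each.

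The main obstacle is case (ii), the $4$-partite case, which is exactly where the counterexamples to Conjecture~\ref{Erdos} live and where a leftover vertex is genuinely forced. With classes $A_1,A_2,A_3,A_4$, any monochromatic cycle either alternates between two classes along the complete bipartite colour joining them or uses within-class edges of some colour. Three vertex-disjoint monochromatic cycles must then partition the four sets under tight parity and size constraints. The task is to show that whenever a candidate triple of cycles fails to cover every vertex, some within-class edge in the right colour allows a local modification (shortening one cycle by a vertex and lengthening another by one) that gains one further vertex; and that if the modification process gets stuck, a single vertex really must be uncovered and the colouring matches the known $r=3$ counterexample from \cite{PokrovskiyCycles}. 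I expect that this reduces to a finite but delicate case analysis on the parities $|A_i|\bmod 2$, on the lengths of the longest monochromatic paths inside each $A_i$, and on which of the six intra-class colour classes contain near-spanning structures; making this analysis airtight, and gluing it with an absorption argument that swallows the exceptional set $S$ and the erstwhile reservoir $D$, will be where the bulk of the work lies.
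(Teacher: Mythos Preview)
The statement you have been asked to prove is a \emph{conjecture} in the paper, not a theorem: it appears in the concluding remarks as Conjecture~\ref{ThreeCyclesConjecture}, is explicitly motivated as the author's belief about the optimal constant, and the paper offers no proof whatsoever. There is therefore nothing to compare your proposal against; you are attempting to settle an open problem, and what you have written is a research programme rather than a proof.

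As a research programme it identifies the natural pressure points, but each of your three cases currently rests on an unproven ``should''. In case~(i) you invoke Theorem~\ref{CycleMinDegreeConn} together with Theorem~\ref{Letzter} to cover $K_n\setminus S$ exactly, but you then need to absorb every vertex of $S$ into one of three already-fixed cycles of three prescribed colours; nothing in the structural lemma guarantees that a given $s\in S$ has the right coloured edges to the right cycles, and Theorem~\ref{Letzter} gives no control over where the blue and green cycles sit. In case~(iii) you assert that the rich two-colour structure lets you link ``on the fly'' without a reservoir, but the present proof uses $D$ precisely because linkable paths need spare vertices to be joined into cycles, and you have not indicated a mechanism that replaces this. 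In case~(ii) you yourself concede that a ``finite but delicate case analysis'' remains, and that this is ``where the bulk of the work lies''; that is an honest description of an open problem, not a proof. In short, your outline is a plausible plan of attack, but none of its steps is currently established, and the paper does not claim otherwise.
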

The motivation for this conjecture is that  for $r=3$ we do not believe there to be counterexamples to Conjecture~\ref{Erdos} which are substantially different to those constructed in \cite{PokrovskiyCycles}.  %This is because known counterexamples to Conjecture~\ref{Erdos} are based on first constructing graphs which cannot be partitioned into $r$ monochromatic paths without repeating colours.  However for $r=3$ we have a very good understanding of such graph given by a theorem in~\cite{PokrovskiyCycles}.

\subsubsection*{Partitioning high minimum degree graphs into monochromatic cycles}
The main application we gave of the theorems in this paper was to the area of partitioning coloured complete graphs into monochromatic cycles. For this application, our theorem was only one of the crucial ingredients. The other main ingredient we needed was Theorem~\ref{Letzter} about partitioning high minimum degree graphs into monochromatic subgraphs. It would interesting to better understand the number of monochromatic cycles to partition an $r$-coloured graph with minimum degree $\delta(G)$.

For $2$-coloured graphs, the minimum degree needed to get a partition by $2$ monochromatic cycles is well understood~\cite{Balogh,DeBiasioNelsen, LetzterMinDegree}. We make the following conjectures about how many cycles are needed in general for $2$ colours.
\begin{conjecture} \label{MinDegreeConjecture3}
The vertices of every sufficiently large $2$-edge coloured graph $G$ satisfying $\delta(G)> \frac{2}{3}|G|$ can be covered by $3$ disjoint monochromatic cycles.
\end{conjecture}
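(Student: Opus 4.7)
The plan mirrors the structure of the proof of Theorem~\ref{ThreeCycles}: combine Theorem~\ref{CycleMinDegreeConn} with a structural case analysis and a final 2-cycle partition result. The first step is to establish a minimum-degree analog of Lemma~\ref{CasesLemma}: every $2$-edge-coloured $G$ with $\delta(G) > \frac{2}{3}|G|$ contains a constant-size set $S$ such that $G\setminus S$ either has a colour class that is $k$-connected for a large absolute constant $k$, or has a specific bipartite-like split structure. This would follow by applying Lemma~\ref{Disconnecting} to each colour class and using the minimum degree of $G$ to constrain the resulting pieces.

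In the connected case, say red is $k$-connected on $G\setminus S$, I would apply Theorem~\ref{CycleMinDegreeConn} to obtain a red cycle $C_1$ with $\Delta(R\setminus C_1)\leq \frac{1}{k+1}|G\setminus C_1|+3$. If $|C_1|\leq \frac{1}{3}|G|$, then a short computation shows that the blue subgraph on $V(G)\setminus V(C_1)$ has minimum degree exceeding half its order, so Dirac's theorem produces a blue Hamilton cycle on the remainder and the partition is completed using just two cycles (leaving one ``spare''). In the ``bipartite-like split'' case, each piece would be handled using techniques analogous to those in Section~\ref{Section3Cycles}, finding monochromatic paths on each piece and gluing them into cycles using the strongly constrained edges across pieces.

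The main obstacle is the subcase where $|C_1|>\frac{1}{3}|G|$. Unlike the 3-colour setting of Theorem~\ref{ThreeCycles}, where the ambient $K_n$ guarantees that the complement of the red cycle is automatically dense in the remaining colours, here $G$ has missing edges that cannot be absorbed into any colour. Consequently, $\delta(G)>\frac{2}{3}|G|$ does not imply $\delta(G\setminus C_1)\geq \frac{3}{4}|G\setminus C_1|$, and Theorem~\ref{Letzter} does not apply directly to the residual graph. Overcoming this would require a new 2-cycle partition lemma, ideally showing that a $2$-edge-coloured graph $H$ with $\delta(H)>\frac{2}{3}|H|$ whose red colour class $R$ satisfies $\Delta(R)\leq \frac{1}{k+1}|H|+3$ can be partitioned into two vertex-disjoint monochromatic cycles, perhaps via an absorption or stability argument exploiting the very low red maximum degree on the residual graph. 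Developing this auxiliary lemma, together with the case analysis for the bipartite-split structures, is where I expect the bulk of the new technical work to lie.
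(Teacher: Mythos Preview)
The statement you are attempting to prove is Conjecture~\ref{MinDegreeConjecture3}, and the paper does \emph{not} prove it: it is presented in the concluding remarks as an open problem. There is therefore no ``paper's own proof'' to compare your proposal against. The paper only mentions, without details, that Allen, B{\"o}ttcher, Lang, Skokan, and Stein~\cite{ABLSS} prove an approximate version with the weaker hypothesis $\delta(G)\geq(2/3+o(1))|G|$.

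As for your plan itself, you have correctly identified the central difficulty rather than resolved it. Your Dirac argument in the case $|C_1|\leq \frac{1}{3}|G|$ does not work as stated: from $\delta(G)>\frac{2}{3}|G|$ and $|C_1|\leq\frac{1}{3}|G|$ you only get $\delta(G\setminus C_1)>\frac{1}{3}|G|\geq \frac{1}{2}|G\setminus C_1|$ in the \emph{whole} graph $G\setminus C_1$, not in its blue subgraph; the blue graph on the residual set need not satisfy Dirac's condition at all, since many of those edges could be red or missing. More importantly, you yourself acknowledge that the case $|C_1|>\frac{1}{3}|G|$ requires a new auxiliary lemma that is not in the paper and that you have not proved. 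So what you have written is a reasonable outline of where the difficulties lie in attacking an open conjecture, but it is not a proof, and the paper offers none either.
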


\begin{conjecture} \label{MinDegreeConjecture4}
The vertices of every sufficiently large $2$-edge coloured graph $G$ satisfying $\delta(G)> \frac{1}{2}|G|$ can be covered by $4$ disjoint monochromatic cycles.
\end{conjecture}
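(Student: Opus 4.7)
The plan is to combine Theorem~\ref{CycleMinDegreeConn} with a case analysis in the spirit of the proof of Theorem~\ref{ThreeCycles}. The starting point is a structural dichotomy that I expect any proof to need: either (a) one colour class, say red, contains a $k$-connected subgraph spanning all but $O(1)$ vertices of~$G$ for some suitably large constant~$k$, or (b) $G$ admits a coloured blow-up structure analogous to the $4$-partite configuration in Lemma~\ref{CasesLemma}, with each colour class decomposing into two bipartite-like blocks.

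In case~(a) I would apply Theorem~\ref{CycleMinDegreeConn} to the red colour class to extract a red cycle $C_1$ with red-maximum-degree on the residue at most $\tfrac{1}{k+1}|G\setminus C_1|+3$. Because $\delta(G)>\tfrac{1}{2}|G|$, the blue degree of every vertex of $G\setminus C_1$ inside $G\setminus C_1$ is then at least $\bigl(\tfrac{1}{2}-\tfrac{1}{k+1}\bigr)|G\setminus C_1|-O(1)$, which for $k$ large is within $o(|G\setminus C_1|)$ of the Dirac threshold. Combining a long-cycle theorem (the $n-o(n)$ bound of Bondy--Chv\'atal type, or a stability version of Dirac's theorem) with an absorbing-path construction should yield a blue cycle $C_2$ covering all but $O(1)$ vertices of $G\setminus C_1$. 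The remaining $O(1)$ uncovered vertices can be absorbed into at most two further (possibly trivial, i.e.\ single-edge or single-vertex) monochromatic cycles, giving a partition into $4$ cycles.

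In case~(b) I would mimic the treatment of Cases~(ii) and~(iii) of Lemma~\ref{CasesLemma} in the proof of Theorem~\ref{ThreeCycles}. The blow-up structure provides bipartite-like blocks in each colour, and the bipartite path-covering lemmas of Section~\ref{Section3Cycles} allow one to cover each block by a bounded number of monochromatic paths; absorbing reservoirs in the spirit of the sets $\Dr,\Db$ then link these paths into cycles. Four cycles should be the correct number because the extremal colourings look like blow-ups of $C_4$ in which each colour class is the union of two vertex-disjoint bipartite components, each forcing its own monochromatic cycle.

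The principal obstacle is establishing the structural dichotomy in the first place. Gy\'arf\'as-type results on large monochromatic components apply to edge-coloured complete graphs, but not directly to graphs with $\delta(G)$ only slightly above $\tfrac{1}{2}|G|$, where the non-edges of~$G$ interact non-trivially with the colour pattern. A stability argument, possibly via the regularity lemma on the reduced graph or via a direct extremal analysis of $2$-coloured graphs lacking a highly-connected monochromatic subgraph, will likely be required; once the dichotomy is in place, the rest of the proof should follow the blueprint of Sections~\ref{SectionConnected} and~\ref{Section3Cycles}.
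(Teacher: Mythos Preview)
The statement you are attempting to prove is Conjecture~\ref{MinDegreeConjecture4}, which the paper poses as an \emph{open problem} in Section~\ref{SectionConclusion}; there is no proof in the paper to compare your attempt against. The paper merely records the conjecture, exhibits colourings showing the degree threshold is essentially tight, and mentions related partial progress by Allen, B\"ottcher, Lang, Skokan, and Stein on the companion Conjecture~\ref{MinDegreeConjecture3}. So any correct proof you give would be new, not a reproduction of something in the paper.

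That said, your sketch has a concrete gap in case~(a) that would derail the argument well before the ``principal obstacle'' you identify. You claim that after removing the red cycle $C_1$ produced by Theorem~\ref{CycleMinDegreeConn}, every vertex of $G\setminus C_1$ has blue degree at least $\bigl(\tfrac{1}{2}-\tfrac{1}{k+1}\bigr)|G\setminus C_1|-O(1)$ inside $G\setminus C_1$. But the hypothesis $\delta(G)>\tfrac{1}{2}|G|$ controls degrees in $G$, not in $G\setminus C_1$: a vertex $v\in G\setminus C_1$ may have had almost all of its $\tfrac{1}{2}|G|$ neighbours on $C_1$, so that $d_{G\setminus C_1}(v)$ is as small as $\tfrac{1}{2}|G|-|C_1|$. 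When $|C_1|$ is comparable to $|G|$ (and Theorem~\ref{CycleMinDegreeConn} gives no upper bound on $|C_1|$), this quantity can be far below any useful threshold, and the subsequent appeal to a near-Dirac theorem on the blue graph collapses. The bound from Theorem~\ref{CycleMinDegreeConn} only controls the \emph{red} degree inside $G\setminus C_1$; it says nothing about how many total neighbours were lost to $C_1$. You would need an additional mechanism---either a way to force $C_1$ to be short, or a way to route $C_1$ so that it avoids too many neighbours of any fixed $v\notin C_1$---and neither is provided by the tools in the paper.

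Your case~(b) sketch is vaguer still, and the analogy with Lemma~\ref{CasesLemma} is strained: that lemma relies heavily on the host graph being complete, so that non-edges in one colour force edges in another. With $\delta(G)$ only just above $\tfrac12|G|$ there are up to $\tfrac12|G|$ genuine non-edges at each vertex, and the neat $4$-partite structure need not emerge. You correctly flag this as the main difficulty, but as written the proposal is a plan for a plan rather than a proof.
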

See Figure~\ref{figure4MinDegreeConjecture} for two graphs showing that the degree conditions in the above conjectures are essentially optimal. Notice that for $\epsilon>0$, the graph $K_{n,(1+\epsilon)n}$ has $\delta(G)\geq  (\frac{1}{2}-\epsilon)|G|$, and cannot be partitioned into less than $\epsilon |G|$ cycles (regardless whether the edges are coloured or not.) This shows that the degree threshold for partitioning graph into $k$ cycles has to be less than $\frac{1}{2}|G|-o(|G|)$ for any fixed $k$. Thus Conjecture~\ref{MinDegreeConjecture4} expresses the author's belief that the degree thresholds for partitioning $2$-coloured graphs into $k$ cycles should be roughly the same for $k\geq 4$.

There has already been some work on these conjectures.  Allen, B{\"o}ttcher,  Lang,  Skokan, and Stein~\cite{ABLSS} prove that for large $n$, any 2-edge-coloured graph $G$ on $n$ vertices and of
minimum degree $(2/3 +o(1))n$ can be partitioned into three
monochromatic cycles. It would also be interesting to understand how many monochromatic cycles are needed to partition $r$-edge-coloured graphs with high minimum degree. Here we don't even have a conjecture for what the best coloured graphs should be.

\begin{figure}
  \centering
    \includegraphics[width=0.6\textwidth]{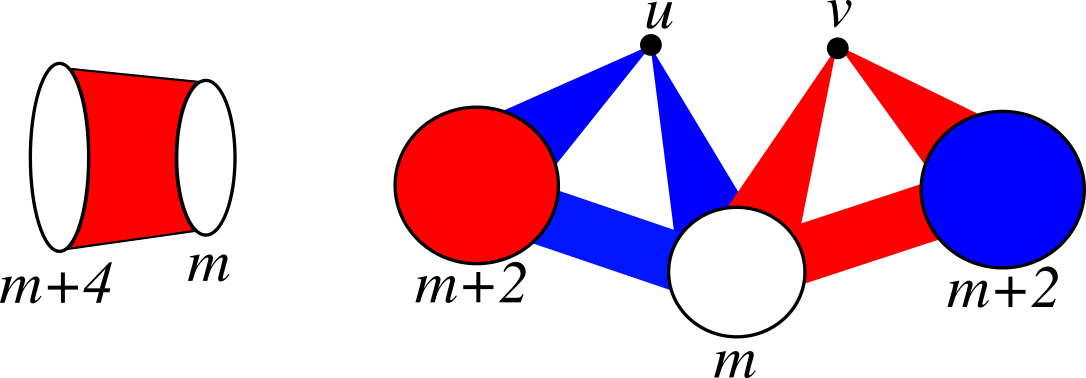}
  \caption{Coloured graphs showing that the bounds on the minimum degree in Conjectures~\ref{MinDegreeConjecture3} and~\ref{MinDegreeConjecture4} are essentially tight.} \label{figure4MinDegreeConjecture}
\end{figure}

\subsubsection*{Links to the Chv\'atal-Erd\H{o}s Theorem}
Let $\kappa(G)$ be the connectedness of a graph, and $\alpha(G)$ be the independence number of a graph.
Theorem~\ref{CycleMinDegreeConn}  shows that every graph has partition into a cycle $C$ and and induced subgraph $H$ of $G$ with $\Delta(H)\leq \frac{|H|}{\kappa(G)+1}+3$. By T\"uran's Theorem, graphs with small maximum degree cannot have very large independent sets. So if we applied Theorem~\ref{CycleMinDegreeConn} to a graph $G$ with small independence number, one would expect to obtain that the cycle $C$ is very long.

To make this precise, suppose that we have a graph $G$ with $(1+\epsilon)\alpha(G)\leq\kappa(G)+1$ for some $\epsilon>0$. Apply Theorem~\ref{CycleMinDegreeConn} to $G$  to get a partition into a cycle $C$ and and induced subgraph $H$ of $G$ with $\Delta(H)\leq \frac{|H|}{\kappa(G)+1}+3$. Rearranging  gives 
\begin{equation}\label{ChvatalErdosEq}
\kappa(G)+1\leq \frac{|H|+4(\kappa(G)+1)}{\Delta(H)+1}. 
\end{equation}
T\"uran's Theorem is equivalent to $\alpha(H)\geq \frac{|H|}{\partial(H)+1}$ where ``$\partial(H)$'' is the average degree of $H$. Using $\partial(H)\leq \Delta(H)$ and $\alpha(H)\leq \alpha(G)$ gives $\alpha(G)\geq \frac{|H|}{\Delta(H)+1}$. Combining this with (\ref{ChvatalErdosEq}) and $(1+\epsilon)\alpha(G)\leq\kappa(G)+1$ gives $|H|\leq 4(\kappa(G)+1)/\epsilon$. 

Thus we have shown that if $G$ is a graph with  $(1+\epsilon)\alpha(G)\leq\kappa(G)+1$, then $G$ has a cycle of length $\geq |G|-4(\kappa(G)+1)/\epsilon$. This isn't a very good result. By a theorem of Chvatal and Erd\H{o}s much more is true---such graphs are Hamiltonian.
\begin{theorem}[Chv\'atal and Erd\H{o}s, \cite{ChvatalErdos}]
Every graph $G$ with $\kappa(G)\geq \alpha(G)$ is Hamiltonian.
\end{theorem}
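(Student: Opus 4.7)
The plan is a proof by contradiction via the classical longest-cycle argument. Set $k=\kappa(G)$. If $\alpha(G)=1$ then $G$ is complete and trivially Hamiltonian, so I may assume $k\ge\alpha(G)\ge 2$; in particular $G$ is $2$-connected and contains a cycle. Let $C$ be a longest cycle in $G$, fix a clockwise orientation on it, and pick a vertex $v\notin V(C)$, which exists by the non-Hamiltonicity assumption.

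The first step is to invoke the fan version of Menger's theorem to produce $k$ paths $P_1,\dots,P_k$ from $v$ to $k$ distinct vertices $u_1,\dots,u_k\in V(C)$ that are pairwise internally disjoint and meet $V(C)$ only at their endpoints. The clean way to derive this from vertex Menger is to adjoin a new vertex $v^*$ adjacent to all of $V(C)$ and apply Menger between $v$ and $v^*$ in the resulting graph, which is still $k$-connected for this pair. (One also checks $|V(C)|\ge k$ using $k$-connectivity, which follows from the existence of the fan itself.)

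The main step is to show that $S=\{v,u_1^+,\dots,u_k^+\}$, where $u_i^+$ denotes the clockwise successor of $u_i$ on $C$, is independent in $G$. If the edge $vu_i^+$ existed, then replacing the arc $u_iu_i^+$ of $C$ by the detour from $u_i$ along $P_i$ reversed to $v$ and then across $vu_i^+$ would yield a cycle strictly longer than $C$, contradicting maximality. If the edge $u_i^+u_j^+$ existed (say with $u_j$ lying clockwise after $u_i^+$), take the two arcs of $C$ between $\{u_i,u_j\}$ and $\{u_i^+,u_j^+\}$, delete the edges $u_iu_i^+$ and $u_ju_j^+$, and splice the remaining pieces together using the new edge $u_i^+u_j^+$ together with $P_i$ and $P_j$ meeting at $v$; this produces a simple cycle of length $|C|-2+1+|P_i|+|P_j|\ge|C|+1$, again contradicting maximality of $C$. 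Hence $S$ is independent and $|S|=k+1>\alpha(G)$, contradicting the hypothesis.

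The main obstacle is the careful bookkeeping in the $u_i^+u_j^+$ subcase, namely verifying that the spliced cycle is simple. This reduces to the internal disjointness of the $P_i$'s from $C$ and from each other (guaranteed by Menger), together with the fact that the two arcs of $C$ used are edge-disjoint. A minor boundary case is $u_i^+=u_j$, when one of the arcs degenerates to a single vertex; there the argument still goes through because the edge $u_i^+u_j^+=u_ju_j^+$ already sits on $C$ and a direct detour through $P_i$ or $P_j$ produces the required lengthening. Once independence of $S$ is established, the contradiction with $\alpha(G)\le k$ is immediate.
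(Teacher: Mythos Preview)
Your argument is correct and is essentially the classical proof of Chv\'atal and Erd\H{o}s: take a longest cycle, fan out from an uncovered vertex via Menger, and show that the successors of the fan endpoints together with $v$ form an independent set of size $\kappa(G)+1$. The bookkeeping in the $u_i^+u_j^+$ subcase is fine, and your handling of the degenerate case $u_i^+=u_j$ is adequate.

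Note, however, that the paper does not actually prove this theorem. It is quoted in the concluding remarks as a known result of Chv\'atal and Erd\H{o}s, with a citation, purely to contextualise Theorem~\ref{CycleMinDegreeConn} and suggest a possible common generalisation. So there is no proof in the paper to compare against; your write-up simply supplies the standard argument from the original source.
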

The above discussion shows that perhaps there is some relationship between Theorem~\ref{CycleMinDegreeConn} and the Chvatal-Erd\H{o}s Theorem. It would be interesting to investigate whether there is a common generalization of both theorems.

\subsubsection*{An alternative proof of the Bessy-Thomass\'e Theorem}
Theorem~\ref{CycleMinDegree} can be used to give an alternative proof of the Bessy~Thomass\'e Theorem. To do this we first need to 
classify graphs $H$ which satisfy $\Delta(H)\leq \frac12(|H|-1)$ but whose complements are not Hamiltonian. This is equivalent to classifying non-Hamiltonian graphs $G$ which have $\Delta(G)\geq \frac12(|G|-1)$. We can use the following theorem to do this.
\begin{theorem} [Nash-Williams, \cite{NashWilliams}]\label{NashWilliams}
Let $G$ be a $2$-connected graph satisfying 
$$\delta(G)\geq \max\left(\frac{1}{3}(|G|+2), \alpha(G)\right).$$ Then $G$ is Hamiltonian.
\end{theorem}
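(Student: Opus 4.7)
The plan is to adapt the standard longest-cycle argument used in Chv\'atal--Erd\H{o}s-style Hamiltonicity proofs, making essential use of both halves of the hypothesis. Assume for contradiction that $G$ is not Hamiltonian, and let $C$ be a longest cycle of $G$ with a fixed cyclic orientation; write $c^+$ for the successor of $c\in V(C)$. Since $G$ is non-Hamiltonian, $G-C$ has a component $H$, and $2$-connectivity forces $|A|\geq 2$, where $A=N(H)\cap V(C)$.

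The key structural claim, proved by the usual cycle-extension argument, is that for any fixed $v\in V(H)$ the set $A^+\cup\{v\}$ is independent: if $a^+b^+$ were an edge for some $a,b\in A$, we could reroute $C$ through a path from $a$ to $b$ via $H$ (of length $\geq 2$) to produce a cycle longer than $C$; and any edge $va^+$ would allow a similar extension. This forces $\alpha(G)\geq |A|+1$, and combined with $\delta(G)\geq \alpha(G)$ this gives $|A|\leq \delta(G)-1$.

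Next I would extract two further inequalities from the minimum-degree condition. On the $H$ side, picking any $v\in V(H)$, all neighbours of $v$ lie in $V(H-v)\cup A$, so $|H|+|A|\geq \delta(G)+1$. On the $C$ side, fix $x\in A^+$; every neighbour of $x$ lies in $V(C)$, yet by independence of $A^+$ and a further rotation-type analysis one can show that $x$ has no neighbour lying in $A^+$ nor in a suitable set of predecessors of $A$. This yields a lower bound of the form $|C|\geq 2\delta(G)-(\text{small correction in }|A|)$, since the $\delta(G)$ neighbours of $x$ on $C$, together with their forbidden shadow on $C$, must occupy many vertices.

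Finally I would combine the three inequalities with the arithmetic hypothesis $\delta(G)\geq (|G|+2)/3$ to conclude
\[
|G|\;=\;|C|+|H|\;\geq\;\bigl(2\delta(G)-O(|A|)\bigr)+\bigl(\delta(G)+1-|A|\bigr)\;>\;|G|,
\]
a contradiction. The main obstacle is the cycle-length lower bound: one needs to extract enough non-neighbour gaps on $C$ around the attachment set $A$, and the constant $\tfrac{1}{3}$ in the hypothesis is precisely calibrated so that the three estimates just barely close. Executing the rotation argument cleanly, and handling corner cases where $A$ and $A^+$ interact nontrivially on $C$ (e.g.\ consecutive attachment vertices, or when $|H|=1$), is the delicate part of the proof; the Chv\'atal--Erd\H{o}s ingredient controls the number of attachment vertices, while the minimum-degree ingredient controls the arc lengths between them, and it is the balance between these two that drives the argument.
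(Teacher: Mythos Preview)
The paper does not prove this theorem at all: it is quoted as a known result of Nash-Williams, with a citation, and is then used as a black box to derive Corollary~\ref{stabledirac}. So there is no ``paper's own proof'' to compare your attempt against.

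As for your sketch itself, the overall strategy (longest cycle $C$, component $H$ of $G\setminus C$, attachment set $A=N(H)\cap V(C)$, independence of $A^+\cup\{v\}$, then counting) is indeed the standard route to Nash-Williams' theorem, and steps 1--3 are correct. However, the argument as written does not close. Two specific issues:

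\begin{itemize}
\item Your assertion that for $x\in A^+$ ``every neighbour of $x$ lies in $V(C)$'' is false in general: $x$ has no neighbour in $H$, but it may well have neighbours in other components of $G\setminus C$.
\item More seriously, the naive counting does not yield a contradiction. From $x\in A^+$ one gets only $\delta\le d(x)\le n-|A|-|H|-1$; combined with $|H|\ge \delta+1-|A|$ this gives $n\ge 2\delta+2$, which is perfectly compatible with $n\le 3\delta-2$. The extra ingredient needed is a second-level structural fact, for instance that any two distinct vertices of $A^+$ have no common neighbour outside $C$ (proved by another rerouting), or a sharper analysis of how $N(x)$ sits on the arcs of $C$ between attachment points. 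Only with such an additional constraint can the three inequalities be made to collide. You flag this step as ``the delicate part,'' and it is: without it the proof is a plausible outline but not a proof.
\end{itemize}
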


It is a simple exercise to use Theorem~\ref{NashWilliams} to classify non-Hamiltonian graphs $\Delta(G)\geq \frac12(|G|-1)$.
\begin{corollary}\label{stabledirac}
If $G$ is a graph satisfying $\delta(G)\geq \frac{1}{2}(|G|-1)$ then one of the following holds.
\begin{enumerate} [\normalfont(i)]
\item $V(G)$ can be partitioned into two sets $A$ and $B$ such that we have $|A| = |B|+1$, all the edges between $A$ and $B$ are present, and there are no edges within $A$.
\item There is a vertex $v\in G$ such that $V(G)-v$ can be partitioned into two sets $A$ and $B$ such that $|A|=|B|$, there are no edges between $A$ and $B$, and the subgraphs $G[A+v]$ and $G[B+v]$ are complete.
\item $G$ is Hamiltonian.
\end{enumerate}
\end{corollary}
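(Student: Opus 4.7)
The strategy is to classify $G$ by its connectivity and its independence number, landing in one of the three structural cases or invoking Theorem~\ref{NashWilliams}. First I would verify that $G$ must be connected: any component $C$ satisfies $|C|-1 \geq \delta(G) \geq \frac{1}{2}(|G|-1)$, so two components would force $|G| \geq 2 \cdot \frac{1}{2}(|G|+1) > |G|$.

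Next I would dispose of the case where $G$ has a cut vertex $v$. Let $C_1, \dots, C_k$ be the components of $G-v$. For every $w \in C_i$ we have $N(w) \subseteq C_i \cup \{v\}$, so $|C_i| \geq d(w) \geq \frac{1}{2}(|G|-1)$; combined with $\sum |C_i| = |G|-1$ this forces $k=2$ and $|C_1| = |C_2| = \frac{1}{2}(|G|-1)$. The equality in the degree bound then makes every vertex of $C_i \cup \{v\}$ adjacent to every other vertex of that set, so $G[C_i + v]$ is complete and we are in case (ii). If instead $G$ is 2-connected and $\alpha(G) > \delta(G)$, I would take a maximum independent set $I$ and set $J = V(G) \setminus I$. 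Since $N(v) \subseteq J$ for every $v \in I$, we get $|J| \geq \delta(G) \geq \frac{1}{2}(|G|-1)$, while $|I| \geq \alpha(G) > \delta(G) \geq \frac{1}{2}(|G|-1)$; integrality then pins down $|I| = \frac{1}{2}(|G|+1)$, $|J| = \frac{1}{2}(|G|-1)$, and every vertex of $I$ is adjacent to every vertex of $J$. This is exactly case (i) with $A = I$, $B = J$.

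In the remaining branch $G$ is 2-connected with $\delta(G) \geq \alpha(G)$. A short check gives $\frac{1}{2}(|G|-1) \geq \frac{1}{3}(|G|+2)$ for $|G| \geq 7$, so both hypotheses of Theorem~\ref{NashWilliams} are satisfied and $G$ is Hamiltonian, yielding case (iii). The only real obstacle is that the Nash-Williams inequality fails for small orders, so I would need to handle $|G| \leq 6$ separately. This is routine: $|G| \leq 2$ is immediate from the convention that a single vertex or empty set is a cycle; $|G| \in \{4,6\}$ is covered by Dirac's theorem because $\lceil (|G|-1)/2 \rceil = |G|/2$; the 2-connected subcase at $|G|=5$ has $\alpha(G) \leq \delta(G) = 2$ and is thus Hamiltonian by the Chv\'atal-Erd\H{o}s theorem; and $|G|=3$ reduces to inspecting $K_3$ (case (iii)) and $P_3$ (case (i) with $|A|=2$, $|B|=1$).
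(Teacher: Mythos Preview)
Your proof is correct and follows exactly the route the paper indicates: the paper does not spell out a proof of this corollary, merely stating that it is ``a simple exercise to use Theorem~\ref{NashWilliams}'' to obtain it, and your argument is precisely such an exercise. The connectivity split (cut vertex $\Rightarrow$ structure (ii), large independence number $\Rightarrow$ structure (i), otherwise Nash--Williams) is the natural one, and your handling of the small orders $|G|\leq 6$ via Dirac and Chv\'atal--Erd\H{o}s is clean. One cosmetic point: in the $\alpha(G)>\delta(G)$ branch you might note explicitly that $|G|$ must be odd (for $|G|$ even, integrality forces $\delta(G)\geq |G|/2$ and then $|I|>\delta(G)$ together with $|J|\geq\delta(G)$ already exceeds $|G|$), which is why ``integrality pins down $|I|=\tfrac12(|G|+1)$'' goes through; but the conclusion is sound either way.
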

Combining Theorem~\ref{CycleMinDegree} and Corollary~\ref{stabledirac} implies that the vertices of every graph $G$ have a partition into a cycle $C$ and a graph $H$ with $\overline H$ satisfying one of the structures (i) -- (iii) of Theorem~\ref{stabledirac}. When structures (i) or (ii) occur, we need quite a bit of extra work to obtain a partition into two cycles.  The full details can be found in~\cite{PokrovskiyAlternativeBessyThomasse}.

\subsubsection*{Graphs other than cycles}
A natural question is how the bounds in Theorems~\ref{CycleMinDegree} and~\ref{CycleMinDegreeConn} would change if we asked $C$ to be some graph other than a cycle.  
It is easy to see that the vertices of every graph can be partitioned into a matching and an independent set. This is the same as saying that every graph has a matching $M$ with $\Delta(G\setminus M)=0$.

If we replace $C$ by a path, then using Lemma~\ref{PathBipartite} it is easy to show that every graph has a path $P$ with $\Delta(G\setminus P)\leq |G|/2-1$.  To see that this is best possible notice that the graph formed from two disjoint copies of  $K_m$ has $\Delta(G\setminus P)\geq |G|/2-1$ for every path $P$. It would be interesting to find more analogues of Theorems~\ref{CycleMinDegree} and~\ref{CycleMinDegreeConn} when the cycle is replaced by other kinds of graphs.

\subsubsection*{Other degree conditions}
In this paper we investigated partitions of graphs into a cycle $C$ and an induced subgraph $H$ with small maxmimum degree. 
One of the motivations for researching this was to find a strengthening of the Bessy-Thomass\'e Theorem of the form ``the vertices of every graph can be partitioned into a cycle and an induced subgraph $H$ with $\overline{H}$ Hamiltonian for some \emph{natural} reason.''
The ``natural reason'' which we tried to achieve in this paper was the condition in Dirac's  Theorem. We failed to achieve this since the graph in Figure~\ref{FigureGeneralGraphExtremal} doesn't have a partition into a cycle and a complement of a Dirac graph. However there are many other natural conditions for Hamiltonicity which we could try and get $H$ to have. In particular, over the last decades there have been a huge number of generalizations of Dirac's Theorem proved. For each of these one could ask whether the appropriate strengthening of the Bessy-Thomass\'e Theorem is true.

One of the earliest and most famous generalizations of Dirac's Theorem is the following theorem of Ore.
\begin{theorem}[Ore~\cite{Ore}]\label{OreTheorem}
Let $G$ be a graph in which $d(u)+d(v)\geq n$ holds for any pair of nonadjacent vertices $u$ and $v$. Then $G$ is Hamiltonian.
\end{theorem}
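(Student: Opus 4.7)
My plan is to use a classical contradiction-and-extremality argument. I would suppose, for contradiction, that $G$ satisfies Ore's condition but has no Hamilton cycle; we may assume $n \geq 3$, since the paper's convention about single vertices and edges makes the statement trivial otherwise. First I would observe that adding an edge to $G$ cannot decrease any vertex degree, so the Ore condition $d(u)+d(v)\geq n$ is preserved (indeed only strengthened) under edge additions. I would therefore keep adding edges one at a time as long as the resulting graph remains non-Hamiltonian. Since $K_n$ is Hamiltonian, this process must stop at some edge-maximal non-Hamiltonian graph $G^*$ that still satisfies Ore's condition; by construction $G^* \neq K_n$ and $G^* + uv$ is Hamiltonian for every nonadjacent pair $u,v$ in $G^*$.

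Next I would fix any such nonadjacent pair $u,v$. Since a Hamilton cycle of $G^* + uv$ must use the new edge $uv$, deleting $uv$ leaves a Hamilton path $u = v_1, v_2, \ldots, v_n = v$ in $G^*$. The crux is then to introduce the two index sets
\begin{equation*}
S = \{i : 1 \leq i \leq n-1,\ v_1 v_{i+1} \in E(G^*)\}, \qquad T = \{i : 1 \leq i \leq n-1,\ v_i v_n \in E(G^*)\}.
\end{equation*}
By construction $|S| = d(v_1)$ and $|T| = d(v_n)$, while both sit inside $\{1, \ldots, n-1\}$. Ore's hypothesis applied to $v_1 = u$ and $v_n = v$ gives $|S|+|T| \geq n$, so the inclusion--exclusion bound $|S \cap T| \geq |S|+|T|-(n-1)$ forces $S \cap T \neq \emptyset$.

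Finally, for any $i \in S \cap T$, both edges $v_1 v_{i+1}$ and $v_i v_n$ belong to $G^*$, and the sequence
\begin{equation*}
v_1, v_{i+1}, v_{i+2}, \ldots, v_n, v_i, v_{i-1}, \ldots, v_2, v_1
\end{equation*}
is a Hamilton cycle in $G^*$, contradicting its choice. The only step that carries any real content is this rotation (turning the path $v_1 \ldots v_n$ into a cycle via the two diagonal edges $v_1 v_{i+1}$ and $v_i v_n$, which uses the fact that reversing the segment $v_2,\ldots,v_i$ still gives a valid path); the rest is a routine extremal reduction combined with a pigeonhole on the two degree sets, and I anticipate no serious obstacle.
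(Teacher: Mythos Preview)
Your proof is correct and is essentially the standard closure-plus-rotation argument for Ore's Theorem. However, the paper does not actually prove this statement: Theorem~\ref{OreTheorem} appears only in the concluding remarks as a classical result quoted from the literature (with the citation~\cite{Ore}) to motivate Conjecture~\ref{ConjectureOre}, and no proof is supplied. So there is nothing in the paper to compare your argument against; your write-up stands on its own as a valid proof of the cited theorem.
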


Notice that the graphs in Figure~\ref{FigureGeneralGraphExtremal} have a partition into a cycle (the edge $ux$), and an induced graph $H$ whose complement satisfies the assumptions of Theorem~\ref{OreTheorem}. We conjecture that such a partition exists for all $G$.
\begin{conjecture}\label{ConjectureOre}
Every graph has a cycle $C$ such that for any adjacent vertices $u,v\not\in C$  we have $d_{G\setminus C}(u)+d_{G\setminus C}(v)\leq |G\setminus C|-2$.
\end{conjecture}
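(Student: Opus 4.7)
The plan is to adapt the two-step strategy of Section~\ref{SectionCycleMinDegreeDisconnected}, replacing the maximum-degree bound $\Delta(H)\leq\tfrac12(|H|-1)$ throughout by the Ore-type bound
$$\sigma_2(H) := \max\{d_H(u)+d_H(v) : uv \in E(H)\} \leq |H|-2,$$
which is exactly the Ore condition on the complement $\overline{H}$.

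First I would prove an Ore analog of the path lemma Lemma~\ref{mainlemma}: provided $G$ is not in a short exceptional family and $A,B\subseteq V(G)$ are subsets of size roughly $\tfrac12|G|$, there is a path $P$ from a vertex of $A$ to a vertex of $B$ with $\sigma_2(G\setminus P)\leq |G\setminus P|-2$. The induction step is essentially copied from the proof of Lemma~\ref{mainlemma}: if $G$ has a vertex $x$ of degree at least $\tfrac12|G|$, delete $x$ (and, if $x\notin A\cup B$, also delete a suitable neighbour of $x$ in $A$), apply the inductive hypothesis to the remainder, and extend the resulting path through $x$; if no such $x$ exists then the maximum degree of $G$ is already so small that either a single vertex in $A\cap B$ or any edge between $A$ and $B$ provides a path whose removal leaves an Ore-sparse graph.

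The cycle statement would be deduced exactly as Theorem~\ref{CycleMinDegree} is deduced from Lemma~\ref{mainlemma}: pick a vertex $v$ of large degree, apply the path lemma inside $G-v$ with $A=B=N(v)$, and close the resulting path through $v$.

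The hard part will be classifying the exceptional graphs where the inductive step above breaks, and writing the corresponding base-case lemmas analogous to Lemmas~\ref{1vertex} and~\ref{2vertices}. In the max-degree setting the only obstruction is that the remainder has balanced components (Definition~\ref{DefinitionBalancedComponents}), since those are precisely the extremal graphs for $\Delta(H)\leq\tfrac12(|H|-1)$. For the Ore version the extremal family is strictly larger: stability-type descriptions of Ore's theorem (or, equivalently, Corollary~\ref{stabledirac} rephrased in terms of $\sigma_2$ on the complement) show that graphs $H$ with $\sigma_2(H)=|H|-1$ whose complements fail to be Hamiltonian fall into a short but more varied list, including balanced components, near-bipartite $K_{m,m+1}$-type structures, and graphs with one or two dominating vertices glued onto a balanced-components base. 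Each such configuration will need a tailored path construction in the style of Lemmas~\ref{1vertex} and~\ref{2vertices}, giving a longer and more intricate case analysis than the one in Section~\ref{SectionCycleMinDegreeDisconnected}; it is here, rather than in the inductive step itself, that the bulk of the work will lie.
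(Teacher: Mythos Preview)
The statement you are attempting is Conjecture~\ref{ConjectureOre}, and the paper does \emph{not} prove it: it appears in the concluding remarks as an open problem, with the comment that ``an approach like the one we used for Theorem~\ref{CycleMinDegree} can be used\dots\ However, given how much case analysis was needed for Theorem~\ref{CycleMinDegree}, extending it to classify all extremal cases could be quite tricky.'' So there is no proof in the paper to compare your proposal against.

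Your plan is exactly the strategy the paper hints at: rerun the inductive path lemma with the $\sigma_2$ bound in place of the $\Delta$ bound, then close into a cycle. You are also right that the inductive step goes through essentially verbatim and that the difficulty lies entirely in the base cases. But be aware that this is precisely where the paper declines to commit: the obstruction family for the Ore condition is genuinely larger than ``balanced components,'' and the paper's own Lemmas~\ref{1vertex} and~\ref{2vertices} already required a substantial case analysis for the simpler $\Delta$-version. Your proposal correctly identifies the shape of the argument and where the work lies, but it is a programme rather than a proof; whether the enlarged exceptional list can actually be handled by finitely many tailored path constructions is the open content of the conjecture, and nothing in your sketch (or in the paper) settles it.
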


As a more open ended question, we believe that graphs like those in Figure~\ref{FigureGeneralGraphExtremal} should be quite limited.
\begin{problem}
Classify all graphs $G$ which don't have a  cycle $C$ satisfying $\Delta(G\setminus C)\leq \frac12|G\setminus C|-1.$
\end{problem}
It is quite possible that an approach like the one we used for Theorem~\ref{CycleMinDegree} can be used to solve the above problem. However, given how much case analysis was needed for Theorem~\ref{CycleMinDegree}, extending it to classify all extremal cases could be quite tricky.

\bigskip\noindent
\textbf{Acknowledgment}

\smallskip\noindent
The author would like to thank his supervisors Jan van den Heuvel and Jozef Skokan for their advice and discussions. He would like to thank Pedro Vieira for suggesting that Conjecture~\ref{ConjectureOre} might be true.
This research was partly supported by the LSE postgraduate research studentship scheme and the Methods for Discrete Structures, Berlin graduate school (GRK 1408), and  SNSF grant 200021-149111.

\bibliography{pathpartition}
\bibliographystyle{abbrv}

\end{document}